\newcommand{\arXiv}[1]{\href{http://arxiv.org/abs/#1}{arXiv:#1}}
\newtheorem{theorem}{Theorem}
\newtheorem{proposition}[theorem]{Proposition}
\newtheorem{lemma}[theorem]{Lemma}
\newtheorem{corollary}[theorem]{Corollary}
\crefname{theorem}{Theorem}{Theorems}
\crefname{lemma}{Lemma}{Lemmas}
\crefname{proposition}{Proposition}{Propositions}
\crefname{corollary}{Corollary}{Corollaries}
\crefname{section}{Section}{Sections}
\crefname{figure}{Figure}{Figures}
\newcommand{\old}[1]{}
\newcommand{\Z}{{\mathbb Z}}
\newcommand{\R}{{\mathbb R}}
\renewcommand{\P}{{\mathbb P}}
\newcommand{\E}{{\mathbb E}}
\newcommand{\F}{{\mathcal F}}
\newcommand{\ind}{{\mathbf 1}}
\DeclareMathOperator{\tower}{tower}
\DeclareMathOperator{\esssup}{ess\; sup}
\DeclareMathOperator{\Var}{Var}
\DeclareMathOperator{\Cov}{Cov}
\DeclareMathOperator{\mmod}{mod}
\DeclareMathOperator{\rad}{rad}
\DeclareMathOperator{\diam}{diam}
\newcommand{\eps}{\varepsilon}
\newcommand{\tow}[4]{
{#1^{#2
\rule{0pt}{7pt}^{\iddots\rule{0pt}{9pt}
^{#3^{#4}}}}}
}
\newenvironment{ilist}{\begin{enumerate}[{\rm(i)}]}{\end{enumerate}}
\newcommand{\df}{\bf}
\begin{document}

\begin{frontmatter}
\title{Finitary Coloring}
\runtitle{Finitary Coloring}
\date{1 June 2015 (revised 20 April 2016)}
\let\MakeUppercase\relax 
\begin{aug}
\runauthor{Alexander E.~Holroyd, Oded Schramm, and David B.~\!Wilson}
\author{\href{http://research.microsoft.com/~holroyd}{Alexander E.~Holroyd}, \href{http://research.microsoft.com/~schramm}{Oded Schramm}, and \href{http://dbwilson.com}{David B.~\!Wilson}}
\address{Alexander E.~Holroyd, Microsoft Research,
One Microsoft Way, Redmond, WA 98052, USA}
\address{David B.~Wilson, Microsoft Research,
One Microsoft Way, Redmond, WA 98052, USA}
\affiliation{Microsoft Research}
\end{aug}

\setattribute{keyword}{AMS}{AMS 2010 subject classifications:}
\begin{keyword}[class=AMS]
\kwd{60G10} \kwd{05C15} \kwd{37A50}
\end{keyword}
\begin{keyword}
\kwd{Coloring, finitary factor, tower function, shift of finite type}
\end{keyword}


\begin{abstract}
Suppose that the vertices of~$\Z^d$ are assigned random
colors via a finitary factor of independent identically
distributed (iid) vertex-labels.  That is, the color of
vertex~$v$ is determined by a rule that examines the
labels within a finite (but random and perhaps unbounded)
distance~$R$ of~$v$, and the same rule applies at all
vertices.  We investigate the tail behavior of~$R$ if the
coloring is required to be proper (that is, if adjacent
vertices must receive different colors).  When $d\geq 2$,
the optimal tail is given by a power law for $3$ colors,
and a tower (iterated exponential) function for $4$ or
more colors (and also for $3$ or more colors when $d=1$).
If proper coloring is replaced with any shift of finite
type in dimension $1$, then, apart from trivial cases,
tower function behavior also applies.
\end{abstract}
\end{frontmatter}
\maketitle

\section{Introduction}
A {\df $q$-coloring} of~$\Z^d$ is a random element $X{=}(X_v)_{v\in\Z^d}$ of
$\{1,\ldots,q\}^{\Z^d}$ that assigns distinct colors
 to neighboring sites; that is, almost surely $X_u\neq X_v$ whenever
$|u-v|=1$, where $|\cdot|$ is the $1$-norm on $\Z^d$.  We say that $X$ is a
{\df factor} of an iid process if it can be expressed as $X=F(Y)$ for some
family of iid random variables $Y=(Y_v)_{v\in\Z^d}$ and some measurable map
$F$ that is translation-equivariant (i.e.\ that commutes with the action of
every translation of~$\Z^d$).  We say that $X$ is a {\df finitary} factor of
an iid process, or simply that $X$ is {\df ffiid}, if furthermore, for almost
every $y$ (with respect to the law of~$Y$) there exists $r<\infty$ such that
whenever $y'$ agrees with $y$ on the ball $B(r)\colonequals\{v\in\Z^d:
|v|\leq r\}$, the resulting values assigned to the origin $0\in\Z^d$  agree,
i.e.\ $F(y')_0=F(y)_0$. In that case we write $R(y)$ for the minimum such
$r$, and we call the random variable $R=R(Y)$ the {\df coding radius} of the
factor.  In other words, in an ffiid coloring, the color at the origin can be
determined by examining the iid variables within distance given by the coding
radius (which is a finite but perhaps unbounded random variable).

We focus on the questions: for which $q$ and $d$ does an ffiid $q$-coloring
of~$\Z^d$ exist, and what can be said about the tail behavior of its coding
radius?  As a motivating example before stating our main results, we briefly
describe a simple construction of an ffiid $4$-coloring of~$\Z^2$ whose
coding radius has exponential tail decay; see \cref{example} for an
illustration.  Let $(B_v)_{v\in\Z^2}$ be iid labels taking values $+$ and $-$
with equal probabilities. Since the critical probability for site percolation
is greater than $\tfrac12$, almost surely all $(+)$-clusters and
$(-)$-clusters are finite.  Next we color each $(+)$-cluster with colors $1$
and $2$ in a checkerboard pattern. To ensure translation-equivariance, the
phase of the checkerboard must be chosen locally.  Here is one way to do
this. Assign color $1$ to the lexicographically largest site $w$ in the
$(+)$-cluster, and also to all other sites $v$ in the cluster for which the
sum of the coordinates of~$w-v$ is even; assign the remaining sites in the
cluster color $2$. Checkerboard the $(-)$-clusters with colors $3$ and $4$ in
the same manner. The resulting $4$-coloring is ffiid.  To determine the color
of the origin we must examine the labels~$B_v$ in its cluster and its
boundary.  Since the radius of the cluster has exponential tails, so does the
coding radius.
\begin{figure}
\includegraphics[width=0.5\textwidth]{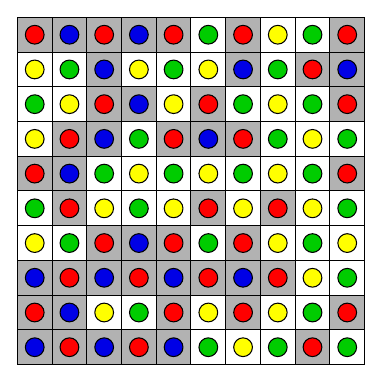}
\caption{An ffiid $4$-coloring of~$\Z^2$ whose coding radius has exponential tails.
Each (subcritical) site percolation cluster is assigned a checkerboard coloring.}\label{example}
\end{figure}

In fact, much faster decay than exponential is possible in many cases, while
only a power law is possible in others. For a non-negative integer~$r$,
define the {\df tower function} by $\tower(r)\colonequals \exp^r 1 =\exp \cdots \exp 1$,
where the exponential is iterated $r$ times.  For convenience we also write
$\tower(r)\colonequals\tower\lfloor r\rfloor$ for $r\in\R^+$.  Here are our main
results.
\begin{samepage}
\begin{theorem}[Tower function coloring] \label{tower}
Let $d=1$ and $q\geq 3$, or let $d\geq 2$ and $q\geq 4$.
There exist positive constants $c$ and $C$ depending on $q$ and $d$ such that the following hold.
\begin{ilist}
\item There exists an ffiid $q$-coloring of~$\Z^d$ whose
    coding radius satisfies
\[\P(R>r)< 1/\tower(cr),\qquad\forall r\geq 0\,.\]
\item Every ffiid $q$-coloring of~$\Z^d$ satisfies
\[\P(R>r)> 1/\tower(Cr),\qquad\forall r\geq 0\,.\]
\end{ilist}
\end{theorem}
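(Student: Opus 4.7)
I would build the coloring by a hierarchical multiscale construction. Fix a sequence of length scales $L_0<L_1<\cdots$ with $L_{k+1}\asymp\exp(L_k)$, so $L_k\asymp\tower(k)$. From the iid labels $Y$, extract independent streams $Y^{(0)},Y^{(1)},\ldots$, one per level (for instance, by encoding each $Y_v$ as an infinite binary expansion and distributing the bits). At level $k$, use $Y^{(k)}$ to build a translation-equivariant partition of $\Z^d$ into cells of typical diameter $\asymp L_k$, in such a way that the diameter of the level-$k$ cell containing the origin has a tail of order $\exp(-L_k)$; a standard way is to iteratively ``clean up'' the level-$(k-1)$ partition using fresh randomness. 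Within each coarsest cell assign some deterministic valid proper coloring based on the local seed, and then at each finer level re-color a thin buffer between neighboring cells to stitch the two local colorings into a globally proper one. The key combinatorial point is that we have at least two colors to spare beyond the chromatic number (we have $q\ge 4$ while $\chi(\Z^d)=2$ for $d\ge 2$, and $q\ge 3$ while $\chi(\Z)=2$), which makes the boundary reconciliation always feasible. The coding radius at the origin exceeds $r$ only if some level $k$ with $L_k\gtrsim r$ is reached, and by construction this has probability at most $1/\tower(cr)$.

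\textbf{Lower bound, part (ii).} Here I would argue in the reverse direction, showing that no ffiid proper $q$-coloring can have a tail better than tower. The rough plan is a recursive entropy/counting estimate. Put $f(r)=\P(R>r)$ and suppose for contradiction that $f$ decays much faster than any tower at infinity. For a box $\Lambda$ of side $L$, the restriction $X|_\Lambda$ coincides, outside an event of probability at most $|\Lambda|\,f(r)$, with a deterministic function of the iid labels in the $r$-enlargement of $\Lambda$. On the other hand, the set of proper $q$-colorings of $\Lambda$ has cardinality $e^{\Theta(L^d)}$, so $X|_\Lambda$ carries entropy $\Theta(L^d)$ worth of ``decisions'' that must be read off from these labels. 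Choosing $L$ and $r$ to balance these two estimates and then iterating the comparison across many scales propagates a logarithm at each step, producing the tower function lower bound $f(r)\ge 1/\tower(Cr)$.

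\textbf{Main obstacle.} The construction in (i) is a fairly standard multiscale recipe; its delicate aspect is making the buffer-stitching step simultaneously translation-equivariant, proper, and close enough to a function of a bounded coding window to carry the inductive tail bound from one scale to the next. The genuinely hard direction is (ii): the argument has to be sharp enough to distinguish, say, a doubly exponential from a triply exponential tail, and needs to use the full force of the iterated logarithm structure. I expect this to require a careful recursive step in which ``coding inside a box of size $L$'' is reduced to ``coding inside a box of size $\log L$,'' with a quantitative loss that, iterated $k$ times, rules out tails better than $1/\tower(Ck)$.
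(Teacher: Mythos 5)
Both halves of your sketch diverge substantially from what actually works, and each has a concrete gap. For part~(i), examine what tail your hierarchy actually delivers. With scales $L_k\asymp\tower(k)$ and a per-level failure probability of order $e^{-L_k}$, the event $\{R>r\}$ with $r\asymp L_k$ has probability $\asymp e^{-L_k}=e^{-r}$, i.e.\ only an \emph{exponential} tail --- not $1/\tower(cr)$, which at radius $r$ is a tower of height $\asymp r$, vastly smaller than $e^{-r}$. To reach a genuine tower bound you need a mechanism in which, at block radius $k$, the \emph{failure probability itself} is of order $1/\tower(ck)$, not merely exponentially small in the scale. The paper achieves this with Linial's color-reduction scheme (\cref{sperner,tower-block}): start with $n_k\asymp\tower(ck)$ iid colors and, in $k$ radius-$1$ rounds using the set-system lemma, reduce to $\Delta+1$ colors while only failing with probability $\Delta/n_k$; then stitch the resulting ``almost colorings'' across $k$ as in \cref{graph}. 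The further reduction from $2d+1$ to $4$ colors for $d\ge 2$ is carried out by a carefully chosen random box decomposition (faces separated in each direction, via \cref{slabs}) followed by checkerboarding, not by a generic buffer-reconciliation: indeed, arbitrary boundary colorings need not extend properly inward, which is precisely the obstruction that drives the height-function argument elsewhere in the paper.

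For part~(ii), the entropy-counting plan does not get off the ground: the iid labels in any $r$-enlargement of a box already carry \emph{unbounded} Shannon entropy (they can be uniform on $[0,1]$), so there is no entropy bottleneck between the source and the $e^{\Theta(L^d)}$ proper colorings --- even $r=1$ supplies infinitely many bits per site. Notice also that your count $e^{\Theta(L^d)}$ applies equally to shifts of finite type that \emph{do} admit block-factor realizations (e.g.\ an SFT containing a constant sequence), so an argument based only on this count would prove too much. The actual mechanism is the collision-probability recursion of \cref{min}: for any window function $f:B^r\to[q]$ one shows $\P[f(U_1,\dots,U_r)=f(U_2,\dots,U_{r+1})]\ge\delta(r,q)$, with $\delta(r,q)\ge\delta(r-1,2^q)^2/(4q)$, the $2^q$ arising because the induction passes to the set-valued function $S(u_1,\dots,u_{r-1})=\{a\in[q]:\P[f(u_1,\dots,u_{r-1},U_r)=a]\ge\eps\}$. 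Iterating $q\mapsto 2^q$ is exactly what generates the tower. The theorem then follows by truncating $R$ at $r$ to obtain a block factor $X'$ and applying this bound on the axis $\Z\times\{0\}^{d-1}$: a collision $X'_0=X'_1$ forces both coding radii to exceed $r$. Your instinct that a $\log$ must be propagated at each step of some recursion is right, but the recursion is over the number of symbols and block radius, not over the spatial side length.
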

\end{samepage}
\begin{theorem}[Power law $3$-coloring] \label{power}
Let $d\geq 2$.
\begin{ilist}
\item There exists a positive constant $\alpha$ (depending on $d$) and an ffiid $3$-coloring of~$\Z^d$ whose
    whose coding radius satisfies
\[\P(R>r)< r^{-\alpha},\qquad\forall r\geq 0\,.\]
\item Every ffiid $3$-coloring of~$\Z^d$ satisfies
\[\E (R^2) =\infty\,.\]
\end{ilist}
\end{theorem}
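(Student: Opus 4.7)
\emph{Proof plan.}

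For part~(i), I would construct the ffiid $3$-coloring via a hierarchical procedure. Start from a deterministic periodic proper $3$-coloring of $\Z^d$; at each scale $L_k = 2^k$, use iid block-local randomness to select from a finite menu of local modifications within each $L_k$-block, and use the third color as a ``seam'' to repair any induced properness violations at block boundaries. Coupled across scales, this yields an ffiid coloring whose coding radius at the origin is governed by the largest scale containing it in a repair zone, with polynomial tail $\P(R>L_k)\leq L_k^{-\alpha}$ by design.

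For part~(ii) I would argue by contradiction using a height-function representation. A case check on the proper $3$-colorings of a $4$-cycle verifies that around every unit-square face of $\Z^d$ the four $\pm 1$ height increments sum to zero; since $\Z^d$ is simply connected, the recipe $X_v\equiv h_v\pmod 3$ with $h_v-h_u\in\{\pm 1\}$ on each edge defines a height function $h:\Z^d\to\Z$, unique up to an additive integer. Assume for contradiction that $\E(R^2)<\infty$; the standard ffiid covariance bound then gives $|\Cov(f(X)_u,g(X)_v)| = O(\P(R>|u-v|/2)) = o(|u-v|^{-2})$ for bounded local functions $f,g$, so covariances are summable with a first-moment weight.

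Specialize to $d=2$ for concreteness. Set $A:=h_{(N,0)}-h_{(0,0)}$ and $B:=h_{(N,M)}-h_{(0,M)}$. Since $A-B$ equals the sum of $\pm 1$ increments along two vertical paths of total length $2M$, we have $|A-B|\leq 2M$, so $\Var(A-B)\leq 4M^2$ and hence $\Cov(A,B)\geq \Var(A)-2M^2$. Expanding $\Cov(A,B) = \sum_{i,j=0}^{N-1}\Cov(\epsilon_i^{(0)},\epsilon_j^{(M)})$ as a double sum over horizontal edge-signs and bounding each term by $O(\P(R>\sqrt{(i-j)^2+M^2}/2))$, the hypothesis $\E(R^2)<\infty$ yields $|\Cov(A,B)|\leq CN/M$. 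Combining and optimizing over $M$ forces $\Var(A)\leq C'N^{2/3}$.

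The main obstacle is to contradict this sublinear upper bound using the structural rigidity of proper $3$-colorings. I would try to show that in any ergodic translation-invariant ffiid $3$-coloring of $\Z^d$ with $d\geq 2$ the variance $\Var(A)$ must grow strictly faster than $N^{2/3}$, by a separability/rigidity argument: a too-flat height function would force parallel horizontal edge-sign sequences to coincide almost surely, producing a separable decomposition $h_{(x,y)}=f(x)+g(y)$ that is incompatible with $\Z^d$-ergodicity of a nontrivial finitary factor of iid. Making this rigidity lower bound quantitative, and extending it through all $d\geq 2$, is the crux of the proof, and is plausibly where the full strength of the $3$-color constraint (absent for $q\geq 4$) is genuinely used.
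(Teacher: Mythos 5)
For part~(ii) you correctly identify the central structural input (the height function: $\pm 1$ edge increments summing to zero around each unit square), and you correctly derive a quantitative link between the coding radius and the covariance of distant edge increments. However, you yourself flag the gap: you end up needing a lower bound of the form $\Var(A)\gg N^{2/3}$ for the height change across a segment of length $N$, and you do not supply it. The paper sidesteps this entirely with a better identity. Instead of comparing two far-apart parallel paths, it writes $h(v_0,v_n)$ once as a sum along the axis and once as a sum around three sides of an $n\times n$ square, and computes $\Var\, h(v_0,v_n)=\Cov[\,\cdot\,,\,\cdot\,]$ of the \emph{two} representations. This yields a \emph{uniformly bounded} variance, not merely sublinear: $\Var\, h(v_0,v_n)\le 4\sum_r r\rho(r)$. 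The paper then closes with a dichotomy lemma (\cref{variance}): a stationary $\pm1$-valued right-tail-trivial process either has unbounded variance of its partial sums or is a.s.\ deterministic. A bounded variance thus forces the axis restriction to be the alternating $\pm1$ sequence, i.e.\ a $3$-periodic coloring of the axis, contradicting tail-triviality. That dichotomy lemma (proved via a strictly convex functional $\phi_j$ on $L^2$ whose minimizers $X_j$ satisfy $X_j+S_j^k=X_k$) is precisely the ``rigidity'' ingredient that your outline recognizes as missing. So your approach to (ii) is in the right spirit but is genuinely incomplete at its crux; the paper's route both strengthens the variance estimate and supplies the rigidity lemma.

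For part~(i) your hierarchical ``periodic base plus scale-$L_k$ local modifications, with the third color as a seam'' is too vague and, as stated, looks unrepairable. The exact obstruction you use in (ii) works against you here: proper $3$-colorings of $\Z^d$ carry a height function, and two large blocks colored independently will typically have height functions at their common boundary differing by more than a Lipschitz-$1$ amount, so no local ``seam'' can reconcile them --- properness at the interface is a much weaker requirement than height compatibility, and you only address the former. This is exactly why the paper's construction is as careful as it is: for $d=2$ it uses critical bond percolation and its dual so that clusters nest in a tree (via surrounding circuits), assigning a single color per cluster; for general $d\ge2$ it builds a recursive tile partition with a tree structure (\cref{tiling}), assigns each tile a \emph{checkerboard} $2$-coloring (flat in height), and makes adjacent tiles' checkerboards differ only by a single step in the hexagon graph $\mathcal{Q}$ of checkerboard colorings --- a graph homomorphism from the tile-adjacency forest into $\mathcal{Q}$. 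The power-law tail then comes from the probability that a special ancestor tile is found within $O(1)$ generations (RSW for $d=2$; an explicit geometric scheme for general $d$). Any correct construction of an ffiid $3$-coloring has to respect the height constraint across block boundaries, and your outline currently does not engage with it.
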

Since it is easy to see that no ffiid $2$-coloring of
$\Z^d$ is possible for any $d\geq 1$, \cref{tower,power}
determine the functional form (up to the various constants)
of the optimal tail decay of the coding radius for all~$q$
and $d$. Our proofs in principle yield explicit bounds on
the constants $c$, $C$ and $\alpha$, but $c$ and $C$ are
very far apart in most cases, while $\alpha$ is much
smaller than $2$.

\subsection*{Isometry equivariance}

We will prove that the colorings in the (i) parts of both
theorems can be chosen to have the stronger property that
the map $F$ from the iid variables to the coloring is
equivariant under all \textit{isometries\/} of~$\Z^d$. To
motivate this distinction, note that the percolation-based
construction of the $4$-coloring described above is not
isometry-equivariant, because using the lexicographic
ordering of~$\Z^2$ breaks rotation and reflection symmetry.
However, the construction can be modified as follows. Take
$(U_v)_{v\in\Z^2}$ iid uniform on $[0,1]$ and independent
of $(B_v)_{v\in \Z^2}$, and assign color $1$ to the site
$w$ in a $(+)$-cluster with the largest $U_w$, and to all
other sites of the same parity in the cluster (and
similarly for $(-)$-clusters). The resulting process is an
isometry-equivariant factor of the iid variables
$Y_v\colonequals(B_v,U_v)$, with the same coding radius as before.

\subsection*{Shifts of finite type}

Next we consider some generalizations, focussing on the
case $d=1$. Coloring is a special case of the more general
notion of a shift of finite type, in which the requirement
that adjacent colors differ is replaced with arbitrary
local constraints.  Write $[q]\colonequals\{1,\ldots,q\}$.
We call elements of $[q]^{\Z^d}$ {\df configurations}.  Let
$d=1$.  A {\df shift of finite type} is a (deterministic)
set of configurations $S$ characterized by an integer $k$
and a set $W\subseteq [q]^k$ of allowed local patterns as
follows:
\[S=S(q,k,W)\colonequals
\Bigl\{ x\in [q]^\Z: (x_{i+1},\ldots,x_{i+k})\in W\;\forall i\in \Z\Bigr\}\,.\]

We want to exclude a certain uninteresting case.  For $w\in
W$, let $T(w)$ be the set of times at which the pattern $w$
can recur, i.e.\ the set of~$t\geq 1$ for which there
exists $x\in S$ with $(x_1,\ldots,x_k)$ and
$(x_{t+1},\ldots,x_{t+k})$ both equal to $w$.  We call the
shift of finite type {\df non-lattice} if there exists
$w\in W$ for which $T(w)$ has greatest common divisor $1$
(and otherwise it is \textbf{lattice}).
If $S$ is non-lattice, then necessarily $S\neq\varnothing$.
For example, the set of all deterministic $q$-colorings
of~$\Z$ is a shift of finite type, and is non-lattice if
and only if $q\geq 3$.

\begin{theorem}[Shifts of finite type]\label{sft}
Let $S$ be a shift of finite type on~$\Z$.
\begin{ilist}
\item If $S$ is non-lattice then there exists an ffiid process $X$ such that
    $X\in S$ a.s., with coding radius $R$ satisfying \[\P(R>r)\leq
    1/\tower(cr),
    \qquad\forall r>0.\]
\item If $S$ contains no constant
    configuration
    $(\cdots aaa \cdots)$, then for any ffiid process $X$ such that $X\in S$ a.s.,
    the coding radius satisfies
    \[\P(R>r)\geq 1/\tower(Cr), \qquad\forall r>0.\]
\end{ilist}
Here $c,C$ are constants in $(0,\infty)$ that depend on $S$.
\end{theorem}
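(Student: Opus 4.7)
I will reduce to Theorem~\ref{tower}(i) with $d=1$, $q=3$, exploiting the non-lattice hypothesis combinatorially. Since $S$ is non-lattice, choose $w\in W$ with $\gcd T(w)=1$ and finite $t_1,\ldots,t_m\in T(w)$ with $\gcd(t_1,\ldots,t_m)=1$. By the Frobenius--Schur coin-problem theorem there exists $N_0$ such that every integer $n\geq N_0$ can be written as a non-negative integer combination $n=\sum_j a_j t_j$. Concatenating the corresponding $w$-to-$w$ return paths yields, for each such $n$, a deterministic $S$-valid word $W_n$ of length $n+k$ whose first and last $k$ symbols equal $w$.

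Next I build an ffiid partition of $\Z$ into intervals of minimum length $N_0$ whose lengths have tower-decay tails. The hierarchical block-renewal scheme employed to prove Theorem~\ref{tower}(i) in the case $d=1$, $q=3$ provides precisely such a partition (if necessary after a translation-equivariant merging step to enforce the minimum length). Placing $w$ at each boundary of this partition and filling each gap by the corresponding Schur word $W_n$ produces an $S$-valued ffiid process; its coding radius is bounded by the length of the block containing the origin, and hence inherits tower-decay tails.

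\textbf{Part (ii).} Let $X$ be any ffiid process in $S$ with coding radius $R$. Because $S$ is closed in $[q]^\Z$ and contains no constant configuration, a standard compactness argument yields a finite $L$ such that no $x\in S$ contains $L+1$ consecutive copies of any single symbol.

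The plan is to adapt the tower lower bound argument for Theorem~\ref{tower}(ii), which is itself the special case in which $S$ is the proper $q$-coloring SFT on~$\Z$. That argument proceeds by a multi-scale combinatorial cascade: if $\P(R>r)$ were to decay substantially faster than $1/\tower(Cr)$, then one could chain approximate-independence events across tower-fast-growing scales --- each intermediate step being correct up to an error of order $\P(R>r_k)$, since the colorings of two disjoint intervals of length $\sim r_k$ separated by buffers of width $\sim r_k$ then depend only on disjoint iid neighborhoods --- and realize with positive probability a pattern that the SFT forbids. For the special case of proper coloring, the forbidden pattern is a pair of equal adjacent colors. For the present general no-constant setting, the forbidden pattern is any monochromatic run of length $L+1$, and the cascade can be organized around forcing such a run by merging independent monochromatic sub-blocks across levels. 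The principal technical obstacle will be to verify that the cascade of Theorem~\ref{tower}(ii) carries over with only the weaker no-constant rigidity in place of the proper-coloring-specific constraint; I expect this to go through because the cascade ultimately depends only on the existence of \emph{some} finite forbidden pattern in the SFT, which the no-constant hypothesis supplies.
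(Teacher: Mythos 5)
Your approach is essentially the paper's: use the Frobenius--Schur argument to manufacture return words of every sufficiently large length, build a tower-ffiid renewal structure on $\Z$, and splice the words in. The paper packages the renewal structure as a tower-ffiid $m$-net (Corollary 9), which has the convenient feature that consecutive $1$'s are at distance in $[m+1,2m+1]$, so the splicing is a bona fide \emph{block} factor of the net (only finitely many gap lengths arise) and Lemma 11 applies directly. Your formulation allows unbounded gaps and then bounds the coding radius by the gap length; this can be made to work, but you should be explicit that the coding radius must absorb both the distance to the nearest renewal marks \emph{and} the radius needed to locate them, and check both have tower tails. Using the $m$-net avoids this bookkeeping.

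\textbf{Part (ii).} Here there is a genuine gap. Your compactness observation is correct: no constant configuration in $S$ implies a bound $L$ on monochromatic runs. But your plan to ``adapt the cascade'' of the coloring lower bound to forbid a run of length $L+1$ is not supported by what Proposition 5 actually proves. Proposition 5 gives a lower bound on the probability that \emph{two adjacent} window-functions collide, $\P[f(U_1,\dots,U_r)=f(U_2,\dots,U_{r+1})]$; it says nothing about the probability of $L+1$ consecutive collisions, and the inductive mechanism there (shrink the window by one, blow up the alphabet from $q$ to $2^q$) does not straightforwardly yield such a bound. You would have to prove a new, stronger proposition, which you have not done, and the description of the argument as a ``multi-scale cascade across tower-growing scales'' does not accurately describe the actual induction in Proposition 5. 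The paper sidesteps all of this with a one-line reduction: set $W_i:=(X_{i+1},\dots,X_{i+k})$. The hypothesis that $S$ has no constant configuration is equivalent to $G_S$ having no self-loop, which is exactly the statement that $W_i\neq W_{i+1}$ a.s., so $W$ is a $q^k$-coloring of $\Z$ and is a bounded-radius block factor of $X$; Theorem 1(ii) applied to $W$ then transfers back to $X$ with constants shifted by $k$. This reduction is what turns your ``there is some finite forbidden pattern'' intuition into a proof, and is the step your proposal is missing.
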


It is easily seen that for any \textit{lattice\/} shift of
finite type $S$, no ffiid process belongs a.s.\ to $S$.
Indeed, no \textit{mixing\/} process belongs to $S$ (see
\cref{no-mixing}). On the other hand, a constant
configuration is trivially an ffiid process with $R\equiv
0$.  Together with these obervations, \cref{sft} thus
covers all cases for $d=1$.

The concept of a shift of finite type extends in the
obvious way to $\Z^d$ (by requiring that the configuration
restricted to every ball of radius~$k$ lies in some fixed
set $W$).  For $d\geq 2$ we do not know what possible
restrictions on the coding radius can be imposed by the
requirement that an ffiid process belong to a given shift of
finite type, besides the possibilities already seen: tower
functions (e.g.\ $4$-coloring), power laws (e.g.\
$3$-coloring), and the two trivial cases of constant
sequences and lattice shifts of finite type.

\subsection*{Finite dependence}

Closely related to ffiid processes is the notion of
$k$-dependence. A process $X=(X_v)_{v\in\Z^d}$ on $\Z^d$ is
called {\df $k$-dependent} if $(X_v)_{v\in A}$ is
independent of $(X_v)_{v\in B}$ for any subsets
$A,B\subseteq \Z^d$ that satisfy $|u-v|>k$ for all $u\in A$
and $v\in B$.  A process is {\df finitely dependent} if it
is $k$-dependent for some $k$. A process $X$ is {\df
stationary} if $(X_{v+u})_{v\in \Z^d}$ and $(X_v)_{v\in
\Z^d}$ are equal in law for all $u$.  On the other hand,
$X$ is a {\df block factor} (of an iid process) if it is an
ffiid process with \textit{bounded\/} coding radius. When $d=1$ we say
that $X$ is a {\df $k$-block factor} if there exists iid
$Y$ and a fixed measurable function $g$ of~$k$ variables
such that $X_i=g(Y_{i+1},\ldots, Y_{i+k})$ a.s.\ for all
$i\in\Z$.

Clearly, any $k$-block factor on $\Z$ is stationary and
$(k-1)$-dependent. Much less obviously, the converse is
false.  This was an open question for some time (see e.g.\
\cite{janson-84}); the first counterexample appeared in
 \cite{aaronson-gilat-keane-devalk}.  Furthermore,
there exist $1$-dependent stationary processes that are not
$k$-block factors for any $k$; see
\cite{burton-goulet-meester}.  See e.g.\ \cite{hl} for more
on the history of this question, which apparently has its
origins in \cite{ibragimov-linnik}.

Since \cref{tower} (ii) implies that no $k$-block factor $q$-coloring exists
for any $k$ and $q$, it is natural to ask whether there is a stationary
$k$-dependent $q$-coloring.  It is easily seen that the answer is no if $k=0$
or if $q=2$.  We also establish a negative answer in the first nontrivial
case: $k=1$ and $q=3$.
\begin{theorem}
\label{fin-dep} There is no stationary $1$-dependent
$3$-coloring of~$\Z$.
\end{theorem}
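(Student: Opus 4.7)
The plan is to assume for contradiction that a stationary $1$-dependent $3$-coloring $X=(X_i)_{i\in\Z}$ exists, and to derive a numerical impossibility among the joint distributions of $X_0,\ldots,X_4$. Write $p_i=\P(X_0=i)$, $a_{ij}=\P(X_0=i,X_1=j)$, and $b_{ijk}$, $c_{ijkl}$ for the three- and four-point probabilities of consecutive coordinates. One may assume $p_i>0$ for each $i$: otherwise $X$ is a $2$-coloring, and the coloring constraint forces $X_0=X_2$ a.s., so by $1$-dependence $\sum_i p_i^2=1$, forcing some $p_i=1$ and contradicting $X_0\ne X_1$.

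First I would collect the basic identities that follow from $X_0\perp X_2$ and marginalization: $b_{iji}=a_{ij}-p_ip_\ell$, where $\ell$ is the color not in $\{i,j\}$, and $b_{ijk}=p_ip_k$ for $i\ne k$. As a by-product $a_{ij}=a_{ji}$, and then the marginal equations $a_{ab}+a_{ac}=p_a$ determine each $a_{ij}$ from the $p_i$'s.

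The crux is to compute $c_{abab}$ (for distinct $a,b,c$) in two different ways. Marginalizing over $X_3$ and using $(X_0,X_1)\perp X_3$ (so that $c_{abac}=a_{ab}p_c$) gives $c_{abab}=a_{ab}(p_a+p_b)-p_ap_c$. Marginalizing instead over $X_1$ and using $X_0\perp(X_2,X_3)$ (so that $c_{acab}=a_{ac}p_b$) gives $c_{abab}=p_aa_{ab}-a_{ac}p_b$. Equating these and using $a_{ab}+a_{ac}=p_a$ collapses to $p_ap_b=p_ap_c$, whence $p_b=p_c$. Varying $a$ forces $p_1=p_2=p_3=1/3$, hence $a_{ij}=1/6$ for $i\ne j$, and then the formula yields $c_{abab}=0$ for all distinct $a,b$.

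To close, consider $\P(X_0=c,X_1=b,X_2=a,X_3=b,X_4=a)$ for distinct $a,b,c$. On one hand this is at most $c_{baba}=0$. On the other hand, the values $X_0=c$ and $X_2=a$ force $X_1=b$, so the probability equals $\P(X_0=c,X_2=a,X_3=b,X_4=a)$, which by $X_0\perp(X_2,X_3,X_4)$ factors as $p_c\cdot b_{aba}=(1/3)(1/6-1/9)=1/54>0$, a contradiction. I expect the main difficulty to lie in the crux step: one must apply $1$-dependence to the correct pair of distant index sets and correctly reduce each intermediate four-point probability to the two-point level, but after that everything is straightforward algebra.
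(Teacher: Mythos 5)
Your proof is correct, and it is a genuinely different argument than the paper's.

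The paper's proof is a short conditioning argument: setting $Y_i=\P(X_{i+1}=1\mid X_j,\,j\le i)$ and $m=\esssup Y_1$, the coloring constraint gives $Y_1\le m\,\ind[X_1\ne 1]$ a.s., and $1$-dependence lets one average this against $\mathcal{F}_0$ to deduce $\P(X_2=1)\le m(1-m)\le 1/4$; this holds for every color, so any stationary $1$-dependent proper coloring needs at least $4$ colors. That argument is shorter, applies to arbitrary $q$, and localizes the obstruction in the single inequality $m(1-m)\le 1/4$.

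Your proof instead pins down the entire finite-dimensional distribution for $q=3$: using $X_0\perp X_2$ to reduce $3$-point probabilities to pairs, you prove symmetry $a_{ij}=a_{ji}$ and solve the marginal system; then reducing $c_{abab}$ two ways forces $p_i\equiv 1/3$, $a_{ij}\equiv 1/6$, and $c_{abab}=0$; finally the $5$-point event $\{X_0=c,X_1=b,X_2=a,X_3=b,X_4=a\}$ has one computation returning $0$ (via $c_{baba}=0$) and another returning $1/54$ (via forcing $X_1$ and applying $X_0\perp(X_2,X_3,X_4)$). This is more computational but also more explicit: it determines the putative law completely before finding the inconsistency. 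One small imprecision to fix: in the second evaluation of $c_{abab}$, the reduction $c_{acab}=a_{ac}p_b$ does not come from $X_0\perp(X_2,X_3)$ (which only gives the total $\P(X_0=a,X_2=a,X_3=b)=p_a a_{ab}$); it comes from the fact that $X_1=c,X_3=b$ forces $X_2=a$, after which $(X_0,X_1)\perp X_3$ gives the factorization. The conclusion is correct, but the cited independence should be adjusted. Everything else checks out, and the proof is valid.
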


Surprisingly, it has recently been proved \cite{hl} that
there exist both a stationary $1$-dependent $4$-coloring
and a stationary $2$-dependent $3$-coloring of~$\Z$.  Thus,
the above question is answered for all $k$ and $q$.
Moreover, coloring therefore provides a very clean and
natural proof of the non-equivalence of finitely dependent
processes and block factors. (Previous counterexamples have
tended to be somewhat contrived.)

By combining the $1$-dependent $4$-coloring of \cite{hl}
with results of the current article, it is also proved in
\cite{hl} that for all $d\geq 2$ there exists a stationary
$k$-dependent $4$-coloring of~$\Z^d$, for some $k=k(d)$,
and also that for any non-lattice shift of finite type $S$
on $\Z$ there exists a stationary $k$-dependent process
that lies in $S$ a.s., for some $k=k(S)$.

Combined with our \cref{sft} (ii), this last result
provides an even more striking illustration of the
difference between finitely dependent processes and block
factors: \textit{any\/} non-lattice shift of finite type with no
constant sequence serves to distinguish between them.

\enlargethispage*{1cm} The argument we use to prove
\cref{power} (ii) will also show (\cref{kdep-zd}) that no
stationary $k$-dependent $3$-coloring of~$\Z^d$ exists for
any $k$ and $d\geq 2$. See \cite{h-finitary},\cite{hl2} for
further recent work on $k$-dependent coloring.

\subsection*{Outline of proofs}

The existence of an ffiid $q$-coloring of~$\Z^d$ satisfying
a tower function bound with \textit{some\/} number of colors
$q=q(d)$ depends on a known method that was originally
motivated by applications in distributed computing.  The
method appeared first in \cite{CV}, and was developed
further in \cite{goldberg-plotkin-shannon}, \cite{linial}
and many subsequent articles.  The version that we use is
essentially that of \cite{linial}.

Translated to our setting and terminology, the method
mentioned above implies the existence of a \textit{block\/}
factor of an iid process that is ``almost'' a coloring, in
the sense that the probability of a violation (i.e.\ of two
given neighbors having the same color) is extremely small
as a function of the block radius. Such processes can be
constructed by starting with a discrete iid process and
iteratively applying an appropriate radius-$1$ block factor
that reduces the number of colors by a logarithmic function
without producing new violations.

In order to obtain an ffiid coloring we next proceed to
``stitch together'' an infinite family of the processes
described above, with different block radii and violation
probabilities.  This can be done even on a general graph of
bounded degree. In fact, the resulting factor satisfies a
much stronger property than automorphism equivariance: to
determine the color at a vertex, we do not need to know the
graph structure, except within the coding radius.

The most elaborate and novel part of the proof of
\cref{tower} (i) involves reduction of the number of colors
to $4$ in all dimensions $d\geq 2$. This is done by
applying carefully constructed block factors to colorings
with more colors, in order to obtain a $2$-valued process
with \textit{bounded\/} clusters.  After this, we conclude by
checkerboarding the clusters with two pairs of colors in
the manner mentioned earlier.  Many of the techniques in
this proof are quite general, and have wider applicability.
(One application appears in \cite{hl}.)

The tower function lower bound \cref{tower} (ii) is also a
consequence of a known result from distributed computing,
which was proved in \cite{naor}, building on earlier work
in \cite{linial0}.  We provide a proof that is arguably
simpler and more direct than the original proof.

Turning to \cref{power}, the proof of the existence of an
ffiid $3$-coloring with power law coding radius is
considerably simpler when $d=2$. The construction in this
case is based on critical bond percolation and its dual, on
a square lattice rotated by $45$ degrees.  We assign colors
to individual clusters based on their locations in a tree
structure arising from surrounding circuits. The power law
bound is a consequence of a Russo-Seymour-Welsh estimate.

The proof of \cref{power} (i) for general $d\geq 2$ is
broadly similar but more involved.  Instead of percolation
clusters, we use a partition of~$\Z^d$ that we construct
via an iterative scheme.  The sets of the partition are not
themselves independent sets, but contain pairs of
neighbors. Therefore, each set is assigned a checkerboard
coloring using $2$ of the available $3$ colors, and this
necessitates a more complicated tree argument. The method
is quite general, and can be extended to other graphs.

The second moment bound \cref{power} (ii) is a consequence
of the existence of a \textit{height function\/} for
$3$-colorings of~$\Z^2$.  The total height change around a
large contour must be zero, otherwise it is impossible to
extend the $3$-coloring to the interior.  However, if the
coding radius has finite second moment, the height changes
along distant parts of the contour are asymptotically
uncorrelated, leading to a contradiction.

Finally, the result on shifts of finite type is again
obtained from the result on tower function coloring by the
use of appropriate block factors, while the impossibility
of $1$-dependent $3$-coloring is proved by a conditioning
argument.

\section{Tower function lower bound}
\label{tower-bounds}

In this section we prove Theorem~\ref{tower} (ii).  The following is the key
fact.  An essentially equivalent result was proved in \cite{naor}, building
on earlier work of \cite{linial0}.  We give a simple direct proof.  Another
exposition and applications appear in \cite{alon-feldheim}. Recall that
$[q]\colonequals\{1,\ldots,q\}$.

\begin{proposition} \label{min}
  Let $(U_i)_{i\in\Z}$ be iid random variables
  taking values in an arbitrary set $B$, and let $r$ and $q$ be
  positive integers.
 For any measurable function $f:B^r\to[q]$,
\[\P\Big[ f(U_1,\ldots,U_r)=f(U_2,\ldots,U_{r+1}) \Big]
\geq
\frac{1}{\tow222{4q}}\,,\] where there are $r-1$
exponentiation operations in the tower.
\end{proposition}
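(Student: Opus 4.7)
The plan is to prove the proposition by induction on $r$. For the base case $r=1$, given $f\colon B\to[q]$ and writing $p_i=\P[f(U_1)=i]$, Cauchy--Schwarz yields
\[
\P[f(U_1)=f(U_2)] \;=\; \sum_{i=1}^q p_i^2 \;\geq\; \frac{1}{q} \;\geq\; \frac{1}{4q},
\]
which matches the claimed bound, since a tower with $r-1=0$ exponentiations is just $4q$.

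For the inductive step, I would take $f\colon B^r\to[q]$ and seek to construct an auxiliary function $g\colon B^{r-1}\to[Q]$ with $Q\leq 2^{4q-2}$ satisfying
\[
\P\bigl[g(U_1,\ldots,U_{r-1})=g(U_2,\ldots,U_r)\bigr] \;\leq\; \P\bigl[f(U_1,\ldots,U_r)=f(U_2,\ldots,U_{r+1})\bigr].
\]
Applying the inductive hypothesis to $g$ would then yield the desired bound for $f$: a tower of height $r-2$ with top $4Q\leq 2^{4q}$ equals a tower of height $r-1$ with top $4q$, so $\P[f\text{-coll}]\geq \P[g\text{-coll}]\geq 1/\tow{2}{2}{2}{4q}$.

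The natural candidate for $g$ is a quantization of the continuation profile at each prefix $\bar w=(w_1,\ldots,w_{r-1})$: record, to suitable precision, the pair of distributions on $[q]$ given by $\gamma_{\bar w}(i)=\P[f(U,\bar w)=i]$ and $\delta_{\bar w}(i)=\P[f(\bar w,U)=i]$ for an iid $U$. Each distribution lives in $\Delta([q])$ and can be binned into $\leq 2^{2q-1}$ cells of sufficiently small diameter, so the pair fits in $\leq 2^{4q-2}$ values. Conditioning on the shared middle coordinates $V=(U_2,\ldots,U_r)$ and exploiting the independence of $U_1$ and $U_{r+1}$ gives
\[
\P[f\text{-coll}] \;=\; \E_V \sum_{i=1}^q \gamma_V(i)\,\delta_V(i),
\]
showing that the $f$-collision is governed by an inner product of the two continuation distributions, which when $\gamma_V\approx\delta_V$ is bounded below by $1/q$ via Cauchy--Schwarz.

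The main obstacle is establishing the key inequality $\P[g\text{-coll}]\leq\P[f\text{-coll}]$. The difficulty is that the $g$-collision event involves matching quantized profiles at two shifted $(r-1)$-tuples, whereas the relevant $f$-collision is driven by the inner product of $\gamma_V$ and $\delta_V$ at a single $V$, together with fresh endpoint variables. Bridging this gap requires choosing the discretization fine enough that matching profiles force both $\gamma$ and $\delta$ to be close with the right alignment—so that the ensuing Cauchy--Schwarz estimate produces the required $f$-collision whenever $g$ collides—while coarse enough to keep the alphabet size at $2^{4q-2}$. Calibrating this trade-off is the technical heart of the argument.
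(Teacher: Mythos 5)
Your base case is identical to the paper's. However, your inductive step has a genuine gap that you yourself flag: you never establish the key inequality $\P[g\text{-coll}]\leq\P[f\text{-coll}]$, and I do not think the proposed quantization scheme can deliver it in this clean form. The problem is a structural mismatch. As you observe, conditionally on the middle coordinates $V=(U_2,\ldots,U_r)$ the $f$-collision probability is $\sum_i \gamma_V(i)\,\delta_V(i)$, an inner product of two continuation profiles evaluated at the \emph{single} tuple $V$. But a $g$-collision is the event that the quantized profile pairs at the two \emph{shifted} tuples $(U_1,\ldots,U_{r-1})$ and $(U_2,\ldots,U_r)=V$ agree. This tells you $\delta_{(U_1,\ldots,U_{r-1})}\approx\delta_V$ and $\gamma_{(U_1,\ldots,U_{r-1})}\approx\gamma_V$, but it says nothing about whether $\gamma_V$ is close to $\delta_V$, which is what you need to make $\sum_i\gamma_V(i)\delta_V(i)$ large via Cauchy--Schwarz. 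There is no consistency relation forcing the forward and backward profiles at a given prefix to align just because they are individually stable under shifting. So the bridge ``$g$ collides $\Rightarrow$ $f$ likely collides'' is missing, and it is precisely this bridge that constitutes the theorem.

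The paper sidesteps this by defining a \emph{set-valued} function $S(u_1,\ldots,u_{r-1})\subseteq[q]$, the set of colors the next step takes with probability at least $\eps$. A match $S(U_1,\ldots,U_{r-1})=S(U_2,\ldots,U_r)$, together with the defining $\eps$-threshold, gives a direct lower bound: on the high-probability event that $f(U_1,\ldots,U_r)$ lands in $S(U_1,\ldots,U_{r-1})$, it also lies in $S(U_2,\ldots,U_r)$, and hence $f(U_2,\ldots,U_{r+1})$ hits the same value with conditional probability at least $\eps$. The alphabet is $2^q$ rather than your $2^{4q-2}$, and the recursion is $\delta(r,q)\geq \delta(r-1,2^q)^2/(4q)$, not a clean monotonicity $\P[g\text{-coll}]\leq\P[f\text{-coll}]$; the extra factors of $\eps$ and the loss $q\eps$ are an essential price for converting a match of auxiliary data into an $f$-collision, and you should not expect to avoid them. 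If you want to pursue your approach, you should target an inequality of that weaker quantitative shape, and more importantly you need to replace the continuation-profile pair with a quantity whose equality at shifted tuples mechanically forces a positive conditional $f$-collision probability; the set of $\eps$-likely continuations is a natural such quantity, whereas a binned pair of distributions is not.
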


\color{black}
If the $U_i$'s have a continuous distribution, then
\[\P\Big[ (U_1,\ldots,U_r)=(U_2,\ldots,U_{r+1}) \Big]=0\,,\] so it
\color{black}
is not obvious \textit{a priori\/} that the probability
in \cref{min} must be positive.  If the $U_i$'s have a discrete
distribution, the probability is positive, but it is not clear
\textit{a priori\/} that there is a positive lower bound depending
only on $r$ and $q$ that holds for all such distributions.
The results of Section~\ref{tower-general} below show that the tower
function bound is essentially tight.

\begin{proof}[Proof of Proposition~\ref{min}]
We will use induction on $r$.
  Let $\delta(r,q)$ be the largest number for which
  \[\P\Big[ f(U_1,\ldots,U_r)=f(U_2,\ldots,U_{r+1}) \Big]
  \geq\delta(r,q)\]
  for all choices of $B$, $f$, and the law of the $U_i$.
  When $r=1$ it is
  elementary that
  \[\P[ f(U_1)=f(U_2)] =\sum_{a=1}^q \P[f(U_1)=a]^2
  \geq \frac 1q\,,\]
 so $\delta(1,q)=1/q \geq 1/(4q)$, proving the result when $r=1$.

Now suppose $r\geq 2$.  Let $\eps\colonequals
\delta(r-1,2^q)/(2q)$, and
  define for $u_1,\ldots,u_{r-1}\in B$:

  \[S(u_1,\ldots,u_{r-1})\colonequals
  \Big\{a\in[q]:\P\big[f(u_1,\ldots,u_{r-1},U_r)=a\big]\geq
  \eps\Big\}\,.\]
  This is the set of values that $f$ assumes with
  probability $\geq \eps$ given the first $r-1$ arguments.  Since
  $S$ is a function on $B^{r-1}$ taking at most $2^q$ possible values
  (the subsets of~$[q]$), by the definition of~$\delta$ we have
\[\P\big[S(U_1,\ldots,U_{r-1})=S(U_2,\ldots,U_{r})\big]
\geq \delta(r-1,2^q)\,.\]
  But the definition of~$S$ implies
  \[\P\big[f(U_1,\ldots,U_{r})\notin S(U_1,\ldots, U_{r-1})\big]\leq
  q\eps\,,\]
  so we deduce
  \begin{equation}\label{d-e}
    \P\big[f(U_1,\ldots,U_{r})\in S(U_2,\ldots, U_r)\big]
    \geq\delta(r-1,2^q)-q\eps.
  \end{equation}

By the definition of~$S$ again, conditional on
$U_2,\ldots,U_r$, each element of $S(U_2,\ldots, U_r)$ has
probability at least $\eps$ as a possible value for the
random variable $f(U_2,\ldots,U_{r+1})$, and this remains
true if we condition also on $U_1$ (since $U_1$ and
$U_{r+1}$ are conditionally independent given
$U_2,\ldots,U_r$).  Therefore, almost surely
  \begin{multline*}
    \P\Big[ f(U_2,\ldots,U_{r+1})=f(U_1,\ldots,U_{r}) \;\Big|\;
    U_1,\ldots,U_r\Big] \\ \geq \ind\big[f(U_1,\ldots,U_{r})\in
    S(U_2,\ldots, U_r)\big]\times\eps.
  \end{multline*}
  Taking the expectation and using \eqref{d-e} gives
  \[\P\big[ f(U_1,\ldots,U_{r}){=}f(U_2,\ldots,U_{r+1})\big]
  \geq [\delta(r-1,2^q)-q\eps]\eps=\frac{\delta(r-1,2^q)^2}{4q}\,.\]
  Thus
\begin{equation}\label{ind-step}
  \delta(r,q) \geq \frac{\delta(r-1, 2^q)^2}{4q}.
\end{equation}

All that remains is to use \eqref{ind-step} to check the
claimed bound on $\delta$.  For $r=2$ we obtain
\[\delta(2,q) \geq \frac{1}{2^{2 q}} \frac{1}{4 q} \geq
\frac{1}{2^{4q}}\] as required.  We now use induction on
$r$ with base case $r=2$.  Since
  obviously $\delta(r,1)=1$, we assume $q\geq 2$.  Suppose
  $\delta(r,q) \geq 1 / {2^{2^{\cdots {2^{4 q}}}}}$ where there
are $r-1$ exponentiation operations in the tower.  Then
\eqref{ind-step}
  gives
\[ \delta(r+1,q) \geq
\frac1{\biggl(
\raisebox{-5pt}{$\tow222{4\times 2^q}$}
\biggr)^{\!\!2} 4q}
\geq \frac1{\biggl(
\raisebox{-5pt}{$\tow222{4\times 2^q}$}
\biggr)^{\!\!4}}
= \frac1{\tow{16}22{4\times 2^q}}\,.
\]
  Observe that when  $x\geq \frac{2}{3}$
 we have
$16^{2^{x}} = 2^{2^{x+2}} \leq 2^{2^{4 x}} = 2^{16^x}$, so
\[ \delta(r+1,q) \geq
\frac1{\tow22{16}{4\times 2^q}} \,.\] But $16^{4 \times
2^{q}} = 2^{2^{q+4}} \leq 2^{2^{4q}}$ for $q\geq 2$, which
completes the induction.
\end{proof}

The following notation will be useful. Suppose $X$ is an
ffiid process with underlying iid process $Y$ and coding
radius~$R$, and recall that $R=R(Y)$ where $R$ is a map
from configurations $y=(y_v)_{v\in\Z^d}$ to~$\Z$.  For
$v\in\Z^d$, define the coding radius at $v$ to be the
random variable
\[R_v\colonequals R(\theta_{-v} Y)\,,\] where
$\theta_{-v}$ denotes translation by $-v$, defined by
$(\theta_{-v}y)(u)\colonequals y_{u+v}$. Thus, $R_v$ is the radius
around $v$ up to which we need to examine the $Y$ variables
in order to determine $X_v$. Note that $R=R_0$, and that
the random variables $(R_v)_{v\in\Z^d}$ are identically
distributed.

\begin{proof}[Proof of Theorem \ref{tower} (ii)]
Let $X$ be an ffiid $q$-coloring of~$\Z^d$.  Suppose first
that $d=1$. Fix $r>0$ and define a modified process $X'$ by
\[X'_v \colonequals \begin{cases}
  X_v, &R_v\leq r;\\
  \infty, &R_v>r.
\end{cases}
\]
Then $X'$ is an ffiid process with coding radius bounded
above by $r$, i.e., $X'$ is a $(2r+1)$-block-factor. Since
$X$ is a coloring,
\[\P(X'_0=X'_1)=\P(X'_0=X'_1=\infty)=\P(R_0,R_1>r)\leq \P(R>r)\,.\]
On the other hand, Proposition~\ref{min} gives
\[\P(X'_0=X'_1)\geq
\frac 1 {\tow222{4(q+1)}}\,,\]
 with $2r$ exponentiations in the tower.
This is at least $1/\tower(Cr)$ for some $C$ depending only
on $q$, as required.

Now suppose $d\geq 2$.  The restriction of the coloring $X$
to the axis $\Z\times\{0\}^{d-1}$ is itself an ffiid
$q$-coloring of~$\Z$, with underlying iid process
$(Z_i)_{i\in\Z}$ given by the slices
$Z_i\colonequals (Y_{(i,w)})_{w\in\Z^{d-1}}$ (where $Y$ is the
underlying iid process for $X$). Furthermore, the coding
radius of the $1$-dimensional process is at most the coding
radius of~$X$, so the required bound follows from the
$1$-dimensional case proved above.
\end{proof}

\section{Tower coloring on general graphs}
\label{tower-general}

In preparation for the proof of \cref{tower} (i), in this
section we prove that on any graph of maximum degree
$\Delta$, there is an ffiid $(\Delta+1)$-coloring whose
coding radius has tower function tails, and that is an {\em
automorphism}-equivariant factor of the underlying iid
process. In particular, on $\Z^d$ this gives an
isometry-equivariant $(2d+1)$-coloring --- we will improve
this to $4$ colors for all $d\geq 2$ in the next section.

In fact, we will construct a coloring with a much stronger
property than automorphism-equivariance: the color at a
vertex can be determined locally without knowledge of the
graph itself --- we need only examine the iid labels and
the graph structure within the coding radius, and the
construction is invariant even under graph-automorphisms of
this local structure.  We now make this precise.

Let $G=(V,E)$ be a simple undirected graph.  We write $u\sim v$ if $\langle
u,v\rangle\in E$.  A {\df configuration} on $G$ is an element $z=(z_v)_{v\in
V}$ of~$\R^V$ that assigns labels to the vertices. A {\df labeled rooted
graph} is a triple $(G,o,z)$ consisting of a simple graph $G=(V,E)$, a root
$o\in V$, and a configuration $z$ on $G$. We call two labeled rooted graphs
\mbox{\df isomorphic} if there is a graph isomorphism between them that
preserves the root and the labels.  We call two labeled rooted graphs {\df
isomorphic to distance~$r$} if the labeled rooted subgraphs induced by the
respective sets of vertices within graph-distance~$r$ of their roots are
isomorphic.  A {\df local graph function} is a function $f$ from labeled
rooted graphs to $\R$, such that for every $(G,o,z)$ there exists $r\leq
\infty$ such that $f(G,o,z)=f(G',o',z')$ whenever $(G',o',z')$ and $(G,o,z)$
are isomorphic to distance~$r$. Let $R=R(f,G,o,z)$ be the minimum such~$r$.

A local graph function $f$ induces a map $F$ between
configurations on graphs as follows.  Let $G$ be a graph
and let $z$ be a configuration on $G$. Define the
configuration $F(z)$ by $(F(z))_v\colonequals f(G,v,z)$.  We call $F$
a {\df graph-factor map}.  A {\df process} on $G$ is a
random configuration $Z=(Z_v)_{v\in V}$, and it is {\df
$A$-valued} if each $Z_v$ takes values in a set
$A\subseteq\R$. If $Z$ is a process on $G$ and $X=F(Z)$
then we say that the process $X$ is a graph-factor of~$Z$,
and for $v\in V$ we call $R_v\colonequals R(f,G,v,Z)$ the {\df coding
radius} at $v$. If $R_v<\infty$ a.s.\ for all $v$ then it
is a {\df finitary} graph-factor, and if $R_v\leq r$ a.s.\
for all $v$ and some deterministic $r<\infty$ then it is a
{\df block} graph-factor. We call $X$ {\df graph-ffiid} if
it is a finitary graph-factor of some iid process. Recall
that $[q]\colonequals \{1,\ldots,q\}$.  A process~$X$ on a graph $G$
is a {\df $q$-coloring} if it is $[q]$-valued, and a.s.\
$X_u\neq X_v$ whenever $u \sim v$.

\begin{theorem}[Tower coloring on graphs]\label{graph}\sloppypar
Let $\Delta\geq 1$ be an integer.  There exists $C=C(\Delta)>0$ such that for
every graph $G$ of maximum degree $\Delta$, there is a graph-ffiid
$(\Delta+1)$-coloring of~$G$ such that for every vertex~$v$, the coding
radius~$R_v$ satisfies
\[\P(R_v>r)<1/\tower(C r), \qquad r>0.\]
\end{theorem}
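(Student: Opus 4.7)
The plan is to combine two ingredients: (1) for each scale $k$, a radius-$k$ block graph-factor that is ``almost'' a proper $(\Delta+1)$-coloring, with per-vertex failure probability decaying as a tower function of $k$; and (2) a multi-scale stitching that combines these block factors into a genuine ffiid coloring while preserving the tower tail of the coding radius. Both ingredients are intrinsically local (defined using only the graph structure and iid labels within a ball around the root), so the resulting factor is automatically graph-ffiid as required.

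For the first ingredient, I would adapt the Cole--Vishkin/Linial color-reduction method mentioned in the outline. A single round of Linial's deterministic radius-$1$ block reduction, built from a fixed cover-free family, turns a proper $M$-coloring into a proper $O(\Delta^2 \log M)$-coloring: each vertex examines its own color and its neighbors' colors and applies a fixed combinatorial rule. Starting from iid uniform labels in a set of size $N := \tower(c_1 k)$, all labels on $B(v,2k)$ are distinct except on an event of probability at most $|B(v,2k)|^2/N$, giving a proper $N$-coloring locally. Applying $k$ rounds of the Linial reduction (followed by a bounded number of cleanup rounds to descend from $O(\Delta^2)$ colors down to $\Delta+1$) then produces a proper $(\Delta+1)$-coloring throughout $B(v,k)$. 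The result is a radius-$k$ block graph-factor $F_k$ whose per-vertex failure probability is at most $1/\tower(c k)$.

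For the stitching, take mutually independent iid inputs $Y^{(1)},Y^{(2)},\ldots$ and set $X^{(k)} := F_k(Y^{(k)})$. At each vertex $v$, let $K(v)$ be the smallest $k$ such that $X^{(k)}$ is a proper coloring on a sufficiently large ball around $v$, and output $X_v := X^{(K(v))}_v$. By independence across scales and a Borel--Cantelli-type bound, $\P(K(v) > k) \leq 1/\tower(c' k)$, and the coding radius $R_v$ is controlled by a polynomial in $K(v)$, giving the required tower tail.

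The main obstacle is ensuring that $X$ is genuinely a proper coloring. Because the $Y^{(k)}$ are independent across scales, if two adjacent vertices $u,v$ select different scales $K(u)\neq K(v)$, their assigned colors come from independent processes and could coincide by chance. The remedy is to design $K$ so that it is a.s.\ constant on edges of $G$: this can be arranged by requiring $X^{(k)}$ to be a proper coloring throughout a ball of radius much larger than $k$ around $v$, chosen large enough to absorb one-step shifts and force $K(u)=K(v)$ for every neighbor $u$. Verifying that this enlarged variant of $K$ is a.s.\ finite with a tower-decaying tail, and that the resulting $X$ is proper with tower coding radius, is the technical heart of the argument; I expect this stitching step, rather than the Linial reduction itself, to be the delicate part of the proof.
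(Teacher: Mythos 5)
Your first ingredient is essentially the paper's Proposition on ``almost colorings'': a sequence of radius-$k$ block graph-factors, built from iid labels via a Linial/cover-free color-reduction scheme, that are proper except on a sparse failure set whose per-vertex probability decays like $1/\tower(ck)$. That part is sound and in the same spirit as the paper.

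The gap is in the stitching. You correctly identify the central difficulty --- if adjacent vertices select different scales, their colors come from processes with no compatibility guarantee --- but the proposed fix does not work. Defining $K(v)$ as the smallest $k$ for which $X^{(k)}$ is proper on $B(v,r_k)$, no choice of the radii $r_k$ forces $K(u)=K(v)$ for $u\sim v$: the balls $B(u,r_k)$ and $B(v,r_k)$ always differ by some vertices at distance exactly $r_k$, and a violation of $X^{(k)}$ in $B(u,r_k)\setminus B(v,r_k)$ produces $K(u)>k\geq K(v)$. There is no ``absorbing one-step shifts'' here, because the obstruction sits on the ball boundary and shifts with the center. Worse, even if one could engineer $K$ to be a.s.\ constant on edges, it would then be constant on every connected component of $G$, i.e.\ a single global random variable; such a $K$ cannot be determined from any finite ball of iid labels unless it is a.s.\ deterministic, and a deterministic $K$ would give a bounded-radius block factor that colors properly a.s., which is ruled out by the tower lower bound (Proposition~\ref{min}). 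So the ``pick the first good scale per vertex'' scheme cannot be made both finitary and proper by enlarging radii.

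The paper's route around this is qualitatively different and is the real content of the stitching step. Rather than each vertex choosing a scale, the partial colorings are \emph{merged monotonically}: starting from $X^0\equiv\infty$, one forms $X^k$ from $X^{k-1}$ by superimposing the scale-$k$ almost-coloring $Y^k$ (with its colors shifted to a fresh palette) onto the $\infty$-sites of $X^{k-1}$, then applying the radius-$1$ color-elimination maps $\mathcal{E}_a$ to squeeze the extra colors back down to $[\Delta+1]$ without ever violating adjacencies that are already properly colored. This has two crucial properties you cannot get from independent per-scale selection: (i) once a vertex is assigned a finite color at stage $k$, it keeps that color at all later stages, so the limit exists pointwise and has bounded coding radius wherever any stage succeeds; and (ii) every $X^k$ is a proper partial coloring globally, so no consistency across scales ever needs to be enforced between neighbors --- it is built in. The tower tail then follows because the probability a vertex is still uncolored at stage $k$ is $O(1/n_k)$ and the coding radius at that stage is $O(n_1 k)$. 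I would redirect your effort to this monotone-refinement idea rather than trying to repair the scale-selection scheme.
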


The proof will actually give an even stronger fact: the
same local graph function may be used for all graphs of
maximum degree $\Delta$. The proof will proceed by
combining in a suitable way a family of block graph-factors
that are almost colorings in the sense that the probability
that neighbors share a color decays very rapidly as a
function of the block coding radius. As remarked earlier,
the existence of such block-factor processes is essentially
equivalent to known results in the distributed computing
literature.  However, the different focus in the latter
field makes it difficult to translate the results directly
into mathematical ones of the form we need. For the
reader's convenience we therefore provide a complete proof,
which is quite straightforward.

We will make extensive use of the fact that if $F$ and $G$
are block graph-factor maps with coding radii at most $r$
and $s$ then the composition $F\circ G$ is a block
graph-factor map with coding radius at most $r+s$.  We also
need the following simple result on set systems.
Refinements and generalizations appear in \cite{EFF}.

\begin{samepage}
\begin{lemma}[Set systems] \label{sperner}
\sloppy For each positive integer $d$ there exists $c=c(d)>0$ such that,
provided $n\leq e^{c k}$, there exists a family of~$n$ sets $S_1,\ldots
,S_n\subseteq [k]$ satisfying
\[S_{i_0}\not\subseteq S_{i_1}\cup\cdots\cup S_{i_d}\]
for all distinct $i_0,\ldots,i_d\in[n]$.
\end{lemma}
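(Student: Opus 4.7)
The plan is a standard first-moment argument. Let $S_1,\ldots,S_n$ be i.i.d.\ random subsets of $[k]$, where each element $j\in[k]$ is included in $S_i$ independently with probability $p\in(0,1)$ to be chosen. I want to show that with positive probability no forbidden inclusion $S_{i_0}\subseteq S_{i_1}\cup\cdots\cup S_{i_d}$ occurs among distinct indices $i_0,\ldots,i_d$.

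Fix such a tuple. For each $j\in[k]$ independently, the event $\{j\in S_{i_0},\ j\notin S_{i_1}\cup\cdots\cup S_{i_d}\}$ has probability $p(1-p)^d$, and these events witness that the forbidden inclusion fails. Hence
\[
\P\bigl[S_{i_0}\subseteq S_{i_1}\cup\cdots\cup S_{i_d}\bigr]
= \bigl(1-p(1-p)^d\bigr)^k \le \exp\!\bigl(-k\,p(1-p)^d\bigr).
\]
Choosing $p=1/(d+1)$ gives $p(1-p)^d\ge 1/(e(d+1))$, a quantity depending only on $d$. Set $\gamma\colonequals 1/(e(d+1))$.

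Now take a union bound over all ordered tuples of $d+1$ distinct indices: there are at most $n^{d+1}$ of them, so the probability that the construction fails is at most
\[
n^{d+1}\exp(-\gamma k).
\]
Provided $n\le e^{ck}$ with $c\colonequals \gamma/(2(d+1))$, this bound is at most $\exp\!\bigl(-\gamma k/2\bigr)<1$ (for $k$ large; the finitely many small $k$ can be absorbed by shrinking $c$, or handled by a trivial construction). Thus some realization of $S_1,\ldots,S_n$ satisfies the required property, proving the lemma with $c=c(d)$ as above.

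There is no real obstacle here; the only point that needs care is the optimization of $p$ and ensuring $c$ depends only on $d$ (not on $k$ or $n$). The argument is standard for cover-free families, and tightness up to the form $n\le e^{\Theta(k)}$ is well known.
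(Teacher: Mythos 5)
Your proof is correct and follows essentially the same first-moment/union-bound argument as the paper; the only difference is that you include each element in $S_i$ independently with probability $p=1/(d+1)$ (optimizing the constant), whereas the paper simply uses uniformly random subsets, i.e.\ $p=1/2$. Both choices yield a valid $c=c(d)>0$, with your choice giving a constant that decays polynomially rather than exponentially in $d$ --- an improvement the lemma does not require but certainly permits.
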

\end{samepage}

\begin{proof}
Let $S_1,\ldots,S_n$ be iid uniformly random subsets of
$[k]$. The probability that $S_{d+1}\subseteq
S_{1}\cup\cdots\cup S_{d}$ is $(1-2^{-d})^k=C^k$, say,
where $C=C(d)\in(0,1)$.  Therefore the expected number of
vectors $(i_0,\ldots,i_d)$ of distinct entries such that
$S_{i_0}\subseteq S_{i_1}\cup\cdots\cup S_{i_d}$ is at most
$n^{d+1} C^k$. This is strictly less than $1$ provided
$n<(C^{-1/(d+1)})^k$, which implies that there exist
families of sets for which there are no such vectors.
\end{proof}

We next prove the existence of ``almost colorings'' as
mentioned above.  Fix $\Delta\geq 1$. Let $c=c(\Delta)$ be
as in Lemma~\ref{sperner}, and define a sequence
$n_1<n_2<\cdots$ as follows. Let $n_1$ be the smallest
positive integer such that $\lfloor e^{cn_1}\rfloor>n_1$,
and define inductively for $i\geq 1$:
\[n_{i+1}\colonequals \lfloor e^{c n_i}\rfloor\,.\]
It is easy to check that $n_i \geq \tower(c'i)$ for all $i$
and some $c'=c'(\Delta)>0$. The following is a variant of a
result of \cite{linial}.

\begin{proposition}[Almost colorings]\label{tower-block}
Let $G=(V,E)$ be a graph of maximum degree $\Delta$, and
define $(n_k)_{k\geq 1}$ as above.  For each $k\geq 1$
there exists an $[n_1]\cup\{\infty\}$-valued block
graph-ffiid process $Y=Y^k$, with coding radius bounded
above by $k$ for every vertex, and with the following
properties. For adjacent vertices $u\sim v$ we have either
$Y_u\neq Y_v$ or $Y_u=\infty=Y_v$. For any vertex~$v$ we
have $\P(Y_v=\infty)\leq \Delta/n_k$.
\end{proposition}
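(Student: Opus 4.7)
The plan is to build $Y^k$ by first producing an initial ``rough coloring'' with alphabet $[n_k]\cup\{\infty\}$ from iid labels, and then applying $k-1$ successive radius-$1$ alphabet-shrinking block graph-factors, each of which transforms a process with values in $[n_i]\cup\{\infty\}$ into one with values in $[n_{i-1}]\cup\{\infty\}$ without increasing the probability of~$\infty$. Composing these with the initial step gives a block graph-factor of coding radius at most $1+(k-1)=k$, as required, and the final $\infty$-probability will still be at most $\Delta/n_k$.

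For the initialization, let $(Z_v)$ be iid uniform on $[n_k]$, and set $Y^{(0)}_v\colonequals Z_v$ if $Z_v\neq Z_u$ for every $u\sim v$, and $Y^{(0)}_v\colonequals \infty$ otherwise. This has coding radius~$1$ in the graph-factor sense, trivially satisfies the adjacency requirement, and a union bound over the at most $\Delta$ neighbors of~$v$ yields $\P(Y^{(0)}_v=\infty)\leq\Delta/n_k$. For the reduction step from alphabet $[n_i]\cup\{\infty\}$ to $[n_{i-1}]\cup\{\infty\}$, I apply \cref{sperner} with $d=\Delta$ to obtain sets $S_1,\ldots,S_{n_i}\subseteq[n_{i-1}]$; this is possible because the recursion $n_i=\lfloor e^{cn_{i-1}}\rfloor$ was designed so that $n_i\leq e^{cn_{i-1}}$. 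Given such a $Y$ satisfying the adjacency property, I define $Y'_v\colonequals\infty$ whenever $Y_v=\infty$; otherwise, letting $N_v\colonequals\{u\sim v:Y_u\neq\infty\}$, the adjacency property of~$Y$ ensures that $Y_v$ together with $\{Y_u:u\in N_v\}$ is a collection of at most $\Delta+1$ distinct elements of $[n_i]$. Hence \cref{sperner} guarantees $S_{Y_v}\not\subseteq\bigcup_{u\in N_v}S_{Y_u}$, and I take $Y'_v$ to be the smallest element of this nonempty difference. This is a radius-$1$ local graph function, it leaves the $\infty$-probability unchanged, and it preserves the adjacency property: if $u\sim v$ with $Y'_u,Y'_v\in[n_{i-1}]$, then $Y'_v\in S_{Y_v}\setminus S_{Y_u}$ while $Y'_u\in S_{Y_u}$, so $Y'_v\neq Y'_u$. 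Iterating this step $k-1$ times starting from $Y^{(0)}$ produces the desired $Y^k$.

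The main obstacle is the careful handling of the $\infty$ values: they must not propagate from a vertex to its neighbors during successive reductions, and yet the adjacency property must continue to guarantee the distinctness of the labels on which \cref{sperner} is invoked. The scheme above resolves this tension by marking $v$ as $\infty$ at each step only if $v$ itself was already $\infty$, and then invoking Sperner only on the non-$\infty$ portion of the neighborhood, where the distinctness needed is exactly what the previous step's adjacency property provides. A secondary point worth checking is that the whole construction is a local graph function in the strong sense of \cref{graph}: at every stage $Y'_v$ depends only on the rooted labeled ball of radius~$1$ around~$v$ in the previous process, so the full composition depends only on the rooted labeled ball of radius~$k$ in the underlying iid-labeled graph, independently of which graph of maximum degree~$\Delta$ we are working on.
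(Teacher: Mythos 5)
Your proposal is correct and takes essentially the same approach as the paper: an iid initialization that marks collisions by $\infty$, followed by $k-1$ radius-$1$ color-reduction steps built from the Sperner-type set system of \cref{sperner}, with the $\infty$-set preserved exactly at each step and the adjacency property maintained via the non-containment $S_{Y_v}\not\subseteq\bigcup_{u\in N_v}S_{Y_u}$. The only cosmetic differences are that the paper indexes the intermediate processes in reverse ($Z^k,\dots,Z^1$) and handles the $\infty$ case uniformly by setting $S_\infty\colonequals\varnothing$ rather than by an explicit case split.
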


\begin{proof}
We will construct a sequence of  processes
$Z^k,\ldots,Z^1$, each a radius-$1$ block graph-factor of
the previous one, ending with the required process $Y=Z^1$.
(The reverse indexing is a notational convenience.) The
process $Z^i$ will be $[n_i]\cup\{\infty\}$-valued. Let
$(Z_v)_{v\in V}$ be iid random variables, each uniform on
$[n_k]$. Define the first process $Z^k$ by setting
$Z^k_v\colonequals\infty$ if $Z_v=Z_u$ for some $u\sim v$, and
otherwise setting $Z^k_v\colonequals Z_v$.

Now suppose that $Z^k,\ldots,Z^{i+1}$ have been defined. We
will construct
 $Z^{i}$ from $Z^{i+1}$.  Fix a
family of~$n_{i+1}$ subsets $(S_j)_{j\in [n_{i+1}]}$
of~$[n_i]$ so that none is contained in the union of any
$\Delta$ others; Lemma~\ref{sperner} and the definition
of~$n_i$ ensure that this is possible.  For a vertex~$v$,
write $S(v)\colonequals S_{Z^{i+1}_v}$ for the
corresponding set, where we take $S_\infty\colonequals
\varnothing$.  Now define
\begin{equation}\label{reduction}
Z^{i}_v\colonequals \min\Bigl(S(v) \setminus \textstyle\bigcup_{u\sim v} S(u)\Bigr),
\end{equation}
where $\min\varnothing\colonequals\infty$.

We claim that for adjacent vertices $u \sim v$, and any
$i$, either $Z^i_u\neq Z^i_v$, or both are $\infty$, and
moreover, for any $v$ we have $Z^i_v=\infty$ if and only if
$Z^k_v=\infty$.  This follows easily by induction on $i$.
It certainly holds for $i=k$.  By \eqref{reduction} and
\cref{sperner}, if $Z^i_v=\infty$ then either
$Z^{i+1}_v=\infty$ or $Z^{i+1}_v=Z^{i+1}_u$ for some $u\sim
v$.  Moreover, for $u \sim v$, if $Z^{i}_u\neq\infty\neq
Z^{i}_v$ then $Z^{i}_v\in S(v)\setminus S(u)$ and
$Z^{i}_u\in S(u)\setminus S(v)$, so $Z^{i}_u\neq Z^{i}_v$.

Finally we set $Y=Z^1$.  It is evident from the
construction that $Y$ is a block graph-ffiid process with
coding radius at most $k$. We have
\[\P(Y_v=\infty)=\P(Z^k_v=\infty)=\P(Z_v=Z_u\text{ for some }u\sim v)\leq
\Delta/n_k. \qedhere\]
\end{proof}

In addition to the above result we will use the following
simple procedure for eliminating colors, which has other
applications also.  Let $\Z^+$ denote the positive
integers.  Suppose that $X$ is a $\Z^+
\cup\{\infty\}$-valued process on a graph $G=(V,E)$. Let
$a\in \Z^+$. We define a new process $\mathcal{E}_a X$ by
\[
(\mathcal{E}_a X)_v \colonequals
\begin{cases}
\min\Bigl(\Z^+ \setminus \{X_u: u\sim v\}\Bigr), & X_v=a;\\
X_v,& X_v\neq a.
\end{cases}
\]
Thus, the map $\mathcal{E}_a$ replaces color $a$ with the
smallest color that is absent from the neighbors of the
vertex.  This replacement color is in $[\Delta+1]$ if $G$
has maximum degree $\Delta$.  Neighboring vertices have
distinct colors in $\mathcal{E}_a X$ provided they do in
$X$.  Note that $\mathcal{E}_a$ is a radius-$1$ block
graph-factor map.

A simple application of the map defined above is that if
$X$ is a $q$-coloring of a graph of maximum degree
$\Delta$, then $\mathcal{E}_{\Delta+2}
\mathcal{E}_{\Delta+3} \cdots \mathcal{E}_{q} X$ is a
$(\Delta+1)$-coloring.  We use this idea in a more subtle
way in the next proof.

\begin{proof}[Proof of \cref{graph}]
Let $G=(V,E)$ be a graph of maximum degree $\Delta$.  Let
$(n_i)_{i\geq 1}$ be defined as above, and let
$Y^1,Y^2,\ldots$ be the processes of \cref{tower-block},
each constructed from the same iid family $(U_v)_{v\in V}$
(say by taking $Z_v=\lceil n_k U_v\rceil$ at the beginning
of the proof of \cref{tower-block}, where $U_v$ is uniform
on $[0,1]$). Recall that each $Y^k$ is
$[n_1]\cup\{\infty\}$-valued, and is a coloring except at
the vertices that are labeled $\infty$ (and that the
probability of label $\infty$ decreases rapidly with $k$).

We now construct a sequence of $[\Delta+1]\cup \{\infty\}$-valued processes
$X^0,X^1,X^2,\ldots$.  The desired coloring will be formed by taking their
limit.  First let $X^0_v\colonequals\infty$ for all $v$. Assuming
$X^0,\ldots,X^{k-1}$ have been defined, we next construct $X^{k}$ from
$X^{k-1}$ and $Y^k$. To do this, we first define an auxiliary
$[\Delta+1+n_1]\cup \{\infty\}$-valued process $W^{k}$ via
\[W^{k}_v\colonequals X^{k-1}_v \wedge (Y^{k}_v +\Delta+1)\,.\]
In other words, we construct $W^{k}$ from $X^{k-1}$ by
replacing occurrences of $\infty$ with the process $Y^k$
from the previous lemma, with the colors increased by
$\Delta+1$ so that they are distinct from the existing
ones. (Of course we take
$\infty+\Delta+1\colonequals\infty$). We now obtain $X^{k}$
from $W^{k}$ by eliminating these extra colors:
\[X^{k} \colonequals
\mathcal{E}_{\Delta+2} \mathcal{E}_{\Delta+3} \cdots
\mathcal{E}_{\Delta+1+n_1} W^{k}\,.
\]

Note that for any vertex~$v$, if $X^k_v\neq \infty$ for
some $k$ then $X^j_v$ is constant for all $j\geq k$. We
therefore define $X_v\colonequals\lim_{k\to\infty} X^k_v$.
By \cref{tower-block}, for all $k$,
\[\P(X_v=\infty)\leq\P(X^k_v=\infty)\leq \P(Y^k_v=\infty)\leq{\Delta}/{n_k}
\xrightarrow{k\to\infty} 0\,,\]
 and it follows that $X$ is a $(\Delta+1)$-coloring of~$G$.
Now, for any block graph-ffiid process $Z$, write $r(Z)$ for the smallest
constant $r$ such that the coding radius at every vertex is bounded above by
$r$. Then
\[r(X^k)\leq n_1+r(W^k)\leq n_1+[r(X^{k-1})\vee
r(Y^k)]=n_1+[r(X^{k-1})\vee k]\,.\]
 Hence we have $r(X^k)\leq n_1k+1$ for all $k$.
It follows that $X$ is graph-ffiid with coding radius~$R_v$
satisfying
\[\P(R_v>n_1k+1)\leq \P(X^k_v=\infty)\leq{\Delta}/{n_k}\]
for every $v$. As remarked earlier we have $n_i\leq
\tower(c'i)$ for some $c'=c'(\Delta)>0$, so the claimed
bound on $\P(R_v>r)$ follows.
\end{proof}

\section{Tower four-coloring}
In this section we prove \cref{tower} (i).  \cref{graph} in
the last section already gives an isometry-equivariant
ffiid $(2d+1)$-coloring of~$\Z^d$ for all $d\geq 1$, thus
proving the $d=1$ case.  For $d\geq 2$, the idea will be to
use \cref{graph} to obtain a coloring of a spread-out
lattice, and then apply carefully constructed block
factors.  We start by proving some more general results
that have applications elsewhere also.

We shift our focus back to processes on $\Z^d$.  A {\df
factor map} is a measurable map $F:\R^{\Z^d}\to\R^{\Z^d}$
between configurations that is translation-equivariant,
i.e.\ that commutes with the action of every translation of
$\Z^d$. Isometry-equivariance is defined analogously.  If
$X=F(Y)$ for a factor map $F$ then we say that $X$ is a
factor of~$Y$.  Finitary factors and coding radius are
defined as in the introduction.  A {\df block factor} map
is a finitary factor map whose coding radius is bounded
above, i.e.\ $R\leq k$ a.s.\ for some deterministic
$k<\infty$. Recall that $R_v\colonequals R\circ \theta^{-v}$ denotes
the coding radius at vertex~$v\in\Z^d$.

We say that a non-negative random variable $R$ has {\df
tower tails} if it satisfies $\P(R>r)<1/\tower(cr)$ for all
$r>0$ and some $c\in(0,\infty)$.  We call a process {\df
tower ffiid} if it is ffiid and its coding radius has tower
tails.  The following simple fact will be used extensively.
\begin{lemma}[Block factors]\label{block-of-tower}
If $X$ is a tower ffiid process on $\Z^d$ then any block
factor of~$X$ is tower ffiid.
\end{lemma}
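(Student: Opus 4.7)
The plan is to realize the block factor as a factor of the underlying iid process and then bound its coding radius by a direct union bound. Writing $X=F(Y)$ with $F$ an ffiid factor map whose coding radius $R$ has tower tails (say $\P(R>r)<1/\tower(cr)$ for all $r>0$), and $X'=G(X)$ with $G$ a block factor map of deterministic coding radius at most $k<\infty$, we have $X'=(G\circ F)(Y)$, which exhibits $X'$ as a factor of $Y$. It therefore suffices to control the resulting coding radius, which I will call $R'$.

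The key pointwise bound is
\[R' \leq k + \max_{v\in B(k)} R_v,\]
which I would derive by observing that $X'_0=G(X)_0$ is determined by $X_v$ for $v\in B(k)$, and each such $X_v$ is in turn determined by $Y$ restricted to the ball of radius $R_v$ centered at $v$, which lies inside $B(k+R_v)$. Applying stationarity of $(R_v)_{v\in\Z^d}$ and a union bound, for $r\geq k$ one gets
\[\P(R'>r) \leq |B(k)|\,\P(R>r-k) < \frac{|B(k)|}{\tower(c(r-k))}.\]

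It remains to absorb the prefactor $|B(k)|$ and the shift by $k$ into a slightly smaller constant $c'>0$, so that the right-hand side is dominated by $1/\tower(c'r)$. This is the only quantitative step, and it is essentially formal: one additional level of exponentiation in $\tower$ easily swallows any fixed constant factor, while slightly shrinking $c$ compensates for the additive shift (and the floor convention renders the bound trivial for bounded $r$). I do not expect any substantive obstacle; the content of the lemma is the observation that tower tails are stable under composition with block factors, because $\exp\tower(s)$ eventually dominates $N\cdot\tower(s)$ for any fixed constant $N$.
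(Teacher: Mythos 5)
Your proposal follows essentially the same route as the paper's proof: compose the factor maps, observe that $R' \leq k + \max_{v\in B(k)} R_v$, apply a union bound over the $O(k^d)$ sites of $B(k)$, and absorb the polynomial prefactor and additive shift into a new constant in the tower function. The argument is correct and matches the paper's.
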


\begin{proof}
Let $X$ be a tower factor of the iid process $Y$, and let
$W$ be a block factor of~$X$, with coding radius bounded
above by $k$.  Clearly $W$ is a factor of~$Y$. Write $R$
for the coding radius of~$X$, and as usual let $R_v$ be the
coding radius at $v\in\Z^d$. If $R'$ denotes the coding
radius of~$W$ viewed as a factor of~$Y$, then
\[\P(R'>r)\leq\P\Bigl[\bigcup_{v\in B(k)} \{R_v>r-k\}\Bigr]\leq
\frac{c_1 k^d}{\tower(c_2 (r-k))} \leq \frac{1}{\tower(c_3r)}\,,\]
for some
constants $c_i=c_i(k,d)\in(0,\infty)$.
\end{proof}

Let $\|\cdot\|_p$ denote the $p$-norm on $\Z^d$, and recall
that we usually work with the $1$-norm
$|\cdot|=\|\cdot\|_1$. For most purposes the distinction is
unimportant, because the norms are equivalent and we are
not concerned with exact constants. However, our
construction of a $4$-coloring will use both the $1$- and
$\infty$-norms.

A process $(X_v)_{v\in \Z^d}$ is a {\df range-$m$} {\df
$q$-coloring} with respect to the $p$-norm if it is
$[q]$-valued, and almost surely $X_u\neq X_v$ whenever
$0<\|u-v\|_p\leq m$.

\begin{corollary}[Long-range coloring]\label{range}\sloppypar
Fix integers $d,m\geq 1$ and a choice of norm
$\|\cdot\|_p$.  There exists a tower ffiid range-$m$
$q$-coloring of~$\Z^d$ with respect to $\|\cdot\|_p$, for
some number of colors $q=q(d,m,p)$.  Moreover, the factor
may be chosen to be isometry-equivariant.
\end{corollary}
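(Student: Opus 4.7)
The plan is to apply \cref{graph} to an auxiliary graph. Let $G = G_{d,m,p}$ be the graph with vertex set $\Z^d$ in which $u$ and $v$ are adjacent iff $0<\|u-v\|_p\leq m$. Its maximum degree $\Delta = \Delta(d,m,p)$ equals the number of nonzero lattice points in the closed $\|\cdot\|_p$-ball of radius~$m$, which is finite. By definition, a proper graph $q$-coloring of $G$ is exactly a range-$m$ $q$-coloring of $\Z^d$ with respect to $\|\cdot\|_p$.

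Applying \cref{graph} to $G$ yields a graph-ffiid $(\Delta+1)$-coloring $X$ whose coding radius $R_v$, measured in graph distance on $G$, has tower tails. To translate to the usual $|\cdot|_1$ coding radius on $\Z^d$, note that any two vertices at graph distance $k$ in $G$ lie within $|\cdot|_1$-distance $kM$, where $M\colonequals\max\{|u|_1:0<\|u\|_p\leq m\}$ is a fixed constant; hence the $|\cdot|_1$ coding radius also has tower tails, with possibly altered constant~$c$---a change absorbed into the definition of tower tails. Setting $q\colonequals\Delta+1$ produces the required ffiid range-$m$ $q$-coloring.

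For isometry-equivariance we invoke the strengthening noted after \cref{graph}: the coloring arises from a single local graph function, so the induced map $F\colon\R^{\Z^d}\to\R^{\Z^d}$ is equivariant under every graph automorphism of $G$. The isometry group of $\Z^d$---translations composed with signed coordinate permutations---preserves every $\|\cdot\|_p$, hence acts by automorphisms of $G_{d,m,p}$, and so $F$ is isometry-equivariant. No step presents a serious obstacle; the essential content is the choice of auxiliary adjacency structure that converts a long-range constraint on $\Z^d$ into the bounded-degree coloring problem already solved by \cref{graph}.
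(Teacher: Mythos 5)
Your proof is correct and takes the same approach as the paper: both apply \cref{graph} to the auxiliary graph $\Z^d_{(m)}$ with edges between vertices at $\|\cdot\|_p$-distance at most $m$, yielding $q=\Delta+1=|\{v\in\Z^d:\|v\|_p\leq m\}|$ colors. The extra details you supply (comparing graph distance in $\Z^d_{(m)}$ with $\Z^d$-distance, and deducing isometry-equivariance from the automorphism-equivariance of the local graph function) are sound but left implicit in the paper's one-line proof.
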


\begin{proof}
This is a special case of \cref{graph}, applied to the
graph $\Z^d_{(m)}$ with vertex set $\Z^d$ and with an edge
between distinct $u,v\in\Z^d$ whenever $\|u-v\|_p\leq m$.
We can take $q\colonequals |\{v\in\Z^d: \|v\|_p\leq m\}|$.
\end{proof}

Let $m\geq 1$ be an integer.  A $\{0,1\}$-valued process
$J=(J_v)_{v\in\Z^d}$ is an {\df $m$-net} with respect to
the $p$-norm if a.s.\ for every vertex $u$ there exists $v$
with $\|u-v\|_p\leq m$ and $J(v)=1$, but there do not exist
distinct vertices $u,v$ with $\|u-v\|_p\leq m$ and
$J(u)=J(v)=1$. In other words, the support of~$J$ is a
maximal independent set in the graph $\Z^d_{(m)}$ defined
in the above proof. In dimension $d=1$, the distance
between any two consecutive $1$'s of an $m$-net lies in the
interval $[m+1,2m+1]$.

\begin{corollary}[Nets]\label{tower-net}
Fix integers $d,m\geq 1$ and a choice of norm
$\|\cdot\|_p$.  There exists a tower ffiid $m$-net on
$\Z^d$.   Moreover, the factor may be chosen to be
isometry-equivariant.
\end{corollary}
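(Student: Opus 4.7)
The plan is to obtain the net as a bounded-radius block factor of a tower ffiid long-range coloring. By \cref{range}, there exists an isometry-equivariant tower ffiid range-$m$ $q$-coloring $X$ of $\Z^d$ with respect to $\|\cdot\|_p$, for some $q=q(d,m,p)$. From $X$ I would construct $J$ by a greedy pass through the color classes in increasing order. Set $J^{(0)}\equiv 0$, and for $c=1,\ldots,q$ define
\[
J^{(c)}_v \colonequals
\begin{cases}
1, & J^{(c-1)}_v=1,\\
1, & X_v=c \text{ and } J^{(c-1)}_u=0 \text{ for every } u \text{ with } \|u-v\|_p\leq m,\\
0, & \text{otherwise.}
\end{cases}
\]
Finally set $J\colonequals J^{(q)}$.

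The verification has two parts. First, no two vertices with $J=1$ lie within $p$-distance $m$ of each other: at step $c$, all newly added vertices share color $c$, hence are pairwise at distance $>m$ by the range-$m$ property of $X$, and each was admitted only when no earlier member of the set lay within distance $m$. Second, $J$ is maximal: if $J_v=0$, then at step $c=X_v$ the vertex $v$ was rejected, so some $u$ with $\|u-v\|_p\leq m$ already had $J^{(c-1)}_u=1$; that $u$ persists in $J$. Together these give exactly the $m$-net property.

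Each update $J^{(c-1)}\mapsto J^{(c)}$ is a radius-$m$ block factor (the new value at $v$ depends only on $X_v$ and on $J^{(c-1)}$ restricted to the $p$-ball of radius $m$ about $v$), so $J$ is a radius-$qm$ block factor of $X$. Since $X$ is tower ffiid, \cref{block-of-tower} implies that $J$ is tower ffiid. Isometry-equivariance is preserved throughout, because both the coloring $X$ and the tie-breaking rule depend only on the $p$-norm and on a total order of the finite color set $[q]$.

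There is no substantive obstacle here; the sole point requiring care is that the palette $[q]$ has a fixed finite size depending only on $d,m,p$, which is precisely what \cref{range} supplies, ensuring that the greedy procedure terminates in boundedly many block-factor steps and thus leaves the tower-tail bound intact via \cref{block-of-tower}.
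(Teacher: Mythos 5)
Your proposal is correct and follows essentially the same route as the paper: both obtain the net as a bounded-radius block factor of a tower ffiid range-$m$ coloring from \cref{range} (greedily selecting a maximal range-$m$ independent set with priority determined by color class), and both invoke \cref{block-of-tower} to preserve tower tails. The only difference is cosmetic: the paper implements the greedy pass by composing the color-elimination maps $\mathcal{E}_1\cdots\mathcal{E}_q$ on $\Z^d_{(m)}$ and then reading off the $1$'s, whereas you track the $\{0,1\}$-indicator directly and process color classes in increasing rather than decreasing order.
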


\begin{proof}
By \cref{range}, let $X$ be a tower-ffiid range-$m$
$q$-coloring.  Let $\mathcal{E}_a$ be the color-elimination
map defined in Section~\ref{tower-general}, for the graph
$\Z^d_{(m)}$ defined in the last proof. Recall that
$\mathcal{E}_a$ attempts to eliminate color $a$ by
replacing it with the smallest color that is absent from
the range-$m$ neighborhood of a vertex.  Now we attempt to
eliminate all colors:
\[Y\colonequals \mathcal{E}_1 \mathcal{E}_2 \cdots \mathcal{E}_q X\,.\]
The resulting process $Y$ is a coloring, and it is tower
ffiid by \cref{block-of-tower} (since $\mathcal{E}_a$ is a
block-factor map).  We claim that $J_v\colonequals
\ind[Y_v=1]$ yields the required $m$-net $J$. Indeed, $Y$
has no two $1$'s within distance $m$, while, if $X_v=a$
say, when we apply the map $\mathcal{E}_a$, the color at
$v$ becomes~$1$ provided there is currently no other $1$
within distance $m$ (and $1$'s remain $1$'s at subsequent
steps).
\end{proof}

In preparation for the proof of \cref{tower} (i) we record
the following simple geometric fact.

\begin{lemma}\label{geom}  Fix a norm.
Let $d\geq 1$ and let $c>0$ be a real constant. For any
$m\geq 1$ and any $m$-net $J$,  the number of $1$'s of~$J$
within distance $c\,m$ of any fixed $u\in\Z^d$ is at most
$C$, where $C$ is a constant depending only on $c$, $d$ and
the norm (not on $m$).
\end{lemma}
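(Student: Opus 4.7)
The plan is to apply a standard volume packing argument in $\R^d$. By the defining property of an $m$-net, any two distinct $1$-vertices $v_1,v_2$ of $J$ satisfy $\|v_1-v_2\|_p > m$. I would extend the $p$-norm to all of $\R^d$ and consider the open $p$-balls $B_p(v,m/2)\subset\R^d$ centered at the $1$-vertices. These are pairwise disjoint: if some $w$ lay in both $B_p(v_1,m/2)$ and $B_p(v_2,m/2)$, then by the triangle inequality $\|v_1-v_2\|_p<m$, contradicting the $m$-net property.

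Now let $N$ denote the number of $1$-vertices of $J$ at $p$-distance at most $cm$ from $u$. For any such vertex $v$, the ball $B_p(v,m/2)$ is contained in $B_p(u,(c+\tfrac12)m)$. Writing $V$ for the $\R^d$-volume of the unit $p$-ball, disjointness yields
\[
N\cdot V(m/2)^d \;\leq\; V\bigl((c+\tfrac12)m\bigr)^d,
\]
so $N\leq (2c+1)^d$. Since this bound does not depend on $m$ (and in fact not on the norm either, since $V$ cancels), the lemma follows with $C=(2c+1)^d$.

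There is no substantive obstacle here; the entire argument is a one-line packing estimate. The only points that need care are that the disjointness of the open $m/2$-balls uses the strict inequality $\|v_1-v_2\|_p>m$ built into the definition of an $m$-net, and that we work with open balls in $\R^d$ rather than in the lattice so that volumes give a clean comparison even for small $m$.
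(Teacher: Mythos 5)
Your proof is correct and is exactly the argument the paper uses: the paper's one-line proof says ``The balls of radius $m/2$ centered at different $1$'s are disjoint; consider their volumes,'' which is precisely your volume packing estimate written out in full. The only remark worth making is the one you already made: since the unit ball volume cancels, the bound $C=(2c+1)^d$ is in fact independent of the choice of norm.
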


\begin{proof}
The balls of radius~$m/2$ centered at different $1$'s are
disjoint; consider their volumes.
\end{proof}

The next lemma enables a $4$-coloring of~$\Z^d$ to be
constructed from a $2$-valued process with bounded clusters
(via the checkerboard construction mentioned in the
introduction). As is customary, we denote by $\Z^d$ the
graph having vertex set $\Z^d$ and an edge between $u$ and $v$
whenever $\|u-v\|_1=1$. If $X$ is a process on $\Z^d$ then
an {\df $a$-cluster} of~$X$ is the vertex set of a
connected component of the subgraph of $\Z^d$ induced by
the (random) set of all $v$ with $X_v=a$. The {\df
diameter} (with respect to the $\infty$-norm) of a set
$A\subseteq \Z^d$ is $\sup\{\|u-v\|_\infty:u,v\in A\}$.

\begin{lemma}[Checkerboarding]\label{checker}
Fix integers $d,b\geq 1$.  Suppose $Y$ is a $[2]$-valued
process on $\Z^d$ in which each cluster has diameter at
most $b$ a.s.  There exists a $4$-coloring of $\Z^d$ that
is a block-factor of~$Y$.  Moreover, if $(U_v)_{v\in\Z^d}$
are iid, uniform on $[0,1]$ and independent of~$Y$, there
exists a $4$-coloring that is an isometry-equivariant
block-factor of the joint process $(Y,U)$.
\end{lemma}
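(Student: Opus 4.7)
The plan is to color each cluster of $Y$ with a checkerboard pattern, using colors $\{1,2\}$ for the $1$-clusters and colors $\{3,4\}$ for the $2$-clusters. Adjacent vertices of $\Z^d$ are then automatically assigned distinct colors: if they lie in the same cluster, the checkerboard separates them (since they differ in one coordinate, their coordinate-sum parities differ); if they lie in different clusters, they must have distinct $Y$-values (else they would be in a common cluster), so their colors come from different pairs. The only nontrivial point is how to select the phase of each checkerboard in a translation-equivariant way.

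For the first statement, I would pick a canonical vertex $w=w(C)$ in each cluster $C$, namely the lexicographically smallest vertex of $C$. For a vertex $v\in C$, the color is determined by $Y_v$ together with the parity of the coordinate-sum of $v-w(C)$ modulo $2$, using colors $\{1,2\}$ if $Y_v=1$ and $\{3,4\}$ if $Y_v=2$. To determine the color at a given vertex $v$, I examine $Y$ on the $\infty$-ball $B_\infty(v,b)$: since clusters have $\infty$-diameter at most $b$, the entire cluster $C$ containing $v$ lies in this ball, and can be identified by a BFS restricted to vertices with $Y$-value equal to $Y_v$ (such a BFS never leaves $C$, hence never leaves $B_\infty(v,b)$). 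Thus the coding radius is at most $b$ (in the $\infty$-norm, or $db$ in the $1$-norm), making this a block factor of $Y$.

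For the isometry-equivariant version, the lexicographic tiebreak is replaced by the $U$-variables: take $w(C):=\arg\max_{u\in C}U_u$, which is well-defined a.s. The color assignment uses the same parity rule as before. Equivariance under translations is automatic from the construction; for reflections and coordinate permutations (which, together with translations, generate all isometries of $\Z^d$), note that the cluster structure, the $\arg\max$, and the parity of the coordinate-sum of $v-w$ are all preserved (a signed coordinate permutation changes the sum by an even integer, so parity is unchanged). The coding radius is the same as before, so this is again a block factor of $(Y,U)$.

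The only step that requires any care is checking that parity is preserved under the generators of the isometry group, which amounts to the elementary observation that sign flips on coordinates change the coordinate-sum by an even integer; the rest of the argument is purely bookkeeping.
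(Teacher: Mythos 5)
Your proof is correct and follows essentially the same approach as the paper: checkerboard each cluster using two colors determined by $Y_v$, with the phase set by a distinguished vertex of the cluster (lexicographic tiebreak for the plain block factor, $U$-argmax for the isometry-equivariant version), and bound the coding radius by the cluster diameter. The only cosmetic differences are that the paper uses the lexicographically largest vertex and color pairs $\{1,3\}$/$\{2,4\}$ (via the formula $X_v = Y_v + 1 + (-1)^{\|v-w\|_1}$) rather than your $\{1,2\}$/$\{3,4\}$.
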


\begin{proof}
We checkerboard each $1$-cluster with $1$'s and $3$'s, and
each $2$-cluster with $2$'s and $4$'s.  More formally, for
$v\in\Z^d$, let $w=w(v)\in\Z^d$ be the lexicographically
largest vertex in the same ($1$- or $2$-)cluster as~$v$.
(Or, for the isometry-equivariant version, let $w$ be the
vertex in the cluster for which $U_w$ is largest.)  Let
$X_v\colonequals Y_v+1+(-1)^{\|v-w\|_1}$. Then $X$ is a block-factor
of $Y$ because the clusters are bounded.
\end{proof}

Finally, our proof of \cref{tower} (i) will require the
following technical lemma.  A {\df slab} is a set of edges
of $\Z^d$ that is an image under some isometry of~$\Z^d$ of
the set
\[\Big\{\langle x,x+e_1\rangle: x\in\{0\}
\times \{1,\ldots, L\}^{d-1}\Big\}\,,\] for some $L>0$, where
$e_1=(1,0,\ldots,0)$ is the 1st coordinate vector. The slab
has {\df direction} $j\in\{1,\ldots,d\}$ if coordinate $j$
is the image of coordinate $1$ under the isometry.  By the
distance between two sets of edges we mean the
distance between their respective sets
of incident sites.

\enlargethispage*{1cm}
\begin{lemma}[Slabs]
\label{slabs} Suppose that $H$ is
 a subgraph of~$\Z^d$ whose edge
set is the union of a collection of slabs, such that no two
slabs of a given direction are within
$\|\cdot\|_\infty$-distance $2$. Each connected component
of~$H$ has $\|\cdot\|_\infty$-diameter at most $1$.
\end{lemma}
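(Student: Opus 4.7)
My plan is to proceed by induction on $d$. The base case $d=1$ is essentially trivial: each direction-$1$ slab is a single edge, the separation hypothesis forces the edges of $H$ to be pairwise well-separated in $\|\cdot\|_\infty$, so each connected component of $H$ consists of a single edge and has $\|\cdot\|_\infty$-diameter $1$.

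For the inductive step I fix a connected component $K\subseteq H$ and a direction $j\in\{1,\dots,d\}$, and aim to show that the $j$th coordinates of vertices in $K$ take at most two consecutive integer values; running this over all $j$ then yields $\|\cdot\|_\infty$-diameter $\leq 1$. The central claim I will prove is that \emph{all direction-$j$ edges appearing in $K$ come from a single direction-$j$ slab}. Assume for contradiction that two distinct direction-$j$ slabs $\sigma\neq\sigma'$ each contribute edges to $K$; by connectedness there is a path in $K$ using one edge from each, and so at some point along this path two \emph{consecutive} direction-$j$ edges lie in different slabs $\tau,\tau'$. The sub-path strictly between them uses only non-direction-$j$ edges, so its $j$-coordinate is constant at some value $c^\ast$. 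This forces $c^\ast\in\{c_\tau,c_\tau+1\}\cap\{c_{\tau'},c_{\tau'}+1\}$, so the $j$-intervals of $\tau$ and $\tau'$ overlap; the $j$-component of the $\|\cdot\|_\infty$-distance between $\tau$ and $\tau'$ is therefore $0$, and the separation hypothesis forces the non-$j$ boxes $B_\tau,B_{\tau'}\subseteq\Z^{d-1}$ to be at $\|\cdot\|_\infty$-distance at least $2$.

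The step where the induction enters is the one I expect to require the most care. The sub-path lies entirely in the hyperplane $H^\ast=\{x\in\Z^d:x^{(j)}=c^\ast\}\cong\Z^{d-1}$ and uses only non-direction-$j$ edges of $H$. Each non-direction-$j$ slab of $H$, when intersected with $H^\ast$, is either empty or becomes a genuine slab in $\Z^{d-1}$ of the same direction (its $\Z^{d-1}$ box is the slice of the original box at $j$-coordinate $c^\ast$, still a product of intervals of the original side length). Restriction to a subset cannot decrease $\|\cdot\|_\infty$-distance, so the separation hypothesis is inherited by these restricted slabs, and the inductive hypothesis applies: every component of the restricted subgraph in $\Z^{d-1}$ has $\|\cdot\|_\infty$-diameter $\leq 1$. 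But the two endpoints of our sub-path lie in a common component of this restricted subgraph, while their non-$j$ coordinates lie in $B_\tau$ and $B_{\tau'}$ respectively, at $\|\cdot\|_\infty$-distance at least $2$. This contradiction proves the claim.

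With the claim in hand, I conclude as follows. If $K$ contains no direction-$j$ edges, then $K$ is connected using only non-direction-$j$ edges and all its vertices share a common $j$-coordinate. Otherwise, let $\sigma_j$ be the unique direction-$j$ slab contributing to $K$; given any $v\in K$, I join $v$ by a path in $K$ to an endpoint of some direction-$j$ edge and then append that edge. Along the resulting path the $j$-coordinate is constant between direction-$j$ edges and takes values in $\{c_{\sigma_j},c_{\sigma_j}+1\}$ at each direction-$j$ edge, hence lies in $\{c_{\sigma_j},c_{\sigma_j}+1\}$ throughout, in particular at $v$. Thus the range in every coordinate direction is at most $1$, completing the proof.
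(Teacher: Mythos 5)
Your proof is correct, but it takes a genuinely different route from the paper's.

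The paper gives a short, direct argument centered at the origin: for each coordinate $j$, the separation hypothesis is used to show (by a one-line pigeonhole on the two possible offsets $s\in\{0,1\}$) that near $0$ there must be two ``empty'' direction-$j$ layers two apart, because otherwise two distinct direction-$j$ slabs would each contain an incident site inside $\{-1,0,1\}^d$ and thus be within $\|\cdot\|_\infty$-distance $2$ of each other. Putting these layers together across all coordinates yields a $2\times\cdots\times 2$ cube containing $0$ whose exterior boundary edges are all absent from $H$, trapping the component of $0$.

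You instead argue by induction on $d$, establishing the stronger structural fact that each component of $H$ can use direction-$j$ edges from at most one direction-$j$ slab. Your key move is to restrict $H$ to the hyperplane $\{x_j=c^\ast\}$, observe that the restricted edge set is again a union of slabs in $\Z^{d-1}$ inheriting the separation hypothesis (since passing to subsets cannot decrease $\|\cdot\|_\infty$-distance, and the overlap of $j$-intervals at $c^\ast$ forces the non-$j$ boxes themselves to be far apart), and apply the inductive hypothesis to the two endpoints of the transitional subpath. This works, and the single-slab-per-direction dichotomy you extract is a clean invariant that immediately gives the diameter bound. The trade-off is that your argument needs the dimensional bookkeeping of the hyperplane restriction and a correct check that the induced slab family satisfies the lemma's hypotheses, whereas the paper's argument is a self-contained four-line local computation. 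Both yield the same conclusion, and your route is a valid and somewhat more structural alternative.

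Two minor presentational points: the separation hypothesis ``not within distance $2$'' means distance strictly greater than $2$, so you get $d_\infty(B_\tau,B_{\tau'})>2$, which you only quote as ``at least $2$''; this is harmless but slightly imprecise. And the ``path using one edge from each'' should really be a walk obtained by concatenating the two slab edges with a connecting path in $K$, since simplicity is neither needed nor guaranteed; again harmless.
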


\begin{proof}
Consider the component of~$0$, and first consider edges in
direction~$1$. The given condition implies that for either
$s=0$ or $s=1$, all of the edges
\[\Bigl\{\langle x,x+e_1\rangle:x\in\{s,s-2\}
\times\{-1,0,1\}^{d-1}\Bigr\}\] are absent from $H$.  Since
similar statements hold for each coordinate, we deduce that
for some cube of $\|\cdot\|_\infty$-diameter $1$ containing
$0$, all the edges on the exterior boundary are absent from
$H$.
\end{proof}

By the {\df box} of radius~$r\in\Z$ centered at $v\in\Z^d$
we mean the $\infty$-norm ball
$\{u\in\Z^d:\|u-v\|_\infty\leq r\}$. The {\df boundary} of
a subset $A$ of~$\Z^d$ is the set of edges incident to a
site in $A$ and a site in $A^C$. The boundary of a box is a
union of a set of $2d$ slabs; we call them the {\df faces}
of the box.

\begin{proof}[Proof of \cref{tower} (i)]
As remarked earlier, the case $d=1$ and $q=3$ already
follows as a special case of \cref{graph}, therefore we
need to construct a $4$-coloring of $\Z^d$ for $d\geq 2$.
By \cref{block-of-tower,checker}, it suffices to construct
a tower ffiid $[2]$-valued process $Z$ with bounded
clusters.

Let $M=M(d)$ be a (large) positive integer to be fixed
later. By \cref{tower-net}, let $J$ be an $M$-net on $\Z^d$
with respect to $\|\cdot\|_\infty$, and let $S\colonequals
\{v\in\Z^d:J(v)=1\}$ be its support. Also, by \cref{range},
let $Y$ be a range-$(4M+3)$ $q$-coloring of $\Z^d$ with
respect to $\|\cdot\|_\infty$ (where we allow $q$ to be
chosen as a function of~$M$).  Take $J$ and $Y$ to be
finitary factors of the same iid process.  We will
construct a process~$Z$ with bounded clusters as a block
factor of~$(J,Y)$.  The coloring~$Y$ will appear in the
construction only in the form of its restriction to~$S$.
(In fact, an alternative variant of the proof would be to
instead use a coloring of the random graph with vertex set
$S$ and with an edge between elements at distance at most
$4M+3$, using \cref{graph}.)

We wish to assign an integer $r(s)\in [M,2M)$ to each
element $s$ of $S$ in such a way that, if we place a box of
radius~$r(s)$ centered at each $s\in S$, then no two faces
of a given direction are within
$\|\cdot\|_\infty$-distance~$2$ of each other. (So that we
can apply \cref{slabs}.)  This will be done iteratively in
the order given by the coloring $Y$.

Assuming radii have been chosen for all $s$ of colors $Y_s<j$ (which is
vacuously true when $j=1$), we will simultaneously choose a radius~$r(s)$ for
each $s\in S$ of color $Y_s=j$ in such a way that no faces of the box of
radius~$r(s)$ centered at $s$ come within distance $2$ of those faces already
chosen. By \cref{geom}, there are at most $C$ elements of~$S$ within
$\|\cdot\|_\infty$-distance $4M+2$ of~$s$, where $C$ is a constant that
depends only on $d$ (not on $M$, $j$, $q$, or $s$).  Any face of an existing
box centered at one of these elements prohibits at most $7$ possible values for
$r(s)$ in $[M,2M)$. Therefore, at most $C'\colonequals 14\, d\, C$ possible
values for $r(s)$ are prohibited by the condition on faces (in particular,
this $C'$ depends only on $d$). Also, since all radii are less than $2M$ but
$Y$ is a range-$(4M+3)$ coloring, the radii $r(s)$ for all those $s\in S$
with color $j$ can be chosen simultaneously without interfering with each
other (i.e.\ without two of them violating the face condition).  Therefore if
we choose $M=C'+1$ then these radii can indeed be chosen for each
$j=1,\ldots, q$ in turn. For definiteness and to ensure
isometry-equivariance, choose each $r(s)$ to be the smallest allowable value
in $[M,2M)$ at the appropriate step.

Now construct a $\{+1,-1\}$-valued process $Z$ as follows.
Any vertex~$v$ is covered by at least one of the boxes
chosen above (since $J$ is an $M$-net), but by only
finitely many.  Let $s=s(v)\in S$ be the center of the one
that has the lowest numbered color in $Y$. Let
$Z_v\colonequals  (-1)^{\|s-v\|_1}$. In other words, each
box is labeled checkerboard-fashion, with the parity
determined by the position of its center, and with
lower-colored boxes taking priority over higher ones. (We
are not using \cref{checker} here, despite the similarity
of the construction!)

Let $G$ be the (random) subgraph of $\Z^d$ in which two
adjacent vertices $u,v$ are connected by an edge if and
only if $Z_u=Z_v$. By the construction of the boxes, $G$ is
a subgraph of a graph $H$ satisfying the conditions of
\cref{slabs}, so each cluster of~$Z$ has
$\|\cdot\|_\infty$-diameter at most $1$, as required.

In each step $1,\ldots,q$ of the above procedure, a site
$s\in S$ only needed to examine $S$, $Y$, and the earlier
choices of radii within a neighborhood of radius~$4M+2$ in
order to determine its radius~$r(s)$. Thus the entire
procedure constitutes a block-factor map from $(J,Y)$ to
$Z$ (and indeed it is an isometry-equivariant map).
Therefore \cref{block-of-tower} gives that $Z$ is tower
ffiid.
\end{proof}

We note that the above argument actually gives the
following fact.  This has been used in \cite{hl} to prove
the existence of $k$-dependent $4$-colorings of~$\Z^d$ for
all $d\geq 2$.

\begin{corollary}
Let $d\geq 1$.  There exists $m$ such that for any $q$
there exists a block factor map $F$ with the following
property.  If $X$ is a range-$m$ $q$-coloring of~$\Z^d$
then $F(X)$ is a $4$-coloring of~$\Z^d$.  In addition, if
$(U_v)_{v\in\Z^d}$ are iid uniform on $[0,1]$ and
independent of~$X$ then similarly there exists an
isometry-equivariant block factor map $F'$ such that
$F'((X,U))$ is a $4$-coloring of~$\Z^d$.
\end{corollary}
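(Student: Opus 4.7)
The plan is to recognize that the corollary is essentially a repackaging of the proof of Theorem~\ref{tower}~(i): that proof built the $4$-coloring by applying a chain of block-factor operations to a pair $(J,Y)$ consisting of an $M$-net $J$ with respect to $\|\cdot\|_\infty$ and a range-$(4M+3)$ coloring $Y$, where $M=M(d)$ depends only on the dimension. The only use made of the underlying iid process was to produce $J$ and $Y$. Thus, for $d\geq 2$, it suffices to exhibit both $J$ and a range-$(4M+3)$ coloring as block factors of a single input coloring $X$. The case $d=1$ is handled separately and easily: any range-$1$ $q$-coloring of~$\Z$ can be reduced to a $3$-coloring (hence a $4$-coloring) by successive application of the color-elimination maps $\mathcal{E}_q\mathcal{E}_{q-1}\cdots\mathcal{E}_4$ in the graph $\Z^1_{(1)}=\Z$.

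Fix $d\geq 2$ and let $M=M(d)$ be as in the proof of Theorem~\ref{tower}~(i), and set $m:=4M+3$. Given a range-$m$ $q$-coloring $X$ of $\Z^d$ with respect to $\|\cdot\|_\infty$, I would use $X$ itself in place of~$Y$ (it is trivially a range-$(4M+3)$ coloring), and produce $J$ from $X$ exactly as in the proof of Corollary~\ref{tower-net}: set
\[ J_v := \ind\bigl[(\mathcal{E}_1\mathcal{E}_2\cdots\mathcal{E}_q X)_v = 1\bigr], \]
where each $\mathcal{E}_a$ is applied with respect to the graph $\Z^d_{(M)}$. Since $m\geq M$, the coloring~$X$ is a valid input for these color-elimination maps, and the argument of Corollary~\ref{tower-net} shows that the resulting $J$ is an $M$-net. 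Each $\mathcal{E}_a$ is a radius-$M$ block-factor map, so the map $X\mapsto J$ is a block factor of coding radius at most $qM$.

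Having thus obtained both $J$ and (trivially) $X$ as block factors of $X$, I would then apply the remainder of the proof of Theorem~\ref{tower}~(i) verbatim to the pair $(J,X)$: iteratively assign radii $r(s)\in[M,2M)$ to the support of~$J$ in the order prescribed by $X|_S$, always choosing the smallest admissible radius under the face-separation condition; form the $\{+1,-1\}$-valued process $Z$ by box-centered checkerboarding with priority to lowest-colored boxes; and finally apply Lemma~\ref{checker} to $Z$ to produce a $4$-coloring. Each of these steps is a block factor with coding radius bounded by a constant depending only on $d$ (using Lemma~\ref{slabs} to bound the $\infty$-diameter of the clusters of $Z$), so their composition yields the desired~$F$. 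For the isometry-equivariant $F'$, all intermediate steps are already isometry-equivariant deterministic functions of $X$ (the radius choices are made by ``smallest allowable'', and the $Z$-construction uses color priority); only the final checkerboarding needs a symmetry-breaking device, supplied by the iid variables $(U_v)$ via the isometry-equivariant form of Lemma~\ref{checker}.

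There is no real obstacle here; the argument is bookkeeping. The key observation is the dual role played by the long-range coloring~$X$: it serves both as the auxiliary coloring~$Y$ directly, and, via color elimination, as the source of the net~$J$.
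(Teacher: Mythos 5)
Your proof follows essentially the same approach as the paper: take $m=4M+3$, use $X$ directly in the role of the auxiliary coloring $Y$, and obtain the $M$-net $J$ by the color-elimination construction of Corollary~\ref{tower-net} applied to the graph $\Z^d_{(M)}$, noting that a range-$(4M+3)$ coloring is in particular a range-$M$ coloring. Your explicit handling of the $d=1$ case is a small bonus beyond the paper's terse proof, which references the proof of Theorem~\ref{tower}~(i) for $d\geq 2$ and leaves $d=1$ implicit.
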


\enlargethispage*{1cm}
\begin{proof}
We take $m=4M+3$ in the proof of \cref{tower} (i) above.
Since a range-$(4M+3)$ coloring is also a range-$M$
coloring, the construction in the proof of \cref{tower-net}
gives us an $M$-net $J$ as a block factor of~$X$, and we
also take $Y=X$.
\end{proof}

\section{Shifts of finite type}

In this section we prove \cref{sft}, for which we will use
the following construction.  Let $S=S(q,k,W)$ be a shift of
finite type on~$\Z$.  Let $G=G_S$ be the directed graph
with vertex set $W$, and with a directed edge from
$u=(u_1,\ldots,u_k)$ to $v=(v_{1},\ldots,v_k)$ if and only
if $(u_2,\ldots,u_k)=(v_{1},\ldots,v_{k-1})$.  For any
$x\in[q]^{\Z}$, clearly we have $x\in S$ if and only if the
sequence $\bigl((x_{i+1},\ldots,x_{i+k})\bigr)_{i\in\Z}$
forms a directed (bi-infinite) path in $G$.

\begin{proposition}[Shifts of finite type from nets]\label{net-sft}
Let $S$ be a non-lattice shift of finite type on~$\Z$.
There exist an integer $m\geq 1$ and a block-factor map $F$
such that if $J$ is an $m$-net then $F(J)$ belongs to $S$
a.s.
\end{proposition}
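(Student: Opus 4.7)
The plan is to use the non-lattice hypothesis to identify a distinguished word $w^\ast\in W$ with $\gcd T(w^\ast)=1$, and then build $F(J)$ by placing a copy of $w^\ast$ at each $1$ of the net $J$ and filling each gap between consecutive $1$'s with a deterministically chosen directed path from $w^\ast$ back to $w^\ast$ in $G_S$ of the appropriate length.

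First I would verify that $T(w^\ast)$ is an additive sub-semigroup of $\Z^+$: given two configurations in $S$ witnessing return times $s$ and $t$ of $w^\ast$, one can splice them along the overlapping copy of $w^\ast$ to produce a configuration in $S$ that witnesses return time $s+t$. Combined with $\gcd T(w^\ast)=1$, the Sylvester--Frobenius theorem for numerical semigroups yields an integer $t_0$ with $\{t_0,t_0+1,\ldots\}\subseteq T(w^\ast)$. Choose $m\colonequals t_0$, so that every integer in $[m+1,2m+1]$ belongs to $T(w^\ast)$. For each such $\ell$, fix once and for all a directed path in $G_S$ from $w^\ast$ to $w^\ast$ of length $\ell$ (say the lexicographically smallest), and let $W_\ell\in[q]^{\ell+k}$ denote the word it traces out; by construction $W_\ell$ begins with $w^\ast$ and ends with $w^\ast$.

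Given an $m$-net $J$ on $\Z$, consecutive $1$'s of $J$ lie at distances in $[m+1,2m+1]$. For $i\in\Z$, let $a\leq i<b$ be the two $1$'s of $J$ bracketing $i$, and set
\[ F(J)_i\colonequals W_{b-a}(i-a). \]
Because $W_{b-a}$ ends with $w^\ast$ and the next bridge word $W_{b'-b}$ begins with $w^\ast$, consecutive bridges agree on their length-$k$ overlap, so $F(J)$ is consistently defined. To check $F(J)\in S$, observe that the consecutive $k$-tuple $(F(J)_{i+1},\ldots,F(J)_{i+k})$ fits inside the extended range $[a,b+k-1]$ of the bridge bracketing $i$, and so coincides with a vertex along the associated path in $G_S$, hence lies in $W$ by definition of $G_S$.

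Finally, $F$ is a block-factor map with coding radius at most $2m+1$: the bracketing pair $(a,b)$ can be located from $J$ restricted to $[i-2m,i+2m+1]$, and translation equivariance is automatic because $W_\ell$ depends only on $\ell=b-a$. The only nontrivial input is the Sylvester--Frobenius step showing that $T(w^\ast)$ is cofinite in $\Z^+$, which is precisely where the non-lattice hypothesis is used; everything else is a mechanical construction in $G_S$.
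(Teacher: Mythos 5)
Your proof is correct and follows essentially the same approach as the paper: pick $w^\ast\in W$ with $\gcd T(w^\ast)=1$, use additive closure of the recurrence set plus the Chicken McNugget / Sylvester--Frobenius principle to make every length in $[m+1,2m+1]$ achievable as a return time, fix one bridging path per length, and paste these between consecutive $1$'s of the net. The only cosmetic difference is that the paper first constructs a $W$-valued ($k$-tuple-valued) process $Z$ along the path and then projects to the first coordinate, whereas you work directly with the words $W_\ell\in[q]^{\ell+k}$ and index into them; these are equivalent.
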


\begin{proof}
Let $S=S(q,k,W)$ and let $G=G_S$ be the directed graph
defined above. For $w\in W$, the set of recurrence times
$T(w)$ is precisely the set of positive integers $t$ for
which there exists a (not necessarily self-avoiding)
directed cycle of length $t$ in $G$ that contains the
vertex~$w$.  Suppose that the greatest common divisor
of~$T(w)$ is~$1$.   Since $T(w)$ is closed under addition,
it is a standard fact of number theory that there exists
some $m$ such that $T(w)$ contains all integers greater
than~$m$.

Therefore, for each integer $t\in[m+1,2m+1]$, we can fix a
directed cycle of~$G$ of length~$t$ containing~$w$. Let
$w=y^t_0,t^t_1,\ldots,y^t_t=w$ be its vertices in order.
Let $J$ be an $m$-net.  Construct a $W$-valued process $Z$
from $J$ as follows.  For each $i\in\Z$ with $J_i=1$, let
$Z_i=w$. If $i<j$ are the locations of two consecutive
$1$'s in $J$, let $t=j-i\in[m+1,2m+1]$, and let
$(Z_i,\ldots,Z_j)=(y^t_0,t^t_1,\ldots,y^t_t)$.  Finally,
define a process $X$ by letting $X_i$ be the first entry of
the $k$-vector $Z_i$ for each $i\in\Z$.  Clearly $X\in S$,
and $X$ is a block factor of~$J$ because the intervals
between $1$'s of~$J$ have bounded lengths.
\end{proof}

\begin{proof}[Proof of \cref{sft} (i)]\sloppypar
This follows immediately from \cref{tower-net,net-sft}.
\end{proof}

We note that our argument yields the following, which is
used in \cite{hl}.

\begin{corollary}
Let $S$ be a non-lattice shift of finite type on~$\Z$.
There exist $m$ such that for any $q$, there exists a
block-factor map $F$ such that if $X$ is a range-$m$
$q$-coloring of~$\Z$ then $F(J)$ belongs to $S$.
\end{corollary}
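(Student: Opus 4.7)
The plan is to mirror the proof of the analogous corollary at the end of Section 4, replacing the target (a $4$-coloring) with a configuration in $S$. The two building blocks are already available: \cref{net-sft} converts an $m$-net into a configuration in $S$ via a block-factor map, and the net-construction from the proof of \cref{tower-net} converts a range-$m$ coloring into an $m$-net via a block-factor map. Chaining these yields the required $F$.

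First I would apply \cref{net-sft} to the non-lattice shift of finite type $S$ to obtain an integer $m\geq 1$ and a block-factor map $F_0$ such that $F_0(J)\in S$ a.s.\ whenever $J$ is an $m$-net on~$\Z$. This $m$ is the one claimed by the corollary. Now, given any $q$ and any range-$m$ $q$-coloring $X$ of~$\Z$, I would define
\[Y\colonequals \mathcal{E}_1\mathcal{E}_2\cdots\mathcal{E}_q X,\]
where each $\mathcal{E}_a$ is the color-elimination map with respect to the graph $\Z_{(m)}$ (vertex set $\Z$, with an edge between distinct $u,v$ whenever $|u-v|\leq m$). Each $\mathcal{E}_a$ is a radius-$1$ block-factor map on $\Z_{(m)}$, hence a radius-$m$ block-factor map on~$\Z$, so the composition is a block-factor map. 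Setting $J_v\colonequals \ind[Y_v=1]$, the combinatorial argument from the proof of \cref{tower-net} shows that $J$ is an $m$-net, using only that $X$ is a range-$m$ coloring and not any distributional property. Composing these two block-factor maps, I would take $F(X)\colonequals F_0(J(X))$, which is a block-factor map that satisfies $F(X)\in S$ a.s.\ by construction.

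I do not anticipate a serious obstacle. The only conceptual point worth highlighting is that the $m$-net extraction in the proof of \cref{tower-net} is a \emph{deterministic} block-factor construction whose correctness depends only on the input being a range-$m$ coloring, not on any probabilistic assumption. Everything else is a routine composition of block-factor maps, exactly parallel to the corollary at the end of Section~4.
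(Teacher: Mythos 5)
Your proposal is correct and takes essentially the same approach as the paper: the paper's one-line proof cites \cref{net-sft} together with the proof of \cref{tower-net}, which is exactly the composition you describe. Your additional observation --- that the color-elimination cascade extracting the $m$-net is a deterministic construction valid for any range-$m$ coloring, independent of distributional assumptions --- is correct and is the implicit point the paper is relying on.
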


\begin{proof}
This follows from \cref{net-sft} and the proof of \cref{tower-net}.
\end{proof}

\begin{proof}[Proof of \cref{sft} (ii)]
Suppose that $S=S(q,k,W)$.  If $S$ contains no constant
sequence then the graph $G=G_S$ has no self-loops.  Suppose
$X$ is an ffiid process that belongs to $S$ a.s.  Then the
block process $W=(W_i)_{i\in\Z}$ given by $W_i\colonequals
(X_{i+1},\ldots,X_{i+k})$ is a $q^k$-coloring of $\Z$, and
it is clearly a block factor of~$X$.  Let $R$ be the coding
radius of~$X$, and let $R'$ be the coding radius of $W$
viewed as a factor of the iid process underlying~$X$.
Theorem~\ref{tower} (ii) implies $\P(R'>r)\geq
1/\tower(Cr)$ for all $r$ and some $c$, while as in the
proof of Lemma~\ref{block-of-tower}, \mbox{$\P(R'>r)\leq
k\,\P(R>r-k)$}. Hence $\P(R>r)\geq 1/\tower(C'r)$ for some
$C'=C'(C,k)$.
\end{proof}

Finally in this section we show that a \textit{lattice\/} shift
of finite type admits no ffiid process, as mentioned in the
introduction.  In fact we prove a stronger statement.  A
process $X$ on $\Z$ is called {\df mixing} if for any
events $A$ and $B$ in the $\sigma$-field generated by $X$
we have $\P(A\cap\theta^n B)\to\P(A)\P(B)$ as $n\to\infty$.
(Here, if $A$ is the event $\{X\in \mathcal A\}$ then
$\theta^n A$ is the translated event $\{(X_{i+n})_{i\in
\Z}\in\mathcal A\}$.) The following is a standard fact.

\begin{lemma}\label{mixing}
Any factor of an iid process on $\Z$ is mixing.
\end{lemma}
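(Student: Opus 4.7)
The plan is to reduce the statement to the mixing property of the underlying iid process, and then verify the latter by approximation with cylinder events.

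First I would carry out the reduction. Suppose $X = F(Y)$ where $Y = (Y_i)_{i\in\Z}$ is iid and $F$ is translation-equivariant. Every event $A \in \sigma(X)$ has the form $A = \{X \in \mathcal{A}\} = \{Y \in F^{-1}(\mathcal{A})\}$ and therefore lies in $\sigma(Y)$; write $A' = F^{-1}(\mathcal{A})$ and similarly $B'$ for a second event $B$. By translation-equivariance of $F$, the pullback of $\theta^n B$ is exactly $\theta^n B'$, since $F$ commutes with $\theta^n$. Consequently $\P(A \cap \theta^n B) = \P(A' \cap \theta^n B')$ and $\P(A)\P(B) = \P(A')\P(B')$, so mixing of $X$ follows from mixing of $Y$.

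Next I would establish mixing of the iid process $Y$. On the algebra $\mathcal{C}$ of cylinder events (events measurable with respect to finitely many coordinates) the property is immediate: if $A'$ depends on coordinates in a finite set $F_A \subset \Z$ and $B'$ on coordinates in $F_B$, then for all $n$ with $|n|$ larger than $\diam(F_A) + \diam(F_B)$, the sets $F_A$ and $F_B + n$ are disjoint, so by independence of the coordinates of $Y$, the events $A'$ and $\theta^n B'$ are independent and the mixing identity holds with exact equality. Then to extend from $\mathcal{C}$ to $\sigma(Y)$, given arbitrary $A', B' \in \sigma(Y)$ and $\eps > 0$, choose cylinder events $A_\eps, B_\eps \in \mathcal{C}$ with $\P(A' \triangle A_\eps), \P(B' \triangle B_\eps) < \eps$; such approximations exist because $\mathcal{C}$ generates $\sigma(Y)$ and is closed under finite unions and complements. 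Using
\[
\bigl|\P(A' \cap \theta^n B') - \P(A_\eps \cap \theta^n B_\eps)\bigr| \leq 2\eps
\]
and the analogous inequality for the product of marginals (noting $\P(\theta^n B' \triangle \theta^n B_\eps) = \P(B' \triangle B_\eps)$ by stationarity), taking first $\limsup_{n\to\infty}$ (where the cylinder version gives exact equality for large $n$) and then $\eps \to 0$ yields mixing for all events.

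The main obstacle is the approximation step: one must ensure the cylinder approximations exist with the desired accuracy and that error bounds are uniform in the translation parameter $n$. The uniformity comes cheaply from stationarity (the error in approximating $\theta^n B'$ by $\theta^n B_\eps$ equals that for $B'$ by $B_\eps$), and the existence of cylinder approximations is a consequence of the standard fact that the finite-dimensional algebra generates the product $\sigma$-field, together with the approximation theorem for probability measures on algebras. Beyond these standard facts the argument is entirely mechanical.
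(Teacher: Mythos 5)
Your proof is correct and uses essentially the same approach as the paper: approximate arbitrary events in $\sigma(X)\subseteq\sigma(Y)$ by cylinder events of $Y$, use independence of far-translated cylinders, and invoke stationarity to control the approximation error uniformly in $n$. The paper states this more compactly without first packaging it as ``mixing of $Y$,'' but the underlying argument is identical.
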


\begin{proof}
Suppose $X$ is a factor of the iid process $Y$.  Fix events $A,B\in\sigma(X)$
and any $\eps>0$.  There exist cylinder events $A_\eps,B_\eps$ of~$Y$ such
that $\P(A\triangle A_\eps),\P(B\triangle B_\eps)<\eps$, and by
translation-equivariance, $\P(\theta^n B\triangle \theta^n B_\eps)<\eps$. For
$n$ sufficiently large, $A_\eps$ and $\theta^n B_\eps$ are independent, and
hence $|\P(A\cap\theta^n B)-\P(A)\P(B)|<4\eps$.
\end{proof}

\begin{proposition} \label{no-mixing}
Let $S$ be a lattice shift of finite type on~$\Z$.  There is no mixing
stationary process $X$ for which $X\in S$ a.s.
\end{proposition}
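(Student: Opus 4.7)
The plan is to derive a contradiction from the mixing hypothesis by exploiting the arithmetic obstruction that defines the lattice condition: in a lattice shift of finite type, the recurrence set $T(w)$ of every pattern $w\in W$ is contained in a proper subgroup $d\Z$ of $\Z$.

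Suppose, for contradiction, that $X$ is a stationary mixing process with $X\in S$ a.s., where $S=S(q,k,W)$ is lattice. First I would pick $w\in W$ with $p:=\P[(X_1,\ldots,X_k)=w]>0$; such $w$ exists because $(X_1,\ldots,X_k)\in W$ almost surely and $W$ is finite. Set $A:=\{(X_1,\ldots,X_k)=w\}$, so $\P(A)=p$. The key observation is that for any $n\geq 1$, if $\P(A\cap\theta^n A)>0$ then some configuration in $S$ exhibits the pattern $w$ at both positions $1$ and $n+1$, hence $n\in T(w)$. Since $S$ is lattice, $\gcd T(w)=d\geq 2$, so $T(w)\subseteq d\Z$ and $\P(A\cap\theta^n A)=0$ for every $n\geq 1$ with $d\nmid n$. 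Taking $n\to\infty$ along $n\equiv 1\pmod d$ and applying mixing would force $0=\P(A)^2=p^2>0$, the desired contradiction.

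The only step that deserves care is the implication $\P(A\cap\theta^n A)>0\Rightarrow n\in T(w)$, which uses that $X\in S$ almost surely: any realization in $A\cap\theta^n A\cap\{X\in S\}$ is itself an element of $S$ witnessing $w$ at positions $1$ and $n+1$, as required by the definition of $T(w)$. Beyond this, the argument is purely arithmetic, and I do not foresee any substantial obstacle.
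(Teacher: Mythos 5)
Your argument is correct and takes a genuinely different, and in fact more economical, route than the paper. The paper first invokes ergodicity (a consequence of mixing) to fix a pattern $w$ that a.s.\ recurs infinitely often, observes that its occurrence set lies in a random coset $L + t\Z$ with $t = \gcd T(w)$, uses stationarity to show $L$ is uniform on $[t]$, and then exhibits the non-convergent correlation $\P(A\cap\theta^n A) = \ind[t\mid n]/t$ for the event $A=\{L=t\}$. You instead choose $w$ merely by positive probability, show via $X\in S$ a.s.\ that $\P(A\cap\theta^n A)>0$ forces $n\in T(w)\subseteq d\Z$, and then apply the mixing limit along $n\equiv 1\pmod d$ to get $\P(A)^2=0$ directly. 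This bypasses ergodicity and the auxiliary residue variable $L$ entirely. One small point worth tightening: the lattice hypothesis only says that for each $w$, either $T(w)=\varnothing$ or $\gcd T(w)\geq 2$; your line ``$\gcd T(w)=d\geq 2$'' implicitly assumes $T(w)\neq\varnothing$. This is harmless because if $T(w)=\varnothing$ then $\P(A\cap\theta^n A)=0$ for every $n\geq 1$ and the same mixing contradiction follows even more immediately (alternatively, $T(w)\neq\varnothing$ can be guaranteed by Poincar\'e recurrence, since $\P(A)>0$ and $X$ is stationary); a one-line case distinction would make the writeup airtight.
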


\begin{proof}
Suppose that such an $X$ does exist.  Since $X$ is mixing, it is ergodic.
Hence there exists some $w\in W$ that a.s.\ appears infinitely often in the
process $W$ given by $W_i\colonequals (X_{i+1},\ldots,X_{i+k})$. Fix such a $w$, and let
$t$ be the greatest common divisor of the recurrence set $T(w)$.  Then a.s.\
the random set $\{i\in\Z: W_i=w\}$ lies in $L+t\Z$ for some random $L$ in
$[t]$. Since the set is a.s.\ non-empty, $L$ is measurable with respect to
$\sigma(X)$, and by stationarity $L$ must be uniformly distributed over
$[t]$.  Therefore, letting $A$ be the event that $L=t$, we have $\P(A \cap
\theta^n A)= \ind[t \text{ divides }n]/t$, which does not converge as
$n\to\infty$, contradicting the fact that $X$ is mixing.
\end{proof}

\section{Power law coloring}

In this section we construct ffiid $3$-colorings of~$\Z^d$
for $d\geq 2$ with power law tails, proving
Theorem~\ref{power} (i).  A simpler version of the argument
is available when $d=2$; we give this first.

\begin{proof}[Proof of Theorem~\ref{power} (i), case $d=2$]
First construct a random graph $H$ with vertex set $\Z^2$
by choosing, for each unit square of~$\Z^2$, exactly one of
the two diagonals to be an edge of $H$, with each diagonal
having probability $1/2$, and where the choices are
independent for different squares.  It is of course trivial
to do this as a translation-equivariant block factor of an
iid process indexed by the vertices.  For an
\textit{isometry\/}-equivariant construction one can
proceed as follows.  Let $(U_v)_{v\in\Z^2}$  be iid uniform
on $[0,1]$ and let $(B_v)_{v\in \Z^2}$ be iid uniform on
$\{\pm 1\}$, independent of each other. For a unit square
$s$ define $B'_s\colonequals \prod_{i=1}^4 B_{s_i}$, where
$s_1,\ldots,s_4$ are the vertices (in counterclockwise
order, say).  Then $(B'_s)_s$ is an iid uniform $\pm
1$-valued family indexed by unit squares (as can be seen by
considering in lexicographic order the unit squares that
make up an $n$ by $n$ square, and noting that each is
independent of those preceding~it). Now place an edge
between $s_1,s_3$ if
$(U_{s_1}+U_{s_3}-U_{s_2}-U_{s_4})\,B_s>0$, and otherwise
place it between $s_2,s_4$.

Observe that $H$ is precisely a critical bond percolation
model on the even sublattice of~$\Z^2$ (interpreted as a
copy of~$\Z^2$ rotated by $\pi/4$ and enlarged by
$\sqrt{2}$) together with its planar dual on the odd
sublattice.  See \cref{perc}.  Note that for the purpose of
constructing an ffiid process, it is important that we
treat the even and odd sublattices identically.

\begin{figure}
\includegraphics[width=0.65\textwidth]{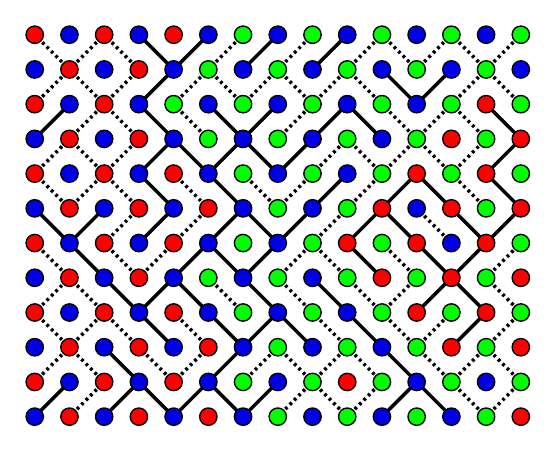}
\caption{Random diagonals, the resulting bond percolation process (solid lines),
its planar dual (dashed lines),
and a corresponding $3$-coloring.}\label{perc}
\end{figure}

Call the connected components of~$H$ {\df clusters}, and
call two clusters {\df adjacent} if some vertex of one is
adjacent in $\Z^2$ to some vertex of the other.  (Adjacent
clusters belong to sublattices of opposite parity, of
course.)  We will assign one of the $3$ colors to each
cluster. This will result in a coloring of~$\Z^2$ provided
adjacent clusters receive distinct colors, as in
\cref{perc}.

All clusters are finite a.s.\ (since there is no
percolation at the critical point $1/2$ of bond percolation
on $\Z^2$ --- see e.g.\ \cite{grimmett}).  For each
cluster~$K$ there is precisely one cluster~$\pi(K)$ that
surrounds $K$ (i.e.\ intersects every infinite path from
$K$) and is adjacent to $K$ (see e.g.\ \cite{grimmett}). We
call $\pi(K)$ the {\df parent} of~$K$, and $K$ a {\df
child} of~$\pi(K)$.  Any two adjacent clusters are parent
and child in exactly one direction.  If $K'=\pi^m(K)$ for
some $m\geq 0$ (where $\pi^m$ denotes the $m$th iterate of
$\pi$) then we say that $K'$ is an {\df ancestor} of~$K$
and that $K$ is a {\df descendant} of~$K'$. Note that each
cluster has infinitely many ancestors but only finitely
many descendants.

Next we assign a label $Y_K$ to each cluster~$K$, in such a
way that conditional on $H$ the labels are iid and uniform
on $\{\pm1\}$. To do this, take $(V_v)_{v\in\Z^2}$ iid
uniform on $[0,1]$ and $(W_v)_{v\in\Z^2}$ iid uniform on
$\{\pm 1\}$, and let $Y_K=W_u$ where $u$ is the vertex
of~$K$ for which $V_u$ is greatest.  Call a cluster~$K$
{\df special} if $Y_K=1$ but $Y_{\pi(K)}=-1$.  Now we
define the coloring.  Assign color $1$ to each special
cluster.  For a non-special cluster~$K$, let $m\geq 1$ be
the smallest positive integer for which the ancestor
$\pi^m(K)$ is special, and assign $K$ color $2$ or $3$
according to whether $m$ is odd or even respectively.

The above clearly gives a coloring.   To check that it is
ffiid and bound the coding radius, note that to determine
the color of the origin, it suffices to examine the various
iid labels of the parent of the most recent special
ancestor of the cluster of the origin, together with those
of all its descendants, and the vertices of~$\Z^2$ within
distance $2$ of these clusters. The coding radius~$R$ is at
most the radius around $0$ of this set of vertices. To
bound $R$, define a family of nested annuli
$A_n\colonequals \{x\in\Z^2: 2^n\leq |x|<2^{n+1}\}$
centered at the origin.  By the Russo-Seymour-Welsh
theorem, the probability that $H$ contains a circuit in the
even sublattice that lies in $A_n$ and surrounds the origin
is bounded strictly away from $0$ as $n\to\infty$, and
similarly for the odd sublattice --- see e.g.\
\cite{grimmett}. Take $p>0$ and $N\geq 1$ such that both
probabilities are at least $p$ for all $n> N$.  Now let
$E_m$ be the event that the following all hold: $A_{4m}$
and $A_{4m+2}$ each contain such a circuit in the even
sublattice, while $A_{4m+1}$ and $A_{4m+3}$ each contain
one in the odd sublattice, and moreover, the cluster that
contains the outermost such circuit in $A_{4m+1}$ is
special.  Now the events $(E_m)_{m\geq 1}$ are independent,
and $\P(E_m)>p^4/4$ if $4m>N$.  If $E_m$ occurs then the
cluster of the origin has a special ancestor whose parent
lies within the ball $B(2^{4m+4})$. Therefore
$\P(R>2^{4m+4}+2)\leq (1-p^4/4)^m$ for $4m>N$, which gives
the claimed power law tail bound.
\end{proof}

Unfortunately, the above method gives only a very small
positive power $\alpha$ in the bound
$\P(R>r)<c\,r^{-\alpha}$.  The best available lower bound
for the Russo-Seymour-Welsh circuit probability $p$ is
roughly $2^{-36}$.   And, even with more elaborate
bookkeeping, the best that can be obtained from the above
argument is $\P(R>2^{m})\leq (1-p/2)^m$, giving
$\alpha\approx p/(2\log 2)$. It would be of interest to
obtain a more reasonable power (either for this
$3$-coloring of~$\Z^2$ or another one).

We now move on to the case of general $d\geq 2$.  The
strategy will be broadly similar to that for $d=2$ above,
but with the following main differences.  We can no longer
use critical percolation together with its planar dual;
instead, we use an iterative procedure to construct a
partition of~$\Z^d$ with a similar tree structure.
However, unlike the percolation clusters, individual sets
of this partition will themselves contain pairs of
neighboring vertices. Therefore, rather than a single
color, each set will be assigned a checkerboard
$2$-coloring comprising $2$ of the $3$ available colors.
This in turn will necessitate a more subtle version of the
family tree coloring procedure. The method of proof is
quite general, and can be applied to other graphs (with an
appropriate number of colors that depends on the graph).

The first part of the construction is deterministic, and
can be done on any graph. (In fact, it can be generalized
to metric spaces.)  Let $G=(V,E)$ be a simple undirected
graph, and let $\delta$ denote graph-distance on~$V$.
Denote the closed ball $B(u,r)\colonequals \{v\in V: \delta(u,v)\leq
r\}$.  As usual, the diameter of a set $S\subseteq V$ is
$\diam (S)\colonequals \sup\{\delta(u,v):u,v\in S\}$, the radius
around a point $u\in S$ is
$\rad_u(S)\colonequals\sup\{\delta(u,v):v\in S\}$, and the (graph)
distance between two sets $S,T\subseteq V$ is
$\delta(S,T)\colonequals\inf\{\delta(s,t):s\in S,\, t\in T\}$.

Here is the construction.  Define \[r_j\colonequals 13^j,\qquad j\geq
1\] and suppose we are given a family of sets
$V_1,V_2,\ldots \subseteq V$.  (In our application below,
the sets will be chosen randomly, in such a way that no two
elements of~$V_j$ are within distance $4\, r_j$ of each
other.) We call elements of~$V_j$ {\df $j$-centers}.  Call
the ball of radius~$r_j$ centered at any $j$-center a {\df
$j$-ball}.  To each $j$-ball we will associate a subset of
$V$, called a {\df $j$-tile}.  The collection of all tiles
will be our partition.

The $1$-tiles are precisely the $1$-balls.  Now assume that
$j$-tiles have been defined for all $j\leq n$, and let
$\mathcal{T}_n$ denote the set of all such tiles.  Let
$\mathcal{G}_n$ be the graph with vertex set
$\mathcal{T}_n$ in which two tiles are neighbors in
$\mathcal{G}_n$ if the distance between them is at most
$2$.  Define an {\df $n$-clump} to be the union of the
tiles that correspond to a connected component of~$\mathcal{G}_n$.
By the $n$-clump of a tile we mean the $n$-clump containing that tile.

Now let $B$ be an $(n+1)$-ball. Let $S_B$ denote the union
of $B$ and all the $n$-clumps that are within distance at
most $2$ from $B$. Define the $(n+1)$-tile $T_B$ to be the
set of all $v\in V$ that are within distance at most $1$
from $S_B$ but are not in $\bigcup \mathcal{T}_n$.  The
$(n+1)$-tiles are all such $T_B$.

At the same time as defining tiles, we impose a family tree
structure on them. Every tile $T'$ of~$\mathcal{T}_n$ that
is a subset of~$S_B$ is declared a {\df child} of~$T_B$,
provided $T'$ was not already declared a child of some
other tile at some earlier stage.  If $T'$ is a child of
$T$ then $T$ is a {\df parent} of~$T'$. \textit{A~priori\/}
a tile might have no parents, or more than one, but we will
see next that for our choice of $V_j$'s the parent is
unique.

Each tile has a {\df center},
defined to be the center of the ball $B$ used to define the
tile $T_B$. (The center is not necessarily an element of
the tile.)

\begin{lemma}[Tiling]\label{tiling}
Let $G=(V,E)$ be an infinite connected graph, let $V_1,V_2,\ldots \subseteq
V$ be sets of centers, and construct tiles as described above.  Suppose that
every $v\in V$ lies in some ball, and that no two $j$-centers are within
distance $4 \,r_j$ (for each $j\geq 1$).  Then the set of all tiles is a
partition of~$V$.  Each tile is non-empty, and has exactly one parent. If
$T,T'$ are distinct tiles neither of which is a child of the other then
$\delta(T,T')>1$. If there is a $j$-tile centered at $v$, then the tile and
its associated $j$-clump are subsets of the ball $B(v,\frac32\, r_j)$, and
are functions of $V_1,\dots,V_j$ restricted to this ball.
\end{lemma}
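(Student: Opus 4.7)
The plan is to prove all four claims by a single strong induction on $j$, bundling at each level a locality assertion, a separation assertion, and the at-most-one-parent property. The growth rate $r_{j+1}=13\,r_j$ is tailored so that every slack quantity introduced in one step---the width-$1$ halo around $S_B$, the distance-$2$ threshold of $\mathcal{G}_j$, and the factor-$\tfrac32$ allowance in the radius bound---can be absorbed, ultimately via the inequality $19\,r_j+5\leq \tfrac{39}{2}\,r_j$, valid for $r_j\geq 13$.

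The inductive hypothesis at level $j$ reads: for every $j'\leq j$, each $j'$-tile $T$ centered at some $v$ is non-empty and lies in $B(v,\tfrac32 r_{j'})$; its $j'$-clump also lies in this ball; both are measurable with respect to the data $V_1,\ldots,V_{j'}$ restricted to that ball; tiles of $\mathcal{T}_{j'}$ are pairwise disjoint; and distinct tiles of $\mathcal{T}_{j'}$ not in a parent-child relation lie at graph-distance $>1$. The base case $j=1$ is immediate, since $1$-tiles are just $1$-balls and the separation $4 r_1$ of $1$-centers forces two distinct $1$-tiles to be at distance at least $2 r_1 > 2$.

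For the inductive step I would first bound a $(j+1)$-tile $T_B$ with $B=B(v,r_{j+1})$: it is the width-$1$ halo of $S_B=B\cup\bigcup\{\text{$j$-clumps within distance $2$ of $B$}\}$, and by the inductive locality each such $j$-clump lies in the $\tfrac32 r_j$-ball around its own $j$-center, itself at distance at most $r_{j+1}+2+\tfrac32 r_j$ from $v$. This gives
\[
T_B\subseteq B\bigl(v,\;r_{j+1}+3+3 r_j\bigr)\subseteq B\bigl(v,\tfrac32 r_{j+1}\bigr).
\]
The separation $4 r_{j+1}$ of $(j+1)$-centers then forces distinct $(j+1)$-tiles to be at distance at least $r_{j+1}>2$, so each $(j+1)$-clump contains a single $(j+1)$-tile; the measurability claim at level $j+1$ reduces to the same arithmetic; and uniqueness of parent is automatic from the ``not already declared'' clause, once one checks that any lower-level tile can appear in at most one $S_B$ across all $(j+1)$-balls (again by separation, since $2r_{j+1}$ exceeds $4+3 r_j$).

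The main obstacle will be bounding the $(j+1)$-clump of $T_B$. The key observation is that any chain of distance-$2$ links in $\mathcal{G}_{j+1}$ emanating from $T_B$, after the $T_B$ endpoint is stripped off, is still a connected subgraph of $\mathcal{G}_j$ and hence lies inside a single $j$-clump $K$; without this \emph{collapse onto one earlier clump}, the chain could a priori wind through many disjoint lower-level clumps. Given the collapse, $K$ sits within distance $3$ of $T_B$, and applying the inductive locality of $j$-clumps to $K$ fits everything inside $B(v,\tfrac32 r_{j+1})$ via the margin encoded in the inequality above. With this in hand, the distance-$>1$ property for non-parent/child pairs at level $j+1$ is a minor unpacking: distinct $(j+1)$-tiles are already at distance $>2$, while a $(j+1)$-tile $T_B$ with a lower-level tile $T'$ at distance $\leq 1$ forces $T'\subseteq S_B$, and by the cross-level uniqueness above $T'$ is a child of $T_B$. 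The partition property follows because any $v\in V$ lies in a ball by hypothesis, so $v$ enters a tile no later than at that level, while disjointness is built into the set-difference defining $T_B$; finally, existence of a parent for each $j$-tile $T$ uses the covering hypothesis applied to a point of $T$, which lies in some higher-level ball and triggers the absorption of $T$ at the latest at that level.
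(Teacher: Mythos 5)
Your overall strategy is the same as the paper's: run a single induction on $j$ whose engine is the radius bound $\tfrac32 r_j$ for a $j$-tile and its $j$-clump around its center, exploiting $r_{j+1}=13\,r_j$ via the inequality $r_{j+1}+6r_j+5\le\tfrac32 r_{j+1}$ (your ``$19r_j+5\le\tfrac{39}{2}r_j$''), and then harvest disjointness, uniqueness of parent, and the distance-$>1$ separation from it. The paper packages the clump bound more directly: it defines $\widehat T_B$ as the union of $T_B$ with the $j$-clumps within distance $2$ of $T_B$, proves $\rad_u(\widehat T_B)<\tfrac32 r_{j+1}$, derives that distinct $\widehat T$'s are at distance $>2$, and then concludes $\widehat T_B$ is the $(j+1)$-clump. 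Your ``collapse onto one earlier clump'' via stripping $T_B$ off a chain in $\mathcal{G}_{j+1}$ is a reorganization of the same step rather than a different route.

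There are, however, three genuine gaps.

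\emph{(a) The collapse step is incomplete.} You write that a chain in $\mathcal{G}_{j+1}$ emanating from $T_B$, once $T_B$ is removed, ``is still a connected subgraph of $\mathcal{G}_j$ and hence lies inside a single $j$-clump.'' This presupposes that the chain traverses no other $(j+1)$-tile, which is exactly what needs ruling out to bound the clump; as stated it is circular. It can be repaired (take a shortest such chain to a hypothetical second $(j+1)$-tile $T_{B'}$; the interior is then entirely lower-level, collapses to one $j$-clump $K$, forcing both $\widehat T_B$ and $\widehat T_{B'}$ to contain $K$ and contradicting their separation), but that argument is not in your sketch.

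\emph{(b) Existence of a parent is argued incorrectly.} You take a point of a $j$-tile $T$ and assert it ``lies in some higher-level ball.'' That is not implied by the hypothesis: a vertex $v\in T$ need only lie in \emph{some} ball, and that ball may well be exactly the $j$-ball that produced $T$; $v$ need belong to no ball of level $>j$. The paper sidesteps this by using infiniteness and connectivity of $G$ to find a vertex $w$ at distance $1$ from the $j$-clump of $T$ with $w\notin\bigcup\mathcal{T}_j$; such a $w$ is forced to lie in a ball of level $>j$, and then $T\subseteq S_B$ for that ball $B$. Your version omits the crucial exit point outside all level-$\le j$ tiles.

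\emph{(c) Non-emptiness of tiles is asserted in the inductive package but never established.} The issue is that $T_B=(\text{$1$-halo of }S_B)\setminus\bigcup\mathcal{T}_{n-1}$ could a priori be empty. The paper's argument uses that $B$ is connected and has diameter $2r_n$, whereas each connected component of $\bigcup\mathcal{T}_{n-1}$ lies in an $(n-1)$-clump and hence has diameter at most $3r_{n-1}<2r_n$, so $B\not\subseteq\bigcup\mathcal{T}_{n-1}$. Your sketch does not supply an argument here.
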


\begin{proof}
The key step is to prove by induction that the diameter of a $j$-clump is at
most $3\, r_j$. This certainly holds for $j=1$.  Assume that it holds for
$j=n$. Let $B$ be an $(n+1)$-ball with center $u$.  Recalling the definition
of the associated tile $T_B$, we can bound its radius:
\[\rad_u(T_B) \leq r_{n+1} + 2 + 3\, r_n + 1\,.\]
Let $\widehat T_B$ be the union of $T_B$  with all the
$n$-clumps that are within distance at most $2$ from $T_B$.
Then
\[\rad_u(\widehat T_B) \leq \rad_u(T_B) + 2 + 3\, r_n \leq r_{n+1} + 6\, r_n
+5
 < \tfrac 32 \, r_{n+1}\,,\]
by our choice of~$r_j$.  For distinct $(n+1)$-balls
$B_1,B_2$, the centers are at distance at least $4\,
r_{n+1}$, therefore $\delta(\widehat T_{B_1},\widehat
T_{B_2})> (4-2\cdot\tfrac32)r_{n+1}=r_{n+1}>2$. It follows
that the $(n+1)$-clump of~$T_B$ is $\widehat T_B$, and
hence that this clump has diameter at most  $3\, r_{n+1}$.
This completes the induction.

From the above inequality, in fact the radius of $T_B$'s clump $\widehat T_B$
is at most $\tfrac32\, r_{n+1}$, and by the construction of $T_B$, the tile
and the clump are functions of $V_1,\dots,V_j$ restricted to the ball of
radius $\tfrac32\,r_{n+1}$ centered at $u$,
 as claimed.

Now, if $v$ lies in an $n$-ball $B$ then either $v$ lies in
$T_B$, or it lies in some tile of~$\mathcal{T}_{n-1}$. Thus
every $v$ lies in some tile. On the other hand, we showed
above that any two $n$-tiles are disjoint (and in fact are
at distance greater than $2$), while by the construction,
an $n$-tile is disjoint from $\bigcup\mathcal{T}_{n-1}$.
Thus the tiles partition~$V$.

To see that the $n$-tile $T_B$ is non-empty, recall that
$T_B\supseteq B\setminus \bigcup \mathcal{T}_{n-1}$.  But
we cannot have $\bigcup \mathcal{T}_{n-1}\supseteq B$,
because $B$ is connected, while each component of $\bigcup
\mathcal{T}_{n-1}$ lies in an $(n-1)$-clump, and thus has
strictly smaller diameter than $B$.

Let $T,T'$ be distinct tiles neither of which is a child of
the other.  As remarked above, if both are $n$-tiles then
$\delta(T,T')>2>1$.  On the other hand, if $T=T_B$ is an
$n$-tile and $T'\in\mathcal{T}_{n-1}$ then, by the
definition of~$S_B$, either $T'\subseteq S_B$ or
$\delta(T',S_B)>2$.  In the former case, $T'$ was already
assigned a parent before stage $n$, and thus all vertices
of $V$ that are within distance $1$ of $T'$ lie in $\bigcup
\mathcal{T}_{n-1}$, so $\delta(T,T')>1$.  In the latter
case, the definition of~$T_B$ implies that $\delta(T,T')>1$
also.

 If $B$ is an $n$-ball, then $S_B$ is contained in the
$n$-clump of $T_B$, but we showed above that for distinct
$n$-balls $B_1$ and $B_2$, the clumps of $T_{B_1}$ and
$T_{B_2}$ are disjoint.  Thus any tile has at most one
parent.  It remains to show that an $n$-tile $T$ has at
least one parent. Since $G$ is infinite and connected but
the $n$-clump of~$T$ is bounded, there exists $w\in V$ that
is at distance $1$ from the clump but not in~$\bigcup
\mathcal{T}_n$.  This $w$ lies in some ball $B$, which must
be a $m$-ball for some $m>n$ (otherwise $w$ would lie in
$\bigcup \mathcal{T}_n$), and thus $S_B$ contains $T$.
Hence either $T_B$ is the parent of~$T$, or another tile
was declared the parent of~$T$ before $T_B$ was
constructed.
\end{proof}

As before, we write $\pi(T)$ for the parent of a tile $T$.
If $T'=\pi^m(T)$ for some $m\geq 0$ then we call $T'$ an
{\df ancestor} of~$T$, and $T$ a {\df descendant} of~$T'$.
Let $\mathcal{F}$ denote the graph whose vertices are the
tiles, and where two tiles are adjacent if they are at
distance $1$. Thus $\mathcal{F}$ is a forest with exactly
one end per component.  (In our application below,
$\mathcal{F}$ will actually be a tree.)

In order to bound the coding radius of our coloring, we
need the following additional property.

\begin{lemma}\label{descendant}
Assume the conditions of \cref{tiling}.  If the ball $B$
contains the vertex $v$ then some descendant of tile~$T_B$
contains the vertex~$v$.
\end{lemma}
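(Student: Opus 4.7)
The plan is to chase the ancestor chain of the tile containing $v$ up to a level $\geq n$ and show that the first such ancestor must be $T_B$ itself. Let $B$ be an $n$-ball with center $u$, let $T^{(0)}$ be the tile containing $v$ (which exists by the Tiling Lemma), and form the chain $T^{(k)} = \pi^k(T^{(0)})$ for $k \geq 0$; since a parent always has strictly greater level than its child, the levels $i_0 < i_1 < \cdots$ increase strictly, and there is a smallest $k^*$ with $i_{k^*} \geq n$. It suffices to show $T^{(k^*)} = T_B$.

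Before the main argument I would establish an auxiliary containment: for any $m$-tile $T$ centered at $w$, every descendant of $T$ is contained, as a subset of $V$, in the $m$-clump $\widehat T$, and hence by the Tiling Lemma in $B(w,\tfrac32 r_m)$. This is proved by induction on the descendant's level. For children of $T=T_{B^T}$, the Tiling Lemma's proof records $S_{B^T}\subseteq\widehat T$, so children lie in $\widehat T$. For deeper descendants, we use that if $T'$ is a child of level $i$, then $\widehat{T'}$ (the $i$-clump of $T'$) sits inside $\widehat T$: because $\mathcal G_i$ is an induced subgraph of $\mathcal G_m$, the $i$-component of $T'$ is contained in the $m$-component of $T'$, which equals $\widehat T$ since $T'\in\widehat T$.

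In the case $i_{k^*}=n$, the tile $T^{(k^*)}=T_{B'}$ is an $n$-tile with some center $u'$, and $v$ is its descendant, so the auxiliary fact gives $\delta(v,u')\leq\tfrac32 r_n$; combined with $\delta(v,u)\leq r_n$ this yields $\delta(u,u')\leq\tfrac52 r_n<4r_n$. The hypothesis that distinct $n$-centers are at distance at least $4r_n$ forces $u'=u$, whence $T^{(k^*)}=T_B$, as desired.

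It remains to rule out $i_{k^*}>n$. If this held, $T^{(k^*-1)}$ would have level $\leq n-1$ and be unparented at the start of stage $n$. Since $T^{(0)}$ is a descendant of $T^{(k^*-1)}$, the auxiliary fact together with the $\mathcal G_{i_{k^*-1}}\subseteq\mathcal G_{n-1}$ inclusion places $T^{(0)}$ in the $(n-1)$-clump $C$ of $T^{(k^*-1)}$. Then $v\in T^{(0)}\cap B\subseteq C\cap B$ gives $\delta(C,B)=0$, so $C\subseteq S_B$ and hence $T^{(k^*-1)}\subseteq S_B$. But then the stage-$n$ construction would assign $T_B$ as the parent of $T^{(k^*-1)}$, contradicting $i_{k^*}>n$. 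The heart of the argument is precisely this auxiliary containment, which propagates the hierarchy of clumps along the family tree; once it is in hand, the rest is bookkeeping with the radii $r_j$.
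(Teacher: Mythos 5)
Your proof is correct, and it rests on the same underlying structural facts (the family tree of tiles, clump nesting, and the $\frac32\,r_j$ radius bound from the Tiling Lemma), but you organize the argument differently from the paper. The paper proceeds bottom-up: it observes that the tile containing $v$ and each subsequent ancestor stays inside $S_B$, iterating until the chain hits $T_B$. You instead isolate a reusable auxiliary fact (every descendant of an $m$-tile is contained in that tile's $m$-clump) and then run a case split on the first ancestor $T^{(k^*)}$ with level $\geq n$: if the level equals $n$, the auxiliary containment together with the $4r_n$ separation of $n$-centers forces $T^{(k^*)}=T_B$; if the level exceeds $n$, you derive a contradiction by showing $T^{(k^*-1)}$ would already have been adopted by $T_B$ at stage $n$. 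This second case is something the paper's terse ``and so on, eventually we conclude'' passes over silently --- your version makes explicit why the parent chain cannot skip level $n$, which is a genuine gain in rigor. One small terminological wrinkle: the auxiliary containment is more naturally proved by induction on the generation gap (or the ancestor's level), not ``the descendant's level'' as you wrote, since the inductive step reapplies the statement to a child $T''$ of $T$ and its descendant $T'$, both of which keep the same descendant level; the argument underneath is sound, just mislabeled. Overall a correct and somewhat more carefully structured version of the same basic parent-chain argument.
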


\begin{proof}
Suppose $B$ is an $n$-ball. Either $v$ lies in $T_B$ itself, or it lies in
some previously constructed tile $T$ that is a subset of~$S_B$. In the latter
case, either $T$ is a child of~$T_B$, or $T$ was earlier declared a child of
some other tile $\pi(T)=T_{B'}$, say, where $B'$ is an $n'$-ball and $n'<n$.
In that case, $S_{B'}$ contains $T$ (by the definition of child). Since
the $n'$-clump of~$T$ is a subset of the $n$-clump of $T$ (by the
definition of clump), we have that $S_{B'}\subseteq S_B$ and therefore
$\pi(T)\subseteq S_B$. Now we iterate this argument: the parent $\pi^2(T)$
of~$\pi(T)$ is either $T_B$, or it is some other tile constructed after
$\pi(T)$ but before $T_B$, in which case $\pi^2(T)\subseteq S_B$, and so on.
Eventually we conclude that $T_B$ is an ancestor of~$T$.
\end{proof}

\begin{proof}[Proof of Theorem~\ref{power} (i) for general $d\geq 2$]\
We first construct a random tiling of~$\Z^d$.  Define $r_j=13^j$ as above.
For $j\geq 1$, let $W_j$ be a random subset of~$\Z^d$ in which each vertex is
included with probability $r_j^{-d}$, independently for different vertices.
Let $V_j$ be the set of elements of~$W_j$ that have no other element of~$W_j$
within distance $4\, r_j$. Let the sets $(V_j)_{j\geq 1}$ be independent of
each other.  Construct tiles using the sets of centers $(V_j)_{j\geq 1}$ as
described above.  Note that the probability that $v\in\Z^d$ lies in some
$j$-ball is at least $\eta$ for some $\eta=\eta(d)>0$ that does not depend on
$j$.  Therefore, every $v$ lies in some ball, so \cref{tiling} applies.

Let $Q$ be the set of all deterministic colorings of $\Z^d$
that use any $2$ colors from $\{1,2,3\}$. Then $Q$ has $6$
elements, since there are $\binom{3}{2}$ choices of $2$
colors, and $2$ possible checkerboard phases. Consider the
graph $\mathcal{Q}$ with vertex set $Q$, and with an edge
between two colorings if one can be obtained from the other
by exchanging one color for the unused color, together with
a self-loop at each vertex.  Thus $\mathcal{Q}$ is a
hexagon with self-loops, and hence has diameter
$D\colonequals 3$. See \cref{hexagon}. We will assign a
coloring in $Q$ to each tile, and this will result in a
coloring of~$\Z^d$ provided adjacent tiles receive
colorings that are adjacent in $\mathcal{Q}$.  For every
pair of vertices of~$\mathcal{Q}$, fix a canonical shortest
path between them.
\begin{figure}
\begin{tikzpicture}[every node/.style = {draw,inner sep=1pt},
every loop/.style={}]thick,scale=.6]
\setlength{\tabcolsep}{0.4pt} 
\renewcommand{\arraystretch}{.8}
\definecolor{darkgreen}{rgb}{0,0.6,0}
\definecolor{darkred}{rgb}{.91,0,0}
\newcommand{\cR}{{\bf\textcolor{darkred}1}}
\newcommand{\cB}{{\bf\textcolor{blue}2}}
\newcommand{\cG}{{\bf\textcolor{darkgreen}3}}
\newcommand{\chk}[2]{\;\raisebox{-21pt}{{\Large\begin{tabular}{cccc}
                          #1&#2&#1&#2 \\
                          #2&#1&#2&#1 
                        \end{tabular}}}\;}
  \node (12) at (0,0) {\chk{\cR}{\cB}};
  \node (13) at (2,0) {\chk{\cR}{\cG}};
  \node (23) at (3,2) {\chk{\cB}{\cG}};
  \node (21) at (2,4) {\chk{\cB}{\cR}};
  \node (31) at (0,4) {\chk{\cG}{\cR}};
  \node (32) at (-1,2) {\chk{\cG}{\cB}};
  \draw (12) edge[loop left] (12);
  \draw (13) edge[loop right] (13);
  \draw (23) edge[loop right] (23);
  \draw (21) edge[loop right] (21);
  \draw (31) edge[loop left] (31);
  \draw (32) edge[loop left] (32);
  \draw (12) -- (13) -- (23) -- (21) -- (31) -- (32) -- (12);
\end{tikzpicture}
\caption{The graph $\mathcal{Q}$ of checkerboard colorings of~$\Z^2$.
 (A small part of each coloring is shown.)}
\label{hexagon}
\end{figure}

Conditional on the tiling, flip an independent fair coin
for each tile (e.g.\ by flipping a coin for every vertex of
$\Z^d$ and using the coin at the center of the tile). Call
a tile $T$ {\df special} if its coin is Heads but no
ancestor $\pi^m(T)$ with $1\leq m< D$ has Heads. Let $A$ be
the random set of special tiles. For any tile $T$, let
$a(T)$ be its most recent special \textit{strict\/}
ancestor, i.e.\ the tile $\pi^m(T)$ where $m\geq 1$ is the
smallest nonnegative integer for which this tile is
special.  (Such an $m$ exists a.s.)

To each special tile $T$, assign a uniformly random element
$h(T)$ of~$Q$ (again, this can be done via the center). The
idea will be that $h(T)$ will be used to color certain
descendants of~$T$.  However, the phase must be chosen
locally.  Therefore, let $h'(T)$ denote the $2$-coloring
$h(T)$ translated by $u$, where $u$ is the center of~$T$.
Thus, $h'(T)$ is either $h(T)$ or the coloring that results
from exchanging the $2$ colors, according to the parity of
$u$.

We now construct a new function $g$ from the tiles to $Q$.
The idea is that a special tile $T$ tries to force its
descendants to use $h'(T)$, succeeding if they are at least
$D$ levels below, but any special descendants get to take
over this task.

To make this precise, for any tile $T$, we choose a
shortest path in $\mathcal{Q}$ from $h'(a(a(T)))$ to
$h'(a(T))$. Here we again need to be careful with phase:
let $u$ be the center of~$a(a(T))$, and first consider the
canonical path between $h(a(a(T))$ and the translation
of~$h'(a(T))$ by $-u$, then translate all the colorings of
this path by $u$ to obtain a new path.  Let
$h'(a(a(T)))=z_0,z_1,\dots,z_\ell=h'(a(T))$ denote this
path. Now, if the distance from $T$ to $a(T)$ in
$\mathcal{F}$ is $j$, let $g(T)=z_{\min(j,\ell)}$.  We
claim that $g$ is a graph homomorphism from $\mathcal{F}$
to $\mathcal{Q}$.  Indeed, consider the parent
$T'\colonequals \pi(T)$ of $T$. If $T'$ is not special,
then $a(T')=a(T)$, so by the path construction, $g(T)$ and
$g(T')$ are neighbors in $\mathcal{Q}$.  On the other hand,
if $T'$ is special, then $a(T')$ is at distance at least
$D$ from $T'$ in $\mathcal{F}$, so $g(T')=h'(a(T'))$; but
$T$ is at distance $1$ from $a(T)=T'$ in $\mathcal{F}$, so
$g(T)$ is a neighbor of $h'(a(a(T))=h'(a(T'))$, so again
$g(T)$ and $g(T'$) are neighbours in $\mathcal{Q}$.

Now we define $X$ by assigning the checkerboard coloring $g(T)\in Q$ to all
the vertices of the tile $T$. By \cref{tiling}, each edge of the lattice
either connects two vertices in the same tile, or connects a vertex in one
tile to a vertex in that tile's parent. Since the colorings $g(T)$ and
$g(\pi(T))$ are neighbors in $\mathcal{Q}$, they are compatible, so $X$ is in
fact a 3-coloring.

It is immediate from the construction that $X$ is an automorphism-equivariant
factor of the various iid labels.
To check that it is ffiid and bound the coding radius, note
that the color $X_0$ can be determined by examining the
tile $a(\pi^D(T))$ and its descendants, where $T$ is the
tile containing $0$.  For $m\geq 1$, let $E_m$ be the event
that there is a $j$-ball containing $0$ for each of
$j=2Dm,2Dm+1,\ldots, 2D(m+1)-1$, and the coins associated
to the corresponding tiles $T_j$ are Heads for $T_{2Dm+D}$
and Tails for $T_{2Dm+D+1},\ldots,T_{2D(m+1)-1}$. On $E_m$,
tile $T_{2Dm+D}$ is special while tiles $T_{2Dm+1},\ldots,
T_{2Dm+D-1}$ tiles are not, and this is enough to determine
the coloring of tile~$T_{2Dm}$. By \cref{descendant}, tile
$T_{2Dm}$ has a descendent containing $0$. But all the
descendants of a tile are in its clump, so from
\cref{tiling} it follows that the coding radius is at most
$(\tfrac32+4) r_{2D(m+1)-1}$.  (The $4$ comes from the
construction of $V_j$ from $W_j$).  On the other hand, the
events $(E_m)_{m\geq 1}$ are independent, and their
probabilities are bounded below by some $p=p(d)>0$. Thus
$\P(R> \tfrac{11}2\times 13^{2D(m+1)})\leq (1-p)^m$, giving
the required power law bound.
\end{proof}

\section{Second moment bound}

In this section we prove \cref{power} (ii), which will
follow from a lower bound on spatial correlations that
holds for any stationary $3$-coloring of~$\Z^2$. The key to
the proof is that there is a height function associated to
the $3$-colorings.  If correlations were to decay too fast
then the height changes around a large contour would not
cancel.

We need the following simple lemma, the proof of which is deferred to the end
of the section.  A process $Z$ on $\Z$ is called {\df right-tail-trivial} if
every event in $\mathcal{T}_+\colonequals
\bigcap_{n\in\Z}\,\sigma(Z_n,Z_{n+1},\ldots)$ has probability zero or one.

\begin{lemma}\label{variance}
If $(Z_i)_{i\in \Z}$ is a $\pm 1$-valued stationary
right-tail-trivial process,  either it is a.s.\
deterministic or $\limsup_{n\to\infty}\Var\sum_{i=1}^n Z_i
= \infty$.
\end{lemma}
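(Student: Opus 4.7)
The plan is proof by contradiction. Suppose $M := \sup_n \Var S_n < \infty$ (equivalent to $\limsup < \infty$, since $\Var S_n$ is always finite); I will show that $Z$ must be a.s.\ constant. Set $\mu := E Z_0$ and $\tilde S_n := S_n - n\mu$, and write $\F_n := \sigma(Z_n, Z_{n+1}, \ldots)$, so $\mathcal{T}_+ = \bigcap_n \F_n$. If $|\mu|=1$ then $Z_i \equiv \mu$ a.s.\ and we are done, so assume $|\mu|<1$ throughout.

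First I would construct a stationary $L^2$ coboundary potential. Since $\|\tilde S_n\|_2^2 = \Var S_n \le M$, weak $L^2$-compactness yields a subsequence $(n_k)$ along which $\tilde S_{n_k}$ converges weakly to some $Y_0 \in L^2$. Each $\tilde S_n$ lies in the weakly closed subspace $L^2(\F_1)$, so $Y_0 \in \F_1$. Right-tail-triviality implies mixing, via reverse martingale convergence $E[\,\cdot\,|\F_n] \to E[\,\cdot\,]$ applied to bounded random variables (approximating a general $L^2$ function by cylinder functions). In particular $Z_n \to \mu$ weakly in $L^2$, so the differences $\tilde S_{n_k+n} - \tilde S_{n_k} = \sum_{j=1}^n (Z_{n_k+j} - \mu)$ tend weakly to $0$ for each fixed $n$, and hence $\tilde S_{n_k+n} \to Y_0$ weakly as well. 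Setting $Y_n := Y_0 \circ T^n$ with $T$ the unit shift, this lets me identify $Y_n$ as the weak limit of $\tilde S_{n_k+n} - \tilde S_n$, yielding the coboundary identity $Y_n = Y_0 - \tilde S_n$ a.s.; by shifting, $Y_n \in \F_{n+1}$.

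The crucial step is a mod-$2$ observation exploiting $Z_i \in \{\pm 1\}$. Since $S_n$ is a sum of $n$ signs, $S_n \equiv n \pmod 2$ as integers, so $\tilde S_n \equiv n(1-\mu) \pmod{2\Z}$ in $\R/2\Z$. Combined with $Y_0 = Y_n + \tilde S_n$, this gives $Y_0 \equiv Y_n + n(1-\mu) \pmod{2\Z}$; since $Y_n \in \F_{n+1}$ and $n(1-\mu)$ is deterministic, $Y_0 \bmod 2$ is $\F_{n+1}$-measurable for every $n \ge 1$, hence lies in $\mathcal T_+$. Right-tail-triviality then forces $Y_0 \bmod 2 = u_0$ a.s.\ for some constant $u_0 \in \R/2\Z$.

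To finish: by stationarity of $Y$, $Y_n \bmod 2$ has the same Dirac distribution as $Y_0 \bmod 2$, so $Y_n \bmod 2 = u_0$ a.s.\ for every $n$; comparing with $Y_n \bmod 2 = u_0 - n(1-\mu) \bmod 2$ forces $n(1-\mu) \in 2\Z$ for every $n \ge 1$. Taking $n=1$ yields $1-\mu \in 2\Z$, so $\mu \in \{-1,+1\}$, contradicting $|\mu|<1$. I expect the main technical obstacle to be the coboundary identity $Y_n = Y_0 - \tilde S_n$, where the mixing consequence of right-tail-triviality has to be invoked carefully to pin down the weak subsequential limit; after that, everything reduces to clean algebra in $\R/2\Z$.
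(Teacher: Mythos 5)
Your proof is correct, and the overall skeleton matches the paper's: produce a stationary sequence $(Y_n)$ with $Y_n\in L^2(\mathcal F_{n+1})$ satisfying the coboundary identity $Y_0-Y_n=\tilde S_n$, then observe that $\tilde S_n\equiv n(1-\mu)\pmod 2$ forces $Y_0\bmod 2$ into the right tail, invoke triviality to make it constant, and combine with stationarity to conclude $\mu\in\{\pm1\}$. Where you genuinely diverge from the paper is the construction of the coboundary. The paper defines $\phi_j(X):=\limsup_n\E(X+S_j^n)^2$ on $L^2(\mathcal F_j)$, proves it is finite, continuous, coercive and strictly convex, extracts the unique minimizer $X_j$, and deduces $X_j+S_j^k=X_k$ from the shift relation $\phi_k(X+S_j^k)=\phi_j(X)$. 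You instead invoke Banach--Alaoglu to get a weak subsequential limit $Y_0$ of the norm-bounded sequence $\tilde S_n$ (landing in the weakly closed subspace $L^2(\mathcal F_1)$), and then use the reverse-martingale consequence of right-tail-triviality, $\E[W\,|\,\mathcal F_n]\to\E[W]$, to show $Z_n\rightharpoonup\mu$, hence $\tilde S_{n_k+m}\rightharpoonup Y_0$, and identify $Y_0\circ T^m$ with $Y_0-\tilde S_m$ via weak continuity of the unitary shift. Both routes are sound; yours is the more standard ``bounded variance $\Rightarrow$ coboundary'' pattern and is arguably more conceptual, while the paper's convex-minimization argument avoids passing to a subsequence and does not rely on reverse-martingale machinery. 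One small inexactness in your write-up: the parenthetical about approximating by cylinder functions is unnecessary, since Lévy's downward theorem applies directly to any $W\in L^1$ (and for bounded $Z_n$ the $L^1$ convergence $\E[W|\mathcal F_n]\to\E W$ already yields $\E[Z_nW]\to\mu\E W$).
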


Let $X$ be a $3$-coloring of~$\Z^2$.  We will prove a lower
bound on spatial correlations involving pairs of edges. Let
$u,v\in\Z^2$ be neighboring vertices. Since $X$ is a
coloring, $X_v-X_u\equiv \pm1 \pmod 3$. Therefore define
$h(u,v)\in\{-1,+1\}$ by $h(u,v)\equiv X_v-X_u \pmod 3$. Now
define
\begin{multline*}
\rho(r)\colonequals \sup\Bigl\{\Cov\bigl[h(u_1,v_1),h(u_2,v_2)\bigr] : \\
\;\|u_1-v_1\|_1=\|u_2-v_2\|_1=1,\;\;\|u_1+v_1-u_2-v_2\|_1\ge 2r\Bigr\},
\end{multline*}
Note that $\rho$ is nonnegative (since interchanging $u_1$
and $v_1$ reverses the sign of the covariance), and
non-increasing.

\begin{proposition}[Correlations]\label{correlations}
Let $X$ be a stationary $3$-coloring of~$\Z^2$, and suppose
that its restriction $(X_{(i,0)})_{i\in\Z}$ to the axis is
right-tail-trivial.  Then with $\rho$ defined as above,
\[\sum_{r=1}^\infty r \rho(r)=\infty\,.\]
\end{proposition}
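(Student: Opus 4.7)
The plan is to exploit the height function for $3$-colorings. Since each $h(u,v)\in\{\pm 1\}$ and the signed sum $h(v_0,v_1)+h(v_1,v_2)+h(v_2,v_3)+h(v_3,v_0)$ around any unit square $v_0,v_1,v_2,v_3$ is an integer in $[-4,4]$ that is $\equiv 0\pmod 3$, it must equal $0$; hence the sum around any closed loop vanishes, and there is a well-defined (up to an additive constant) height function $\phi\colon\Z^2\to\Z$ with $\phi(v)-\phi(u)=h(u,v)$ and $|\phi(v)-\phi(u)|\le\|v-u\|_1$. Writing $H_N^{(j)}:=\sum_{i=0}^{N-1}h((i,j),(i+1,j))$ and $V_k^{(i)}:=\sum_{j=0}^{k-1}h((i,j),(i,j+1))$, the boundary of the rectangle $[0,N]\times[0,k]$ gives
\[H_N^{(0)}-H_N^{(k)}\;=\;V_k^{(0)}-V_k^{(N)},\]
and in particular $|H_N^{(0)}-H_N^{(k)}|\le 2k$ almost surely.

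Next I would apply \cref{variance} to the stationary $\pm 1$-valued process $Z_i:=h((i,0),(i+1,0))$, which is a function of the axis coloring and hence right-tail-trivial. If $Z$ were a.s.\ deterministic, stationarity would force $Z_i\equiv\varepsilon$ for some $\varepsilon\in\{\pm 1\}$, making $(X_{(i,0)})_i$ an arithmetic progression modulo $3$ whose phase $X_{(0,0)}$ lies in every tail $\sigma$-field, contradicting the hypothesis. Hence \cref{variance} gives $\limsup_N V_N=\infty$, where $V_N:=\Var(H_N^{(0)})$.

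Now assume for contradiction that $\sum_r r\rho(r)<\infty$. Setting $c(d):=\Cov(Z_0,Z_d)$, the nonnegativity of $\rho$ and its definition yield $|c(d)|\le\rho(d)$, so both $\sum_d|c(d)|$ and $\sum_d|d||c(d)|$ are finite. A direct expansion gives
\[V_N\;=\;\sum_{|d|<N}(N-|d|)\,c(d)\;=\;N\sigma^2-A+o(1),\]
with $\sigma^2:=\sum_{d\in\Z}c(d)$, $A:=\sum_d|d|c(d)$, and error term controlled by $\sum_{|d|\ge N}|d||c(d)|\to 0$. Separately, from $|H_N^{(0)}-H_N^{(k)}|\le 2k$ we get $\Cov(H_N^{(0)},H_N^{(k)})\ge V_N-2k^2$, while summing the pointwise $\rho$-bound $|\Cov(h((i,0),(i+1,0)),h((j,k),(j+1,k)))|\le\rho(|i-j|+k)$ over the $N^2$ pairs of horizontal edges yields $\Cov(H_N^{(0)},H_N^{(k)})\le 2N\sum_{d\ge k}\rho(d)$. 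Combining,
\[V_N\;\le\;2k^2+2N\sum_{d\ge k}\rho(d)\qquad\text{for every }k\ge 1.\]

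Taking $k=k(N)\to\infty$ with $k^2=o(N)$ (e.g.\ $k=\lceil\log N\rceil$, noting $\sum_{d\ge k}\rho(d)\to 0$), the above inequality gives $V_N=o(N)$. Combined with $V_N=N\sigma^2-A+o(1)$, this forces $\sigma^2=0$, so $V_N\to-A$ is bounded, contradicting $\limsup_N V_N=\infty$. The main subtlety is that the height-function identity is the bridge between the one-dimensional variance expansion (which gives $V_N=N\sigma^2+O(1)$ precisely when $\sum_d|d||c(d)|<\infty$) and the two-dimensional covariance decay encoded by $\rho$: the rectangle argument upgrades the a priori bound $V_N=O(N)$ to $V_N=o(N)$, which in turn forces the asymptotic variance $\sigma^2$ to vanish.
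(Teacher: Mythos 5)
Your proof is correct, and it shares the two load-bearing ideas with the paper's argument: the height-function identity (so that $h(v_0,v_n)$ can be computed along different paths), and an application of \cref{variance} to $Z_i\colonequals h((i,0),(i+1,0))$, with the observation that $Z$ deterministic would force the axis coloring to be $3$-periodic and hence not right-tail-trivial. Where you differ is in how the boundedness of $V_N\colonequals\Var(H_N^{(0)})$ is extracted from $\sum_r r\rho(r)<\infty$. The paper does this in one step by writing the same quantity $h(v_0,v_n)$ both as a sum along the bottom edge and as a sum around the other three sides of the square $[0,n]^2$, and then reading off $\Var h(v_0,v_n)$ as the \emph{cross}-covariance of the two representations; because the two paths are close near the corners $v_0,v_n$ and far apart elsewhere, the resulting double sum is directly bounded by a constant multiple of $\sum_{r}r\rho(r)$. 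You instead split the work: a rectangle identity gives the almost-sure bound $|H_N^{(0)}-H_N^{(k)}|\le 2k$, from which $\Cov(H_N^{(0)},H_N^{(k)})\ge V_N-2k^2$; the decay of $\rho$ bounds $\Cov(H_N^{(0)},H_N^{(k)})$ above by $2N\sum_{d\ge k}\rho(d)$; choosing $k=k(N)$ appropriately yields $V_N=o(N)$; and a separate exact variance expansion $V_N=N\sigma^2-A+o(1)$ (valid since $\sum_d|d|\,|c(d)|<\infty$) then forces $\sigma^2=0$, hence $V_N$ bounded. This is a harmonic-function-style sublinearity argument and is noticeably longer, but it makes explicit the role of the asymptotic variance $\sigma^2$ and generalizes more transparently to situations where only one-sided or averaged covariance information is available; the paper's cross-covariance trick is shorter and gives an explicit quantitative bound $V_N\le 4C$ in one line.
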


The key point is that the function $h$ defined above can be
interpreted as the difference along an edge of a height
function.  (See, e.g.\ \cite{baxter},\cite{galvin} for
background.)  Indeed, suppose $w_0,\ldots,w_3$ are the
vertices of a unit square of~$\Z^2$ in counterclockwise
order, and write $w_4=w_0$. Then $\sum_{j=0}^3
h(w_{j},w_{j+1})=0$ (since the sum lies in
$\{0,\pm2,\pm4\}$ but equals $0$ modulo $3$). Therefore,
for arbitrary vertices $u,v\in\Z^2$ we can define
$h(u,v)\colonequals \sum_{j=0}^{m-1} h(w_j,w_{j+1})$ where
$u=w_0,w_1,\ldots,w_m=v$ is any path from $u$ to $v$; it
follows from the above observation that this sum does not
depend on the choice of path.

\begin{proof}[Proof of Proposition~\ref{correlations}]
Let $X$ be a $3$-coloring with the given properties, and suppose for a
contradiction that $\sum_{r=1}^\infty r \rho(r)=C<\infty$.

Write $v_j\colonequals (j,0)$, and let $n\geq 1$.  We will bound the
variance of~$h(v_0,v_n)$ by expressing it in two different
ways. Summing along the axis gives:
\[
h(v_0,v_n) =
\sum_{j=1}^n h(v_{j-1},v_j)\,,
\]
while by summing around three sides of a square:
\begin{align*}
h(v_0,v_n) = &\sum_{j=1}^n  h\bigl((0,j-1),(0,j)\bigr)+ \sum_{j=1}^n
h\bigl((j-1,n),(j,n)\bigr)\\[-3pt]&
+ \sum_{j=1}^n h\bigl((n,n-j+1),(n,n-j)\bigr).
\end{align*}
Thus, we may compute $\Var
h(v_0,v_n)=\Cov[h(v_0,v_n),h(v_0,v_n)]$ as the covariance
of the two representations. This gives
\begin{equation}\label{var-bound}
\Var h(v_0,v_n) \leq 2\sum_{r=1}^{2n} r\rho(r) +
\sum_{r=n}^{2n} 2n\rho(r)
  \leq 4C.
\end{equation}

Now let $Z_i\colonequals h(v_{i},v_{i+1})$.  The assumption on the
coloring $X$ implies that the process $Z=(Z_i)_{i\in\Z}$ is
right-tail-trivial, so using \eqref{var-bound} and applying
Lemma~\ref{variance} shows that $Z$ is deterministic, which
is to say that either $Z_i=1$ for all $i$ a.s.\ or $Z_i=
-1$ for all $i$ a.s.  Without loss of generality consider
the former case. Then the coloring $(X_{(i,0)})_{i\in\Z}$
restricted to the axis is supported on the set of three
$3$-periodic colorings of the form $\cdots123123\cdots$.
Stationarity implies that these three colorings must each
have probability $1/3$, but the resulting process is not
right-tail-trivial.
\end{proof}

\begin{proof}[Proof of \cref{power} (ii)]
Let $X$ be an ffiid $3$-coloring of~$\Z^d$ with $d\geq 2$, and suppose for a
contradiction that the coding radius satisfies $\E R^2<\infty$.  Similarly to
the proof of \cref{tower} (ii), we may assume without loss of generality that
$d=2$, since restricting an ffiid process to the plane
$\Z^2\times\{0\}^{d-2}$ gives another ffiid process, and does not increase
the coding radius.  We claim also that $X$ restricted to $\Z\times\{0\}$ is
right-tail-trivial as required for Proposition~\ref{correlations}.  This
is a consequence of a fact from ergodic theory: any process that is a factor
of an iid process and takes values in $A^\Z$, where $A$ is a finite set, is
right-tail-trivial; see e.g.\ \cite[Thm.~1 on p.~283, Ex.~1 on p.~280, and
Def.~3 on p.~181]{cornfeld}.  Alternatively, an elementary argument shows
that an ffiid process with $\E R^2<\infty$ satisfies a stronger
tail-triviality condition; we explain this at the end of the section ---
specifically we use \cref{tail-trivial} with $d=2$.

We now bound $\rho(r)$.  Let $\|u_1+v_1-u_2-v_2\|_1\geq 2r$
and $\|u_i-v_i\|_1=1$ for $i=1,2$.  Write $H_i\colonequals
h(u_i,v_i)-\E h(u_i,v_i)$, so that $\E H_i=0$ and
$|H_i|\leq 2$.  Recall that $R_v$ denotes the coding radius
at vertex~$v$, and define the event $E_i\colonequals
\{R_{u_i}\vee R_{v_i}> r/2-1\}$. Thus, the random variables
$H_1 \ind_{\overline E_1}$ and $H_2\ind_{\overline E_2}$
are independent, since they are functions of disjoint sets
of iid variables. Writing $\eps=\eps(r)\colonequals \P(R>
r/2-1)$, note that $\P (E_i) \leq 2\eps$, and also $\E(H_i
\ind_{\overline E_i})= -\E(H_i\ind_{E_i})\leq 4\eps$.
Therefore,
\begin{align*}
\E (H_1H_2) &=
\E\bigl(H_1 H_2 \ind_{E_1\cup E_2}\bigr)
+\E\bigl(H_1 H_2 \ind_{\overline E_1} \ind_{ \overline E_2}\bigr) \\
&\leq 4\,\P(E_1\cup E_2) + \E(H_1 \ind_{\overline E_1})
\,\E(H_2 \ind_{\overline E_2}) \\
&\leq 16\eps +16\eps^2\leq 32\eps,
\end{align*}
and thus $\rho(r)\leq 32\,\P(R> r/2-1)$.
Proposition~\ref{correlations} gives $\sum_{r} r \rho(r)
=\infty$, so $\sum_r r\,\P(R>r/2-1)=\infty$, which implies
$\E R^2=\infty$.
\end{proof}

We conclude the section by giving the proof of
Lemma~\ref{variance}, and also the elementary argument for
tail-triviality mentioned above.

\begin{proof}[Proof of Lemma~\ref{variance}]
Let $(Z_i)_{i\in\Z}$ be stationary and $\pm 1$-valued, and
suppose $\Var \sum_{i=1}^n Z_i\leq C^2$ for all $n$.  We
will deduce that $Z$ is deterministic.

Let $\mathcal{F}_j$ be the $\sigma$-field generated by
$Z_j,Z_{j+1},\ldots$, and consider the space of random
variables $L^2(\mathcal{F}_j)$, with the norm $\|X\|_2\colonequals
(\E X^2)^{1/2}$. Write $\mu\colonequals \E Z_0$ and $S_j^k\colonequals \sum_{i=j}^{k-1}
(Z_i-\mu)$, so that $\|S_j^k\|_2^2\leq C^2$ and in
particular $S_j^k\in L^2(\mathcal{F}_j)$. Now define
$\phi_j: L^2(\mathcal{F}_j)\to [0,\infty)$ by
\[
\phi_j(X)\colonequals \limsup_{n\to\infty} \E{\bigl( X+S_j^n\bigr)^2}\,.
\]
We will prove that $\phi_j$ has a unique global minimizer
in $L^2(\mathcal{F}_j)$.

First note that by the Cauchy-Schwarz inequality and the
uniform bound on $\|S_j^k\|_2^2$, the function $\phi_j$
satisfies the bounds
\[
\|X\|_2^2-2C\,\|X\|_2\leq \phi_j(X)\leq \|X\|_2^2+2C\,\|X\|_2+C^2\,,
\]
and therefore $\phi_j(X)<\infty$ for all $X\in
L^2(\mathcal{F}_j)$, while $\phi_j(X)\to\infty$ as
$\|X\|_2\to\infty$.

We next claim that $\phi_j$ is continuous and strictly
convex. To check continuity, let $X,Y\in
L^2(\mathcal{F}_j)$ satisfy $\|X-Y\|_2=\eps$.  Writing
$Y+S_j^n=(X+S_j^n)+(Y-X)$ and using Cauchy-Schwarz again,
\begin{equation}\label{cont-bound}
\phi_j(Y)\leq \phi_j(X) +
2\eps \,\phi_j(X)^{1/2} +\eps^2.
\end{equation}
Applying \eqref{cont-bound} in both directions, using
\eqref{cont-bound} again to bound $\phi_j(Y)$ in terms
of~$\phi_j(X)$ on the right side, and simplifying, we
obtain
\[\bigl|\phi_j(Y)-\phi_j(X)\bigr|\leq 2\eps\,
\phi_j(X)^{1/2}+3\eps^2\,,\] from which continuity follows.

To check that $\phi_j$ is strictly convex, observe that for
$X,Y\in L^2(\mathcal{F}_j)$,
\[
\begin{aligned}
\phi_j\Bigl(\frac{X+Y}2\Bigr)
&
=
\limsup_{n\to\infty} \E\left[\left(\frac{ (X+S_j^n)+(Y+S_j^n)}2\right)^2\right]
\\&
=
\limsup_{n\to\infty} \E\left[\frac{ (X+S_j^n)^2+(Y+S_j^n)^2}2 -\frac{(X-Y)^2}4\right]
\\&
\le \frac{\phi_j(X)+\phi_j(Y)}2 - \frac{\|X-Y\|_2^2}4\,.
\end{aligned}
\]

It now follows (see e.g.\ \cite[Theorem 2.11, Remarks 2.12,
2.13]{Barbu-Precupanu}) that $\phi_j$ has a unique
mimimizer. Let $X_j\in L^2(\mathcal{F}_j)$ minimize
$\phi_j$. For $j<k<n$ we have $S_j^n=S_j^k+S_k^n$, and
hence $\phi_k(X+S_j^k)=\phi_j(X)$. Therefore,
\[
X_j+S_j^k=X_k\,.
\]
By construction, $(X_i)_{i\in\Z}$ is stationary. Since
$X_j=X_{j+1}-S_j^{j+1}=X_{j+1}-Z_j+\mu$ we have
\begin{equation}\label{XX}
X_j\equiv X_{j+1}+1+\mu \mod 2.
\end{equation}
Thus, $X_j \mmod 2 \in L^2(\mathcal{F}_{j+1})$, and by
iterating we see that $X_j \mmod 2$ is in the right tail of
$(Z_i)_{i\in\Z}$. Therefore, $X_j \mmod 2$ is an a.s.\
constant for each $j$. Since $(X_i)_{i\in\Z}$ is
stationary, we have $X_0\equiv X_1 \mmod 2$ a.s.  Hence
\eqref{XX} gives $\mu\equiv 1\mmod 2$, i.e.\
$\mu\in\{-1,+1\}$, so $Z_0$ is a.s.\ deterministic.
\end{proof}

A process $X$ on $\Z^d$ is called {\df fully tail-trivial}
if every event in $\mathcal{T}(X)\colonequals
\bigcap_{r\geq 0}\,\sigma(X_v:v\not\in B(r))$ has
probability zero or one. Of course, the restriction of a
fully tail-trivial process to the axis is also fully
tail-trivial, and therefore right-tail-trivial.  Hence, the
following lemma suffices for our needs in the proof of
\cref{power} (ii) above.

\begin{lemma} \label{tail-trivial}
Suppose $X$ is an ffiid process on $\Z^d$.  If the coding
radius~$R$ satisfies $\E R^d<\infty$ then $X$ is fully
tail-trivial.
\end{lemma}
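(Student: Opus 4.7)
The strategy is to reduce full tail-triviality of $X$ to Kolmogorov's $0$-$1$ law for the underlying iid process $Y$, where $X = F(Y)$. Fix $A \in \mathcal{T}(X)$ and an arbitrary $s \ge 0$; the plan is to show $A \in \sigma(Y_v : v \notin B(s))$ modulo $\P$-null sets. Applied for every $s$, this places $A$ in $\bigcap_s \sigma(Y_v : v \notin B(s))$, the tail $\sigma$-algebra of the iid field $Y$, which is $\P$-trivial by Kolmogorov.

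First I will exploit the moment hypothesis through the event
\[
\mathcal{E}_r \colonequals \{R_v < |v| - s \text{ for all } v \in \Z^d \text{ with } |v| > r\},
\]
on which no coding ball based outside $B(r)$ reaches $B(s)$. A union bound together with $R_v \eqd R$ gives
\[
\P(\mathcal{E}_r^c) \leq \sum_{|v|>r} \P(R \geq |v|-s) \leq c \sum_{n > r} n^{d-1} \P(R \geq n - s),
\]
and the standard equivalence $\E R^d < \infty \iff \sum_n n^{d-1} \P(R \geq n) < \infty$ shows the right side tends to $0$ as $r \to \infty$; hence $\bigcup_r \mathcal{E}_r$ has full measure.

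The rest is a resampling coupling. Let $Y'$ be an independent copy of $Y$ and define $Y''$ by $Y''_v = Y_v$ for $v \notin B(s)$ and $Y''_v = Y'_v$ for $v \in B(s)$, so that $Y'' \eqd Y$. On $\mathcal{E}_r$, every $v$ with $|v| > r$ satisfies $B(v, R_v(Y)) \subseteq B(s)^c$, hence $Y$ and $Y''$ agree on this ball, and the pointwise defining property of the coding radius forces $X_v(Y) = X_v(Y'')$. Since $A \in \sigma(X_v : v \notin B(r))$ for every $r$, this yields $\ind_A \circ X(Y) = \ind_A \circ X(Y'')$ on $\mathcal{E}_r$, and letting $r \to \infty$ extends the equality to $\P$-a.s. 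The right-hand side depends only on $Y|_{B(s)^c}$ and the independent $Y'|_{B(s)}$, so a Fubini/independence argument forces $\ind_A$ to be $\sigma(Y|_{B(s)^c})$-measurable modulo null sets, completing the reduction. The main subtlety is precisely this coupling step: one must invoke the pointwise definition of the coding radius (agreement of $Y$ and $Y''$ on $B(v, R_v(Y))$ is enough to conclude $X_v(Y) = X_v(Y'')$) rather than attempt to track how $R_v$ itself changes when $Y$ is modified on $B(s)$.
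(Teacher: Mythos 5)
Your proof is correct, and it rests on the same underlying geometric fact as the paper's argument --- with high probability, the coding balls based far from the origin do not reach a fixed window $B(s)$ --- but the deduction is organized along a genuinely different route. The paper defines the event $E_{n,N}$ that some $u\in B(n)$ falls in the coding ball of some $v\notin B(N)$, controls $\P(E_{n,N})$ via the mass-transport identity $\sum_v\P(0\hookrightarrow v)=\E|B(R)|<\infty$, and approximates the tail event $A$ simultaneously by a cylinder event $A'\in\sigma(Y_v:v\in B(n))$ and by $A''=A\setminus E_{n,N}\in\sigma(Y_v:v\notin B(n))$; since $A'$ and $A''$ are independent and both $\eps$-approximate $A$, one concludes $\P(A)=\P(A)^2$. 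You instead resample the coordinates of $Y$ inside $B(s)$, use the pointwise definition of the coding radius to show $\ind_A$ is almost surely invariant under the resampling, and then a Fubini/independence argument identifies $\ind_A$, modulo null sets, with a $\sigma(Y_v:v\notin B(s))$-measurable function, so Kolmogorov's $0$--$1$ law finishes the job. A small advantage of your route is that the paper's assertion $A''\in\sigma(Y_v:v\notin B(n))$ tacitly relies on the ``stopping-radius'' property that $\{R_v\le r\}$ is determined by $Y$ on $B(v,r)$ (easy, but it requires a moment's thought and is not spelled out), whereas the coupling sidesteps this: you never need $\mathcal{E}_r$ itself to be measurable with respect to $\sigma(Y_v:v\notin B(s))$, only that on $\mathcal{E}_r$ the indicator is insensitive to the resampled coordinates. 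Your direct union bound $\P(\mathcal{E}_r^c)\le\sum_{|v|>r}\P(R\ge|v|-s)$ and the paper's mass-transport computation are interchangeable ways to deploy the hypothesis $\E R^d<\infty$, and both are tight for this purpose.
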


\begin{proof}
Let $X$ be a finitary factor of the iid process $Y$ with
coding radius satisfying $\E R^d<\infty$. For $u,v\in\Z^d$
we write $u\hookrightarrow v$ for the event $\{|u-v|\leq
R_v\}$, i.e.\ the event that $u$ is within the ball that
must be examined to determine $X_v$.

For positive integers $n<N$, define
\[E_{n,N}\colonequals \bigl\{\exists\; u\in B(n)
\text{ and }v\notin B(N)\text{ s.t.\ }u\hookrightarrow
v\bigr\}\,.\]
We claim that for any $n$ we have $\P(E_{n,N})\to 0$ as
$N\to\infty$. Indeed, by translation-invar\-iance and the
assumption on $R$,
\[\sum_{v\in\Z^d} \P(0\hookrightarrow v)=\sum_{v\in\Z^d} \P(-v\hookrightarrow 0)
=\sum_{v\in\Z^d} \P(v\hookrightarrow 0)=\E |B(R)|<\infty\,.\] (This is an
instance of the ``mass-transport principle'' --- see
\cite{blps} for background.) Hence for any $n$,
\[\sum_{\substack{u\in B(n) ,\; v\in\Z^d}} \P(u\hookrightarrow v)<\infty\,,\]
and thus
\[\P(E_{n,N})\leq \sum_{\substack{u\in B(n) ,\; v\notin B(N)}} \P(u\hookrightarrow v)\to
0\quad\text{as }N\to\infty\,,\] as claimed.

Now, fix $\eps>0$ and a tail event $A\in\mathcal{T}(X)$.
Since $X$ is a function of~$Y$ we have $A\in\sigma(Y)$, so
we can find an approximating cylinder event: there exist
$n$ and $A'\in\sigma(Y_v:v\in B(n))$ such that
$\P(A\triangle A')<\eps$. Let $A''=A\setminus E_{n,N}$. By
the above claim, for $N$ large enough we have
$\P(A\triangle A'')<\eps$.  On the other hand, since $A$ is
a tail event, $A\in\sigma(X_v:v\not\in B(N))$, and so by
the definition of~$E_{n,N}$ we deduce
$A''\in\sigma(Y_v:v\notin B(n))$. Thus $A''$ and $A'$ are
independent.  Since $\eps$ was arbitrary this implies that
$A$ is independent of itself, i.e.\ $\P(A)\in\{0,1\}$.
\end{proof}

\section{Finitely dependent coloring}

In this section we prove two results on $k$-dependent
coloring.  See \cite{hl} for more on this topic.

\begin{proof}[Proof of \cref{fin-dep}]\sloppypar
Suppose $(X_i)_{i\in\Z}$ is a translation-invariant
$1$-dependent coloring.  Let $\mathcal{F}_i\colonequals
\sigma(\ldots,X_{i-1},X_i)$, and define the random variable
\[Y_i\colonequals \P(X_{i+1}=1\mid \F_i)\,.\]
Let $m=\esssup Y_1;$ then since $X_2\neq 1$ on the event $X_1=1$ we have a.s.
\[Y_1 \leq m\, \ind[X_1\neq 1]\,.\]
Since $X_2$ is independent of~$\mathcal{F}_0$, we deduce
\begin{multline*}
\P(X_2=1)=\P(X_2=1 \mid\mathcal{F}_0)= \E(Y_1\mid
\mathcal{F}_0)\\ \leq m\, \P(X_1\neq 1\mid \mathcal{F}_0)=m\,(1-Y_0)\,,
\end{multline*}
 and since this holds a.s.\ we deduce
\[\P(X_2=1)\leq m\,(1-m)\leq 1/4\,.\]
Similarly we have $\P(X_2=k)\leq 1/4$ for each color
$k=1,\ldots,q$, so $q\geq 4$.
\end{proof}

Finally, we note the following consequence of the results
of the previous section.
\begin{corollary}\label{kdep-zd}
Let $d\geq 2$ and $k\geq 1$.  There exists no stationary $k$-dependent
$3$-coloring of~$\Z^d$.
\end{corollary}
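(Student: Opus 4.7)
The plan is to adapt the argument for \cref{power} (ii) and derive a contradiction from \cref{correlations}. Specifically, I will show that $k$-dependence forces the edge-correlation function $\rho$ from the previous section to vanish for all large enough $r$, which directly contradicts the conclusion $\sum_r r\rho(r)=\infty$.

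First I would reduce to $d=2$ by restricting $X$ to a coordinate plane $\Z^2\times\{0\}^{d-2}$; the restriction is still a stationary $k$-dependent $3$-coloring, exactly as in the analogous reduction in the proof of \cref{power} (ii). To apply \cref{correlations} to the resulting planar coloring, I next need to verify that its restriction $(X_{(i,0)})_{i\in\Z}$ to the axis is right-tail-trivial. This should be a standard fact about $k$-dependent stationary processes on $\Z$: a tail event $A\in\mathcal{T}_+$ is, for every $N$, measurable with respect to $\sigma(X_{(n,0)}:n\geq N)$, and hence by $k$-dependence is independent of every event in $\sigma(X_{(n,0)}:n\leq N-k-1)$. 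As $N\to\infty$ these $\sigma$-fields generate $\sigma(X_{(n,0)}:n\in\Z)\supseteq\sigma(A)$, so $A$ is independent of itself and $\P(A)\in\{0,1\}$.

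The main computation is that $\rho(r)=0$ for $r$ larger than a constant depending on $k$. Given edges $\langle u_i,v_i\rangle$ with $\|u_i-v_i\|_1=1$ and $\|u_1+v_1-u_2-v_2\|_1\geq 2r$, a short triangle-inequality check (using $(u_1-u_2)+(v_1-v_2)=(u_1+v_1)-(u_2+v_2)$ and similarly with $u_2,v_2$ swapped, combined with $\|u_i-v_i\|_1=1$) shows that each of $\|u_1-u_2\|_1$, $\|u_1-v_2\|_1$, $\|v_1-u_2\|_1$, $\|v_1-v_2\|_1$ is at least $r-2$. Hence for $r>k+2$ the two pairs of endpoints are at $\ell_1$-distance greater than $k$, so $h(u_1,v_1)$ and $h(u_2,v_2)$ are independent by $k$-dependence, and their covariance is zero.

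Combining these, $\sum_{r\geq 1} r\rho(r)$ is a finite sum of bounded terms, contradicting \cref{correlations}. I do not expect any serious obstacle: the planar restriction is immediate, right-tail-triviality is a routine one-line argument from $k$-dependence, and the correlation bound is an elementary pairwise-distance estimate. All the substantive work has been done in the proof of \cref{correlations}; the present corollary merely records that the much stronger finite-dependence hypothesis is a fortiori incompatible with its conclusion.
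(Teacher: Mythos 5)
Your proposal is correct and follows the paper's proof essentially verbatim: reduce to $d=2$ by restriction, invoke \cref{correlations}, and observe that $k$-dependence gives right-tail-triviality of the axis restriction and $\rho(r)=0$ for $r>k+2$. You merely spell out the two checks the paper calls ``elementary'' (the tail-triviality argument and the distance estimate giving $\rho(r)=0$), both of which are carried out correctly.
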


\begin{proof}
By restricting to a plane, it is enough to prove the $d=2$ case. We use
\cref{correlations}.  It is elementary to check that the restriction of a
$k$-dependent process to the axis is right-tail-trivial, and that the
correlation function $\rho(r)$ is zero for $r>k+2$.
\end{proof}
In contrast with \cref{kdep-zd}, in dimension $d=1$, a
stationary $2$-dependent $3$-coloring was constructed in
\cite{hl}.

\section*{Open Problems}

\begin{ilist}
\item What is the largest $\alpha$ for which there exists
    an ffiid $3$-coloring of~$\Z^d$ whose coding radius has
    finite $\alpha$-moment, for each $d\geq 2$?  (Our
    results show that it is at most $2$, and at least some
   small positive number.)
\item Does there exist, for some $d\geq 2$, a shift of
    finite type $S$ on $\Z^d$ that contains no constant
    configuration, but that admits some ffiid process $X$ with
    $X\in S$ a.s.\ whose
    coding radius tail decays strictly faster than a
    tower function?
\item Does there exist, for some $d\geq 2$, a shift of
    finite type $S$ that admits an ffiid $X$
    with all moments of the coding radius finite, but
    that admits no $X$ with tower function decay?  (For example, can
    the optimal tail decay be exponential?)
\item Does there exist, for some $d\geq 2$, a shift of
    finite type $S$ that admits an ffiid $X$, but
    such that all moments of the coding radius
    are infinite for every such $X$?
\end{ilist}
Our results imply negative answers to (ii),(iii),(iv) in dimension $d=1$.

\section*{Acknowledgements}
We thank Itai Benjamini, Jeff Steif, and Benjamin Weiss for valuable
discussions.  We thank an anonymous referee for helpful
suggestions.
 \enlargethispage*{1cm}

\newcommand{\MRhref}[2]{\href{http://www.ams.org/mathscinet-getitem?mr=#1}{MR#1}}
\def\@rst #1 #2other{#1}
\renewcommand\MR[1]{\relax\ifhmode\unskip\spacefactor3000 \space\fi
  \MRhref{\expandafter\@rst #1 other}{#1}}
\renewcommand{\arXiv}[1]{\href{http://arxiv.org/abs/#1}{arXiv:#1}}
\renewcommand{\arxiv}[1]{\href{http://arxiv.org/abs/#1}{#1}}

\bibliographystyle{hmralphaabbrv}
\bibliography{col}
\vspace{5mm}

\end{document}